\documentclass[12pt]{article}
\usepackage[backend=biber]{biblatex}
\usepackage{amsmath, amssymb, amsthm, esint, hyperref}
\usepackage{geometry, mathabx}
\newcommand{\bd}{\boldsymbol}

\usepackage{xcolor}
\usepackage{graphicx} 

\geometry{margin = 1.0in}

\def\R{{\mathbb R}}

\theoremstyle{plain}

\newtheorem{theorem}{Theorem}[section]

\newtheorem{proposition}{Proposition}[section]
\newtheorem{remark}{Remark}[section]
\newtheorem{lemma}{Lemma}[section]

\addbibresource{TumorGrowthBibliography.bib}
\begin{document}

\title{\sc{Existence and long-time behavior of global strong solutions to a nonlinear model of tumor growth}}
\author{Jeffrey Kuan and Konstantina Trivisa}
\maketitle

\begin{abstract}
    In this manuscript, we study a nonlinear model of tumor growth, described by a coupled hyperbolic-elliptic system of partial differential equations. In this model, the compressible flow of tumor cells is modeled by a transport equation for the cell density, which takes into account transport via a background flow (given by a potential solving a Brinkman-type equation), and which has a source term modeling cell growth and death. In this manuscript, we show that for sufficiently large viscosity, the tumor growth system admits nontrivial global strong solutions for positive initial data having a gradient with sufficiently small norm. This illustrates the regularizing effects of the source term representing tumor cell growth and death on the resulting transport dynamics of the equation. Furthermore, we characterize the long-time behavior of global strong solutions to the tumor growth system using a level-set analysis, in which we analyze how level sets evolve as they are transported by the flow, in terms of expansion/contraction and accretion/depletion of cells. While there has been past work on global existence of weak solutions for this tumor growth system \cite{WeberTrivisa, TrivisaWeberDrug}, this manuscript opens the study of well-posedness in terms of more regular strong/classical solutions, which exist globally in time.
\end{abstract}

\section{Description of the model}

In this manuscript, we consider a model of tumor growth, in which cells are transported by a background flow given by a Brinkman type potential, and in which tumor cell proliferation effects are driven by internal pressure within the tumor, where the internal pressure is given by a constitutive relationship involving the cell density. We consider a tumor region $\Omega \subset \R^{d}$ (where $d = 2, 3$) which is a bounded connected domain with smooth boundary $\partial \Omega$. We model the cell density $n(t, x): [0, T] \times \Omega \to [0, \infty)$ and potential $W(t, x): [0, T] \times \Omega \to \R$ by the equations:
\begin{equation*}
\begin{cases}
    \partial_{t}n - \text{div}(n\nabla W) = nG(p), \\
    -\mu \Delta W + W = p,
\end{cases}
\end{equation*}
where $G(p): [0, \infty) \to \R$ is a growth rate function and $p$ is the internal pressure of the tumor. We assume that $G(\cdot)$ is a $C^{1}$ decreasing function with a unique positive zero $p_{0}$ (referred to as the \textit{homeostatic pressure}), which is motivated by physically relevant considerations \cite{Tumor1, Tumor2}. The internal pressure $p(\rho)$ is specified as a function of $\rho$ via a constitutive relationship.

The first equation models the transport of tumor cells via a background flow, where the background velocity $\bd{u}$ is given in terms of the potential as $\bd{u} := -\nabla W$. It behaves as a continuity equation for the tumor, which resembles a compressible flow. The second equation for the potential $W$ is a momentum equation, which is an elliptic equation of Brinkman type. As is common in compressible flow applications \cite{FeireislCompressible, LionsBook}, we assume a power law constitutive relationship for the internal pressure, $p(n) = an^{\gamma}$ for some constant $a > 0$ and $\gamma \ge 1$. For concreteness, we also consider a specific form of the growth rate $G(n)$, given by
\begin{equation}\label{Gdef}
G(n) := \alpha - \beta n^{\gamma \theta},
\end{equation}
for positive constants $\theta > 0$ and $\alpha, \beta > 0$, as is done in \cite{WeberTrivisa}. Therefore, the resulting model under consideration is the following coupled elliptic-hyperbolic system:
\begin{equation}\label{equations}
\begin{cases}
    \partial_{t}n - \text{div}(n\nabla W) = \alpha n - \beta n^{\gamma \theta + 1} \\
    -\mu \Delta W + W = an^{\gamma}
\end{cases} \quad \text{ on } \Omega,
\end{equation}
for some adiabatic constant $\gamma \ge 1$, $\theta > 0$, and positive constants $\alpha, \beta, a > 0$. We prescribe Neumann boundary conditions for the potential $W$:
\begin{equation*}
\nabla W \cdot \bd{n}|_{\partial \Omega} = 0,
\end{equation*}
and we prescribe initial data:
\begin{equation*}
n(0, \cdot) = n_{0},
\end{equation*}
where $n_{0}$ is a smooth function in $C^{\infty}(\overline{\Omega})$ that is nonnegative so that $\displaystyle \min_{x \in \overline{\Omega}} n_{0}(x) := m \ge 0.$ 

\subsection{Summary of results}

The goal of this manuscript is to show that the equations \eqref{equations} admit nontrivial smooth global (classical) solutions. While past analysis for this equation has shown existence of global weak solutions for $L^{\infty}(\Omega)$ nonnegative initial data \cite{WeberTrivisa, TrivisaWeberDrug}, we have the alternative goal in this manuscript of showing that these equations \eqref{equations} admit unique global \textit{strong solutions} for initial data with sufficiently small gradient, and furthermore, analyzing the long-time behavior of global strong solutions, when they exist. One can verify that the equations \eqref{equations} admit trivial global smooth solutions where $n(t, x)$ is constant for each time $t \ge 0$, $n(t, x) := \overline{n}(t)$, for $\overline{n}(t)$ satisfying an ODE for arbitrary constant initial data $m \ge 0$:
\begin{equation*}
\frac{d\overline{n}}{dt} = \alpha \overline{n} - \beta \overline{n}^{1 + \gamma \theta} - a\mu^{-1}\overline{n}^{1 + \gamma}, \qquad \overline{n}(0) = m.
\end{equation*}
However, our goal in this manuscript is to show the existence of \textit{nontrivial} (non-constant) global smooth solutions.

In particular, we have the following global existence result of unique smooth solutions, for strictly positive initial data $n_{0} > 0$ with $\|\nabla n_{0}\|_{L^{6}(\Omega)}$ sufficiently small, under the assumption of sufficiently large viscosity parameter $\mu$ in the Brinkman equation.

\begin{theorem}\label{global}
Let $n_{0} > 0$ be a smooth nonnegative function on $\overline{\Omega}$ with $m := \min_{x \in \overline{\Omega}} n_{0}(x) > 0$ and $M := \max_{x \in \overline{\Omega}} n_{0}(x) > 0$. Then, there exists a constant $C(M, a, \gamma)$ such that if $\mu$ is sufficiently large so that it satisfies 
\begin{equation*}
C(M, a, \gamma)\mu^{-1} < \frac{\alpha}{8}\min(\gamma \theta, 1), 
\end{equation*}
then there exists $\delta$ depending on $m$, $M$, $a$, $\alpha$, $\gamma$, $\theta$, and $\mu$, so that there exists a unique global classical smooth solution $(n, W)$ to \eqref{equations} whenever $\|\nabla n_{0}\|_{L^{6}(\Omega)} < \delta$. Furthermore, if the initial data satisfies $0 < n(t, x) \le n^{*}$ where $n^{*}$ is the unique positive zero of $G(n) := \alpha - \beta n^{\gamma \theta} = 0$, then the solution satisfies the bound 
\begin{equation}\label{boundnstar}
0 \le n(t, x) \le n^{*}, \qquad \text{ if } 0 \le n_{0} \le n_{*},
\end{equation}
for all $t \in [0, T]$. Otherwise, the solution satisfies:
\begin{equation}\label{boundngeneral}
0 < n(t, x) \le n^{*} + (M - n^{*})e^{-rt},
\end{equation}
where $M := \max_{\overline{\Omega}} n_{0}$ and the constant $r > 0$ depends only on $\alpha$ and $\beta$ (and not on $M$).
\end{theorem}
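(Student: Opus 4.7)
I propose to prove Theorem \ref{global} in three stages: local well-posedness by a standard Picard iteration on the elliptic+transport system, pointwise bounds on $n$ via the maximum principle yielding estimates (\ref{boundnstar})--(\ref{boundngeneral}), and a closed $L^6$ gradient estimate that propagates the smallness of $\|\nabla n_0\|_{L^6}$ for all times under the large-viscosity hypothesis. The pointwise bounds together with the gradient estimate then feed a standard continuation argument to extend the local classical solution globally.

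\textbf{Pointwise bounds.} Expanding the transport equation as
\[
\partial_t n - \nabla W\cdot\nabla n \;=\; \mu^{-1}n(W - an^\gamma) + n(\alpha - \beta n^{\gamma\theta}),
\]
and evaluating at a spatial maximum of $n(t,\cdot)$, where $\nabla n$ vanishes, the elliptic maximum principle for the Brinkman equation $-\mu\Delta W + W = an^\gamma$ gives $\min an^\gamma \le W \le \max an^\gamma$; this makes the Brinkman contribution non-positive at the maximum, reducing the evolution of $\max n(t)$ to the logistic ODE bound $\tfrac{d}{dt}\max n \le \max n \cdot (\alpha - \beta(\max n)^{\gamma\theta})$. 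The identical argument at the minimum provides the corresponding lower ODE bound. Comparison against the scalar logistic equation $\dot y = y(\alpha - \beta y^{\gamma\theta})$, whose flow on $(0,\infty)$ is attracted to $n^* = (\alpha/\beta)^{1/(\gamma\theta)}$ with linearized rate $-\alpha\gamma\theta$, together with the convexity estimate $y(\alpha-\beta y^{\gamma\theta})\le -\alpha\gamma\theta(y-n^*)$ valid for $y\ge n^*$, yields both (\ref{boundnstar}) and (\ref{boundngeneral}). Crucially, the same argument gives a uniform strictly positive lower bound $n(t,x)\ge m_*:=\min(m,n^*)>0$ on any interval of existence.

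\textbf{Gradient estimate and continuation; main obstacle.} I would work with the log substitution $v:=\log n$, which satisfies $\partial_t v - \nabla W\cdot\nabla v = \Delta W + G(e^v)$. The purpose of this substitution is that differentiating, testing against $|\nabla v|^4\nabla v$, and integrating by parts in the transport term (the boundary contribution is killed by the Neumann condition on $W$) produces the energy identity
\[
\tfrac{1}{6}\tfrac{d}{dt}\|\nabla v\|_{L^6}^6 = -\tfrac{1}{6}\!\int\!\Delta W\,|\nabla v|^6 + \!\int\! D^2W(\nabla v,\nabla v)|\nabla v|^4 + \!\int\!\nabla\Delta W\cdot|\nabla v|^4\nabla v + \!\int\! nG'(n)|\nabla v|^6,
\]
in which the damping coefficient $nG'(n) = -\beta\gamma\theta n^{\gamma\theta}\le -\beta\gamma\theta m_*^{\gamma\theta}<0$ is sign-definite; a direct estimate on $\nabla n$ would instead produce $G + nG' = \alpha - \beta(1+\gamma\theta)n^{\gamma\theta}$, which is positive when $n$ is small and provides no usable damping. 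Using $\Delta W=\mu^{-1}(W-an^\gamma)$ and elliptic regularity in $W^{2,6}\hookrightarrow C^1$ (valid in $d\le 3$ by Morrey), the three non-damping terms are controlled by $C(M,a,\gamma)\mu^{-1}$ times polynomial functions of $\|\nabla n\|_{L^6}$. Under the theorem's smallness hypothesis $C(M,a,\gamma)\mu^{-1}<\tfrac{\alpha}{8}\min(\gamma\theta,1)$ the damping dominates, and a Gronwall-type bootstrap on $E:=\|\nabla v\|_{L^6}^6$ keeps $E(t)\le\delta^6$ for all $t\in[0,T^*)$ provided $E(0)<\delta^6$; the equivalence of $\|\nabla v\|_{L^6}$ and $\|\nabla n\|_{L^6}$ via $m_*\le n\le M_*$ transfers the smallness back to $\nabla n$. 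With $\|n\|_\infty$ and $\|\nabla n\|_{L^6}$ uniformly bounded, Morrey and Schauder theories then feed a standard higher-regularity bootstrap through the transport and elliptic equations, producing uniform $C^k$ bounds that forbid finite-time blow-up and give $T^*=\infty$. The main technical obstacle is the gradient estimate: matching the precise constant $\tfrac{\alpha}{8}\min(\gamma\theta,1)$ requires delicate bookkeeping of the $D^2W$ and $\nabla\Delta W$ cross terms against the $|nG'|$ damping, and the logarithmic change of variable is the essential structural device that makes the estimate close, with the dependence of $\delta$ on $m$, $M$, $a$, $\alpha$, $\gamma$, $\theta$, and $\mu$ arising precisely from this balance.
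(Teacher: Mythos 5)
Your proposal takes a genuinely different route at the two key junctures, and both differences introduce problems.

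\textbf{Local well-posedness via Picard iteration.} The paper does not use contraction; it uses Schauder fixed point (Propositions \ref{linearestimate}--\ref{localexistence}) precisely because the natural map for this quasilinear system loses a derivative. Concretely, the continuous-dependence estimate for the linearized transport equation (Proposition \ref{continuousdependence}) only controls $\|n - n_k\|_{H^m}$ after assuming $n \in L^\infty(0,T;H^{m+1})$; the coefficient $\nabla W$ and source depend on $\tilde n$ at the same Sobolev level as the target, but differencing costs a derivative in the commutator. A Picard iteration in a single fixed topology would need this to be a strict contraction without the extra regularity, which does not follow. You would have to either switch to a Schauder argument as the paper does, or run a Nash--Moser/Bona--Smith-type two-norm scheme. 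As written, this step does not close.

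\textbf{Gradient estimate via $v = \log n$.} This is an elegant alternative structure to the paper's direct estimate (the paper tests $\nabla(\mathrm{eqn})$ against $p\,n_j^{p-1}$; you divide by $n$ first so the source contributes $nG'(n) = -\beta\gamma\theta n^{\gamma\theta}$, which is always of the good sign). However, the damping rate you obtain is $\beta\gamma\theta\,m_*^{\gamma\theta}$ with $m_* = \min(m,n^*)$, and when $m$ is small this vanishes like $m^{\gamma\theta}$. Meanwhile the error terms from the Brinkman equation, after elliptic regularity, are of size $C(M,a,\gamma)\mu^{-1}$ --- a quantity that does \emph{not} shrink with $m$. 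So the smallness hypothesis $C(M,a,\gamma)\mu^{-1}<\tfrac{\alpha}{8}\min(\gamma\theta,1)$ in the theorem, which deliberately does not involve $m$, is not strong enough to make your damping dominate for small $m$. The paper avoids this by a two-stage argument: a non-dissipative Gronwall estimate (Proposition \ref{W1pgeneral}) controls $\|\nabla n\|_{L^p}$ on $[0,\tau_m]$, where $\tau_m$ is the finite time at which the comparison ODE $\overline n$ reaches $\eta-\epsilon$; only thereafter is the dissipative estimate (Proposition \ref{W1pcontrol}) activated, with $\delta_0$ shrunk by a factor $e^{-C^*\tau_m/6}$ to absorb the transient growth. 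Your proposal has no analogue of this bridging step, and it is essential when $m$ is small. (Your pointwise minimum comparison is actually slightly sharper than the paper's Proposition \ref{minprinciple}, giving the threshold $n^*$ rather than $\eta<n^*$, so the mechanism for eventual strong damping is available; you simply haven't used it.) Finally, a minor bookkeeping point: the $D^2W$ and $\nabla\Delta W$ terms in your energy identity should carry minus signs, which matters precisely for the sign-bookkeeping you flag as the delicate step.
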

We note that the structure of the growth rate term \eqref{Gdef} introduces additional dissipation into the system \eqref{equations}, which helps to regularize the dynamics of the resulting system. Hence, this global existence result in Theorem \ref{global} for initial data with small gradient is the analogue of a similar small data global existence result for the damped 3D compressible Euler equations, see Theorem 5.2 in \cite{Sideris}. We emphasize that in our result in Theorem \ref{global}, there is no restriction on the $L^{\infty}(\Omega)$ norm of the initial data; there is only a restriction on the size of the $L^{p}(\Omega)$ norm of the \textit{gradient} of the initial data. 

After showing that nontrivial global smooth solutions exist for our system \eqref{equations}, our next goal is to \textbf{characterize the asymptotic behavior of these global strong solutions as $t \to \infty$}. Let $n^{*} > 0$ be defined as the unique positive value for which
\begin{equation}\label{nstardef}
G(n^*) := \alpha - \beta (n^*)^{\gamma \theta} = 0.
\end{equation}
From inspection, we see that there are two trivial stationary solutions to the system \eqref{equations}:
\begin{equation*}
(n, W) = (0, 0) \qquad \text{ and } \qquad (n, W) = (n^{*}, a(n^{*})^{\gamma}),
\end{equation*}
which gives preliminary hints as to the potential asymptotic behavior of the system. It is clear that the system started at $n_{0}(x) = 0$ stays stationary at the first stationary solution $(n, W) = (0, 0)$. We show that any global strong solution $(n(t, x), W(t, x))$ with $n \in C(0, T; W^{1, p}(\Omega))$ for some $p > d$, converges as $t \to \infty$ in an appropriate strong topology to the second stationary solution $(n^{*}, a(n^{*})^{\gamma})$. In particular, we obtain the following result long-time behavior result:

\begin{theorem}\label{longtime}
Any global strong solution $(n(t, x), W(t, x))$ to \eqref{equations} with the regularity $n \in C(0, T; W^{1, p}(\Omega))$ and $W \in C(0, T; W^{3, p}(\Omega))$ for some $p > d$ and for some smooth initial data $n_{0} \ge 0$, satisfies the following long-time behavior asymptotic estimates in time:
\begin{equation*}
\lim_{t \to \infty} \|n(t, x) - n^{*}\|_{L^{q}(\Omega)} = 0, \qquad \lim_{t \to \infty} \|W(t, x) - a(n^{*})^{\gamma}\|_{W^{2, q}(\Omega)} = 0,
\end{equation*}
for all $1 \le q < \infty$. 
\end{theorem}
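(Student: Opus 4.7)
My plan is a LaSalle--Barbalat argument built on a convex Lyapunov functional, in which the source-term dissipation is balanced against a transport contribution rendered \emph{quadratically} small near equilibrium by an algebraic cancellation coming from the elliptic equation. First I would upgrade the regularity hypothesis to uniform-in-time bounds $\sup_{t \ge 0}\|n(t)\|_{L^\infty(\Omega)} \le M_*$ and $\sup_{t \ge 0}\|W(t)\|_{W^{2,q}(\Omega)} \le C_q$. The first follows by applying the maximum principle to the first equation of \eqref{equations}: at a spatial maximum of $n$ the transport contribution collapses to $n\Delta W = \mu^{-1} n(W - an^\gamma)$, which is dominated by the superlinear reaction term $-\beta n^{\gamma\theta+1}$ once $n$ is large; the bound on $W$ then follows by elliptic regularity. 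A companion minimum-principle argument, using $G(n) \to \alpha > 0$ as $n \to 0$, propagates a uniform positive lower bound $n(t,x) \ge m_0$ from an assumed lower bound on $n_0$.

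\textbf{Lyapunov functional.} Let
\begin{equation*}
\Phi(n) := \tfrac{1}{\gamma\theta+1}\, n^{\gamma\theta+1} - (n^*)^{\gamma\theta}\, n + \tfrac{\gamma\theta}{\gamma\theta+1}(n^*)^{\gamma\theta+1},
\end{equation*}
so that $\Phi \ge 0$, $\Phi(n^*) = \Phi'(n^*) = 0$, and $\Phi''(n) = \gamma\theta\, n^{\gamma\theta-1} > 0$. Then $V(t) := \int_\Omega \Phi(n(t,x))\,dx$ is nonnegative and satisfies $V(t) \ge c\|n(t) - n^*\|_{L^2(\Omega)}^2$ whenever $n$ takes values in $[m_0, M_*]$. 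Differentiating, integrating by parts against the Neumann condition on $W$, and invoking $\Delta W = \mu^{-1}(W - an^\gamma)$ from the elliptic equation yields
\begin{equation*}
\frac{dV}{dt} = \frac{\gamma\theta}{\mu(\gamma\theta+1)}\int_\Omega n^{\gamma\theta+1}(W - an^\gamma)\,dx - \beta \int_\Omega n\big(n^{\gamma\theta} - (n^*)^{\gamma\theta}\big)^2 dx,
\end{equation*}
where the second term is a nonnegative dissipation $D(n)$ bounded below by $c'\|n - n^*\|_{L^2(\Omega)}^2$ on account of the uniform bounds on $n$.

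\textbf{Cancellation and Barbalat.} The decisive estimate is that the transport contribution $B(n,W) := \int_\Omega n^{\gamma\theta+1}(W - an^\gamma)\,dx$ is \emph{quadratic} in the deviation: $|B(n,W)| \le C\|n - n^*\|_{L^2(\Omega)}^2$, despite $W - an^\gamma$ being only linear in $n - n^*$ pointwise. This follows from the exact identity $\int_\Omega (W - a(n^*)^\gamma)\,dx = a\int_\Omega (n^\gamma - (n^*)^\gamma)\,dx$, which one obtains by integrating the elliptic equation for $u := W - a(n^*)^\gamma$, namely $-\mu\Delta u + u = a(n^\gamma - (n^*)^\gamma)$, over $\Omega$ and using the Neumann condition on $W$. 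Writing $W - an^\gamma = u - a(n^\gamma - (n^*)^\gamma)$ and splitting $n^{\gamma\theta+1} = (n^*)^{\gamma\theta+1} + \big(n^{\gamma\theta+1} - (n^*)^{\gamma\theta+1}\big)$ in each resulting integral, the leading constant-in-$(n-n^*)$ contributions cancel exactly, and the remainder is controlled by $\|n - n^*\|_{L^2}\|u\|_{L^2} \lesssim \|n - n^*\|_{L^2}^2$ via the standard $L^2$ elliptic estimate. Combining with the dissipation bound and integrating in time gives, for $\mu$ sufficiently large, $\int_0^\infty \|n(t) - n^*\|_{L^2(\Omega)}^2\,dt < \infty$. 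The regularity $n \in C(0,T; W^{1,p})$ together with the equation ensures Lipschitz continuity of $t \mapsto \|n(t) - n^*\|_{L^2}^2$, so Barbalat's lemma forces $\|n(t) - n^*\|_{L^2(\Omega)} \to 0$. Interpolation against the uniform $L^\infty$ bound upgrades this to $\|n(t) - n^*\|_{L^q(\Omega)} \to 0$ for every $1 \le q < \infty$, and elliptic regularity applied to $-\mu\Delta u + u = a(n^\gamma - (n^*)^\gamma)$ then yields $\|W(t) - a(n^*)^\gamma\|_{W^{2,q}(\Omega)} \to 0$.

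\textbf{Main obstacle.} The hard part is the quadratic cancellation bound $|B(n,W)| \le C\|n - n^*\|_{L^2}^2$; a naive estimate gives only a linear bound $O(\|n - n^*\|_{L^2})$, which cannot be absorbed by the quadratic dissipation and would prevent closing the Lyapunov inequality. A secondary technical point is propagating a uniform positive lower bound on $n$, needed to convert the degenerate dissipation $D(n) = \beta \int_\Omega n(n^{\gamma\theta} - (n^*)^{\gamma\theta})^2\,dx$ into genuine $L^2$ control on $n - n^*$; this is the reason the theorem should be understood for data $n_0$ bounded below away from zero, since otherwise $n$ remains identically zero along characteristics through the zeros of $n_0$, and strong convergence to $n^* > 0$ cannot hold on that set.
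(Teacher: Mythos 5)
Your Lyapunov--Barbalat strategy is a genuinely different route from the paper's proof, and the convex entropy $\Phi(n)$ together with the exact algebraic cancellation $\int_\Omega \left(W - a(n^*)^\gamma\right) = a\int_\Omega \left(n^\gamma - (n^*)^\gamma\right)$ (which does hold: integrate the Neumann problem for $u = W - a(n^*)^\gamma$ over $\Omega$) is an elegant way to render the transport contribution $B(n,W)$ quadratic in $n - n^*$. If one grants a uniform positive lower bound $n(t,x) \ge m_0 > 0$ and takes $\mu$ sufficiently large, the closed differential inequality, the $L^1_t$ bound on $\|n - n^*\|_{L^2}^2$, Barbalat, interpolation against $L^\infty$, and elliptic regularity do indeed deliver the claimed $L^q$ and $W^{2,q}$ limits.

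However, there is a genuine gap: the theorem as stated allows $n_0 \ge 0$ with vacuum, and your argument cannot reach it. You flag this yourself, but you then conclude that the theorem ``should be understood for data $n_0$ bounded below away from zero,'' because $n$ stays identically zero along characteristics emanating from $\{n_0 = 0\}$. That inference is incorrect, and it is precisely the point where the two proofs diverge in substance. Pointwise convergence to $n^*$ indeed fails on those characteristics, but the theorem claims only $L^q(\Omega)$ convergence for $1 \le q < \infty$, which survives provided the \emph{measure} of the vacuum set tends to zero. The paper's entire level-set machinery (Section \ref{levelsetsec}) is devoted to exactly this: using the flow map $\Phi_t$, the identity $\frac{d}{dt}\operatorname{meas}(V_t) = \int 1_{V_0}\,\operatorname{div}\mathbf{u}\,\exp(\cdots)$, and the strict positivity of the Green's function of $-\mu\Delta + I$ to show that $\operatorname{div}\mathbf{u} = -\mu^{-1}W$ is bounded above by a strictly negative constant on most of $\{n(t,\cdot)=0\}$, forcing $\operatorname{meas}(V_t)\to 0$ (Lemmas \ref{step1}--\ref{step3} and Proposition \ref{vanishingcdf}). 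Your dissipation $D(n) = \beta\int n(n^{\gamma\theta}-(n^*)^{\gamma\theta})^2$ degenerates as $n\to 0$ and gives no information on the vacuum set, so a Lyapunov inequality in the entropy $V$ alone cannot see this shrinkage. A related, secondary issue is that closing your Lyapunov inequality requires $\mu$ large to make $\tfrac{\gamma\theta}{\mu(\gamma\theta+1)}|B|$ smaller than $\beta D$, whereas Theorem \ref{longtime} imposes no lower bound on $\mu$ (it is a conditional statement about whatever global strong solutions exist); the paper's geometric argument uses only the sign of $\operatorname{div}\mathbf{u}$ and elliptic positivity, hence carries no such restriction. To repair your proposal you would have to augment it with a separate mechanism controlling the measure of the low-density set, which would essentially reproduce the paper's level-set and Green's-function analysis.
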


\subsection{Literature review}

The mathematical analysis of tumor growth has been widely studied in the mathematical literature. Such models often involve coupled systems of PDEs, and the questions that we study in the current manuscript, namely questions of global existence of solutions, uniqueness, stationary solutions, and long-time convergence to stationary states, are classical mathematical problems of interest in the study of tumor growth models throughout the years. We review some of the developments in the mathematical modeling of tumor growth.

One important class of models of tumor growth are free boundary models in which tumor cells are transported by a background flow, typically given as the gradient of an internal pressure via Darcy's law, where the free boundary evolves according to the dynamics of the tumor, for example in relation to the internal pressure along the moving interface. Typically, one models the concentration of nutrients (and potentially other substances) and the movement of the boundary. In the case of spherical symmetry (which allows for the consideration of a 1D model), questions of global asymptotic stability to steady states and the mathematical analysis of such models has been carried out in \cite{Friedman1D_3, Friedman1D_2, Friedman1D_1}. Local well-posedness has also been studied in more general contexts for radially symmetric tumor growth models, for example in the case of a coupled cytokine-tumor cell system in \cite{CytokineChenFriedman}.

For higher-dimensional free boundary tumor growth models which do not assume radial symmetry, the analysis of non-radially symmetric steady states under certain conditions is performed in \cite{FriedmanBifurcation2, FriedmanBifurcation} for two dimensions and in \cite{FriedmanBifurcation3D} for three dimensions, where stationary solutions which break radial symmetry are analyzed (referred to as ``symmetry-breaking bifurcations"). In such models, the movement of the boundary is driven by the internal pressure on the boundary, which must satisfy a condition of agreeing with the surface tension (proportion to the mean curvature) along the free boundary. The questions of local well-posedness \cite{BazilyFriedman1} and global well-posedness/asymptotic stability for solutions sufficiently close to radially symmetric solutions under certain assumptions on the proliferation rate \cite{BazilyFriedman2} have also been studied for this non-radially symmetric free boundary model of tumor growth.

There are also extensions to free boundary problems with mixed states, namely cells of various types where the total combined density is constant throughout the tumor. One such model (an $M3$ model) consisting of proliferating, quiescent, and dead/necrotic cells is studied in \cite{ChenFriedmanM3}, in terms of local well-posedness (without any assumption of radial symmetry). For such models with multiple (two or three) possible states for the tumor cells, there are results on existence of unique stationary solutions \cite{CuiFriedmanM3}, and linear asymptotic stability of stationary solutions \cite{ChenCuiFriedman}. There are also global well-posedness results for such models in the case of radial symmetry, for an elliptic-hyperbolic system with a Brinkman-type equation for the nutrient concentration \cite{CuiFriedmanGlobal}, for a parabolic-hyperbolic system with a parabolic equation for the nutrient concentration \cite{CuiWei}, and for a parabolic-hyperbolic system which also includes the effects of drug application \cite{Zhao}. For more information on free boundary tumor models, we refer the reader to the survey article \cite{FriedmanOverview} and the references therein.

There are extensions of these models in which there is \textit{variable tumor cell density} throughout the tissue, which gives these free boundary models a compressible nature, as tumor cells can compress/expand while being transported by the background flow. Such a model is analyzed in \cite{DonatelliTrivisa}, in which there is a continuity equation for the cell density, and the background flow is given by a Brinkman-type (elliptic) equation, and global existence of weak solutions is established in this case. In this model, the Brinkman equation is instead used, which is a regularized form of Darcy's law. These global existence results for weak solutions were extended in \cite{DonatelliTrivisa3} to the case of tumor growth with drug application and also in \cite{DonatelliTrivisa2}, where in addition to the continuity equation for the total density, there is a (parabolic) Forchhemier's equation (which generalizes the Brinkman law). We remark that these works use a lot of the mathematical machinery developed for compressible Navier-Stokes flows, see \cite{FeireislCompressible, LionsBook}.

In contrast to the models discussed thus far, in the current manuscript, we consider a tumor growth model on fixed domain. This model is motivated by past works \cite{Perthame2, Perthame3, Perthame4, Perthame1}, which consider the tumor as a medium in which cells are actively transported by a background flow and grow/deplete according to some given pressure law related to the density via a power law. Most closely related to the model in the current manuscript is the work of \cite{Perthame4}, which uses a Brinkman equation for the potential. These works \cite{Perthame2, Perthame3, Perthame4, Perthame1} are particularly interesting, as they relate such models to free boundary Hele-Shaw type models using a ``stiff pressure law limit," in which the power in the power law for the pressure goes to infinity. For such models of active transport and growth, particularly \cite{Perthame4}, global existence of weak solutions and development of a convergent finite difference scheme is accomplished in \cite{WeberTrivisa}, and this is later extended to a coupled system with drug application and nutrient consumption in \cite{TrivisaWeberDrug}, using methods of compressible flow dynamics \cite{FeireislCompressible, LionsBook}. 

We remark that the past existence results on the model in consideration in this manuscript are for global existence of \textit{weak solutions} (see \cite{WeberTrivisa}), but due to the dissipation present in \eqref{equations}, we anticipate a better result. This is in the spirit of the well-known work by \cite{Sideris}, where it is shown that the addition of damping to the 3D compressible Euler equations gives rise to global strong solutions for small initial data. Thus, the contribution of the current manuscript will be to study the well-posedness of \eqref{equations} in terms of global \textit{strong solutions} for appropriate initial data (with ``small" gradient), and to develop methods for studying the asymptotic long-time behavior of strong solutions to the system. In contrast to the methods for weak solutions in the spirit of \cite{FeireislCompressible, LionsBook} for compressible flows, we use different methods here, for showing well-posedness in terms of strong solutions. We also introduce a new perspective of studying the equations \eqref{equations}, which involves a geometric level set analysis (see Section \ref{levelsetsec}) that has the advantage of providing direct hands-on information about how the tumor growth system evolves.

\subsection{Outline}

The first goal of this manuscript is to establish global existence and uniqueness, which depends on careful a priori estimates. Along these lines, we first begin by proving some essential \textit{a priori estimates} and minimum/maximum principles in Section \ref{apriori} that will be useful in our analysis. 

In Section \ref{localexistencesec}, we show local well-posedness for the system with \textit{strictly positive} initial data. This consists of deriving estimates for a linearized equation in Section \ref{linearized}, and then applying the Schauder fixed point theorem to obtain a local solution as a fixed point of a map in Section \ref{fixedpointsec}. 

We then extend our local existence result for positive initial data to a global existence result for initial data with ``small" gradient in Section \ref{globalsec}, hence proving Theorem \ref{global}. The essential part of the analysis is showing an a priori dissipative estimate on the $L^{p}(\Omega)$ norm of $\nabla n$ in Proposition \ref{W1pcontrol}, which hence gives global control of these norms in time for appropriate solutions to \eqref{equations}. We also show a weaker (non-dissipative) a priori estimate in Proposition \ref{W1pgeneral}. We then obtain higher regularity results in Proposition \ref{regularity} to show that local solutions are actually smooth. Then, in Section \ref{globalproofs}, we combine all of these results to prove the global existence result in Theorem \ref{global}. 

Finally, since we now have that nontrivial strong solutions exist, we study the long-time asymptotics of global strong solutions to \eqref{equations}, when they exist in Section \ref{vacuumlongtime}. We begin by discussing a flow map $\Phi_{t}(x)$ of the background flow $\bd{u} := -\nabla W$ and its associated properties in Section \ref{transport}. We then use a level-set analysis to show properties of the expansion/contraction of regions near vacuum in Section \ref{levelsetsec}. Finally, we use all of this information to prove the long-time behavior result in Theorem \ref{longtime}. In the proof of this result, we use several results about the Green's function of the Brinkman equation, which we establish in the appendix in Section \ref{appendix}.

\section{A priori estimates}\label{apriori}

We will derive some important Sobolev and maximum/minimum a priori estimates that will have an important role in our analysis. We derive a priori estimates under the assumption that $n \in L^{\infty}(0, T; W^{1, p}(\Omega))$ for some $p > d$. The first is an observation about the  Brinkman equation, which quantifies the Calder\'{o}n-Zygmund regularity properties of the elliptic equation for the potential $W$. We first recall some classical Calder\'{o}n-Zygmund estimates for Neumann boundary problems, from \cite{ADN, Grisvard}.

\begin{proposition}\label{CZest}
Suppose that $W$ satisfies the Neumann boundary problem:
\begin{equation*}
-\mu \Delta W + W = f \text{ on } \Omega, \qquad \qquad \nabla W \cdot \bd{n}|_{\partial \Omega} = 0.
\end{equation*}
Then, for all positive integers $m$, 
\begin{equation*}
\|W\|_{W^{m + 2, p}(\Omega)} \le C(m, p) \Big(\|f\|_{W^{m, p}(\Omega)} + \|W\|_{L^{p}(\Omega)}\Big).
\end{equation*}
\end{proposition}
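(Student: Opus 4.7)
The plan is to reduce Proposition \ref{CZest} to the classical Calder\'{o}n--Zygmund theory for the Neumann Laplacian on a smooth bounded domain, and then to bootstrap one derivative at a time using the equation itself. The argument will proceed by induction on $m \ge 0$.

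For the base case $m = 0$, I would rewrite the equation as $-\Delta W = \mu^{-1}(f - W)$ with homogeneous Neumann data $\nabla W \cdot \bd{n}|_{\partial \Omega} = 0$. Since $\partial \Omega$ is smooth and the Neumann condition for $-\Delta$ satisfies the Lopatinski--Shapiro complementing condition, the classical Agmon--Douglis--Nirenberg theory \cite{ADN} (see also \cite{Grisvard}) gives the a priori estimate
\[
\|W\|_{W^{2,p}(\Omega)} \le C(p)\Big(\|\mu^{-1}(f - W)\|_{L^{p}(\Omega)} + \|W\|_{L^{p}(\Omega)}\Big),
\]
which yields the desired bound after absorbing the $\mu^{-1}$ factor into $C(p)$ by linear rescaling.

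For the inductive step, assume the bound holds for index $m - 1$. Given $f \in W^{m,p}(\Omega)$, applying the inductive hypothesis with the weaker forcing regularity $f \in W^{m-1,p}(\Omega)$ yields $W \in W^{m+1,p}(\Omega)$, so that $f - W \in W^{m,p}(\Omega)$. The higher-order Neumann Laplacian regularity from \cite{ADN, Grisvard} then upgrades $W$ to $W^{m+2,p}(\Omega)$, with
\[
\|W\|_{W^{m+2,p}(\Omega)} \le C(m,p)\Big(\|f - W\|_{W^{m,p}(\Omega)} + \|W\|_{L^{p}(\Omega)}\Big),
\]
and applying the inductive hypothesis once more to control $\|W\|_{W^{m,p}(\Omega)}$ by $\|f\|_{W^{m-2,p}(\Omega)} + \|W\|_{L^{p}(\Omega)}$ (or directly the base case when $m = 1$) closes the estimate in the required form.

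The result is essentially a direct restatement of classical elliptic regularity, so the only obstacle — and it is a minor one — is careful bookkeeping: verifying that the $\mu$-dependence does not degenerate the constant (which it does not, by the linear rescaling above), and verifying that each inductive application absorbs the intermediate $W^{m,p}$ norms of $W$ into the asserted right-hand side $\|f\|_{W^{m,p}(\Omega)} + \|W\|_{L^{p}(\Omega)}$. No ideas beyond the cited references \cite{ADN, Grisvard} are required.
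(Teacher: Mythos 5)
Your proposal is correct and rests on the same machinery as the paper's proof. The paper simply invokes Theorem~2.3.3.6 of Grisvard for $m=0$ and then Theorem~15.2 of Agmon--Douglis--Nirenberg for all $m\ge 1$, applied directly to the operator $-\mu\Delta + I$ with the Neumann boundary condition; you instead apply the Neumann Laplacian theory to $-\Delta W = \mu^{-1}(f-W)$ and run an explicit induction in $m$ to absorb the intermediate $\|W\|_{W^{m,p}}$ term. This is a valid and slightly more hands-on way to organize the same classical elliptic regularity argument, and the bookkeeping you sketch (use the inductive hypothesis at level $m-2$, or the base case when $m\le 1$, to dominate the intermediate norm) closes correctly. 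One small imprecision: the $\mu^{-1}$ factor in front of $f-W$ cannot genuinely be removed by a ``linear rescaling'' on a fixed bounded domain, so the constant actually does depend on $\mu$; this is harmless here because $\mu$ is a fixed parameter throughout the paper, and the paper's own notation $C(m,p)$ also suppresses the $\mu$- and $\Omega$-dependence, but it is worth stating the dependence honestly rather than claiming it disappears.
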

\begin{proof}
To obtain the estimates for the case of $m = 0$, we can appeal to Theorem 2.3.3.6 in \cite{Grisvard}, and from here, we can obtain the general result for general positive integers $m \ge 1$ using Theorem 15.2 in \cite{ADN}.
\end{proof}

Next, we apply the Calder\'{o}n-Zygmund estimates in Proposition \ref{CZest} to obtain the following \textit{a priori estimate} for the potential $W$. 

\begin{proposition}\label{nonnegativeW}
    Consider the equation:
    \begin{equation}\label{brinkmann}
        -\mu \Delta W + W = an^{\gamma} \quad \text{ on } \Omega,
    \end{equation}
    with Neumann boundary conditions for a nonnegative $n \in L^{\infty}(0, T; W^{1, p}(\Omega))$ for $p > d$. Then, there exists a unique classical solution $W \in L^{\infty}(0, T; W^{3, p}(\Omega))$ to \eqref{brinkmann} such that:
    \begin{equation*}
        \|W\|_{L^{\infty}(0, T; W^{3, p}(\Omega))} \le C_p\|n\|_{L^{\infty}(0, T; W^{1, p}(\Omega))}^{\gamma},
    \end{equation*}
    for a constant $C_p$ depending on $p$ that is independent of $T$. Also, $W \ge 0$ is nonnegative.
\end{proposition}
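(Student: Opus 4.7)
The plan is to combine three ingredients: (i) a Lax--Milgram argument that produces a weak solution $W(t, \cdot) \in H^{1}(\Omega)$ at each time, which is then bootstrapped via the Calder\'{o}n--Zygmund estimate of Proposition \ref{CZest}; (ii) a direct $L^{p}$ bound, obtained by testing against $|W|^{p-2}W$, which controls $\|W\|_{L^{p}(\Omega)}$ by $\|n^{\gamma}\|_{L^{p}(\Omega)}$ and so eliminates the $\|W\|_{L^{p}(\Omega)}$ term appearing on the right-hand side of the Calder\'{o}n--Zygmund bound; and (iii) a negative-part test that yields the sign condition $W \ge 0$.

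First I would observe that since $p > d$, the Sobolev embedding $W^{1, p}(\Omega) \hookrightarrow L^{\infty}(\Omega)$ gives $n \in L^{\infty}(0, T; L^{\infty}(\Omega))$, so that $a n^{\gamma}(t, \cdot) \in L^{2}(\Omega)$ for every $t$. The bilinear form $(u, v) \mapsto \mu \int_{\Omega} \nabla u \cdot \nabla v + \int_{\Omega} u v$ is bounded and coercive on $H^{1}(\Omega)$, so Lax--Milgram provides a unique weak solution $W(t, \cdot) \in H^{1}(\Omega)$, which Proposition \ref{CZest} applied with $m = 1$ upgrades to $W(t, \cdot) \in W^{3, p}(\Omega)$ together with the estimate
\[
\|W(t, \cdot)\|_{W^{3, p}(\Omega)} \le C_{p} \bigl( a \|n^{\gamma}(t, \cdot)\|_{W^{1, p}(\Omega)} + \|W(t, \cdot)\|_{L^{p}(\Omega)} \bigr).
\]

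Next I would remove the $\|W\|_{L^{p}(\Omega)}$ dependence by multiplying the Brinkman equation by $|W|^{p-2} W$, integrating over $\Omega$, and using the Neumann condition to drop the boundary term; the diffusive contribution $\mu(p-1) \int_{\Omega} |W|^{p-2} |\nabla W|^{2}$ is nonnegative, so H\"older on the right-hand side immediately yields $\|W\|_{L^{p}(\Omega)} \le a \|n^{\gamma}\|_{L^{p}(\Omega)}$. Combining this with the chain rule $\nabla(n^{\gamma}) = \gamma n^{\gamma - 1} \nabla n$ (valid for $n \ge 0$, $\gamma \ge 1$ after a routine truncation of the form $(n + \varepsilon)^{\gamma} - \varepsilon^{\gamma}$) and the pointwise estimate $n^{\gamma - 1} \le \|n\|_{L^{\infty}(\Omega)}^{\gamma - 1}$, I would bound
\[
\|n^{\gamma}\|_{W^{1, p}(\Omega)} \le C \|n\|_{L^{\infty}(\Omega)}^{\gamma - 1} \|n\|_{W^{1, p}(\Omega)} \le C' \|n\|_{W^{1, p}(\Omega)}^{\gamma}.
\]
Inserting these two bounds into the Calder\'{o}n--Zygmund inequality and taking the supremum over $t \in (0, T)$ delivers the advertised $W^{3, p}$ estimate.

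Finally, nonnegativity would follow from multiplying the equation by $W^{-} := \min(W, 0)$, integrating by parts, and again using the Neumann condition to eliminate the boundary contribution, together with the identities $\nabla W \cdot \nabla W^{-} = |\nabla W^{-}|^{2}$ and $W \cdot W^{-} = (W^{-})^{2}$ a.e.; the right-hand side $\int_{\Omega} a n^{\gamma} W^{-}$ is nonpositive since $a n^{\gamma} \ge 0$ and $W^{-} \le 0$, which forces $W^{-} \equiv 0$. None of the steps is genuinely difficult; the only small subtlety is justifying the chain rule for $n^{\gamma}$ when $\gamma$ is not an integer, which is handled by the approximation just mentioned and is not a real obstacle.
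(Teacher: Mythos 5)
Your proposal is correct and follows the same overall strategy as the paper's proof: Lax--Milgram for existence, an $L^p$ energy test to control $\|W\|_{L^p(\Omega)}$ directly in terms of $\|n^\gamma\|_{L^p(\Omega)}$ (thereby discharging the lower-order term in the Calder\'on--Zygmund bound), the chain rule together with the embedding $W^{1,p}(\Omega) \hookrightarrow L^\infty(\Omega)$ to estimate $\|n^\gamma\|_{W^{1,p}(\Omega)}$, and a sign test for nonnegativity. The two small divergences are in execution and both work in your favor. First, you test the equation with $|W|^{p-2}W$ rather than the paper's $W^{p-1}$; this is the cleaner choice, since the sign of $W$ has not yet been determined at that stage of the argument and $|W|^{p-2}W$ keeps all the resulting terms manifestly nonnegative. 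Second, for nonnegativity the paper tests with $-\text{sgn}(-W)$ and invokes a result from Feireisl--Novotny (pg.~64) to conclude $\mu\int_\Omega \Delta(-W)\,\text{sgn}(-W) \le 0$, whereas you test with $W^- := \min(W, 0)$ and use the elementary $H^1$-level identities $\nabla W \cdot \nabla W^- = |\nabla W^-|^2$ and $W\,W^- = (W^-)^2$ to force $W^- \equiv 0$. Both give the same conclusion, but your route is self-contained, avoids the external reference, and is the more standard Stampacchia-type truncation argument.
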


\begin{proof}
Note that the choice of $p$ ensures that by Sobolev embedding, $n \in L^{\infty}(0, T;  C(\overline{\Omega}))$ and hence, $n^{\gamma} \in L^{\infty}(0, T; L^{2}(\Omega))$, so the existence of a unique weak solution $W \in L^{\infty}(0, T; H^{1}(\Omega))$ is guaranteed by the Lax-Milgram theorem, see \cite{EvansPDE}. Testing the equation \eqref{brinkmann} by $W^{p - 1}$ and using Young's inequality, we obtain:
\begin{equation}\label{WH1}
\mu (p - 1)\int_{\Omega} W^{p - 2} |\nabla W|^{2} + \frac{1}{2} \int_{\Omega} W^{p} \le C\int_{\Omega} n^{p\gamma} \le C\|n^{\gamma}\|^{p}_{L^{\infty}(0, T; L^{p}(\Omega))} \le C\|n\|_{L^{\infty}(0, T; W^{1, p}(\Omega))}^{\gamma p},
\end{equation}
since $W^{1, p}(\Omega) \subset C(\overline{\Omega})$. Hence,
\begin{equation}\label{WLp}
\|W\|_{L^{\infty}(0, T; L^{p}(\Omega))} \le C_p\|n\|^{\gamma}_{L^{\infty}(0, T; W^{1, p}(\Omega))}.
\end{equation}

Since $\nabla (n^{\gamma}) = \gamma n^{\gamma - 1} \nabla n$ and $\gamma \ge 1$ and $W^{1, p}(\Omega) \subset C(\overline{\Omega})$ since $p > d$, we see that $n^{\gamma} \in L^{\infty}(0, T; W^{1, p}(\Omega))$ also. Therefore, by Calder\'{o}n-Zygmund estimates (Theorem \ref{CZest}) and the estimate in \eqref{WLp}, we obtain the desired regularity estimate:
\begin{equation*}
\|W\|_{L^{\infty}(0, T; W^{3, p}(\Omega))} \le C_p\Big(\|n^{\gamma}\|_{L^{\infty}(0, T; W^{1, p}(\Omega))} + \|W\|_{L^{\infty}(0, T; L^{p}(\Omega))}\Big) \le C_p\|n\|^{\gamma}_{L^{\infty}(0, T; W^{1, p}(\Omega))}.
\end{equation*}

Finally, it remains to show that $W$ is nonnegative. We test the equation \eqref{brinkmann} by $-\text{sgn}(-W)$, which is $-1$ if $W \le 0$ and $0$ if $W > 0$. We integrate over $\Omega$ to obtain:
\begin{equation}\label{Wminus}
\int_{\Omega} W^{-} = -\int_{\Omega} an^{\gamma} \text{sgn}(-W) + \mu \int_{\Omega} \Delta (-W) \text{sgn}(-W),
\end{equation}
where $W^{-} := -\min(W, 0)$. By the result on pg. 64 of \cite{FeireislNovotny}, since $W \in L^{\infty}(0, T; W^{3, p}(\Omega)) \subset L^{\infty}(0, T; H^{2}(\Omega))$ with $\nabla W \cdot \bd{n}|_{\partial \Omega} = 0$, we have $\displaystyle
\mu \int_{\Omega} \Delta (-W) \text{sgn}(-W) \le 0$. Hence, by $n \ge 0$ and \eqref{Wminus}, we obtain $\displaystyle \int_{\Omega} W^{-} \le 0$, and thus $W^{-}$ is identically zero, so $W(t, \cdot) \ge 0$ for all $t$. 
\end{proof}

Applying Hopf's lemma to the elliptic equation for $W$, we can also obtain minimum and maximum bounds on the potential $W$. 

\begin{proposition}\label{Wbound}
Let $W(x)$ and $n(x)$ be spatially smooth functions on $\overline{\Omega}$ with $n \ge 0$, which satisfy
\begin{equation*}
-\mu \Delta W + W = an^{\gamma}, \qquad \text{ on } \Omega.
\end{equation*}
Then, for all $x \in \overline{\Omega}$:
\begin{equation*}
a\left(\min_{x \in \overline{\Omega}} n(x)\right)^{\gamma} \le W(x) \le a\left(\max_{x \in \overline{\Omega}} n(x)\right)^{\gamma}.
\end{equation*}
\end{proposition}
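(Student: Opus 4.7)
The plan is to establish both bounds via a weak maximum principle combined with Hopf's lemma, using the Neumann boundary condition to rule out boundary extrema. I treat the upper bound in detail; the lower bound is symmetric.

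Let $M := \max_{\overline{\Omega}} n(x)$ and set $V := W - aM^{\gamma}$. Since $n^{\gamma} \le M^{\gamma}$ pointwise on $\overline{\Omega}$, the function $V$ is smooth and satisfies
\begin{equation*}
-\mu \Delta V + V = a(n^{\gamma} - M^{\gamma}) \le 0 \quad \text{ on } \Omega, \qquad \nabla V \cdot \bd{n}|_{\partial \Omega} = 0.
\end{equation*}
The goal is to show $V \le 0$ everywhere on $\overline{\Omega}$. Suppose, for contradiction, that $\max_{\overline{\Omega}} V =: V(x_{0}) > 0$ for some $x_{0} \in \overline{\Omega}$. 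I split into cases.

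First, consider the case $x_{0} \in \Omega$. Then $\Delta V(x_{0}) \le 0$ by the standard second-derivative test at an interior maximum, so $-\mu \Delta V(x_{0}) + V(x_{0}) \ge V(x_{0}) > 0$, contradicting the inequality satisfied by $V$. So the maximum must be attained on $\partial \Omega$. If $V$ is constant on $\overline{\Omega}$, then $V \equiv V(x_{0}) > 0$, so the equation forces $V(x_{0}) = a(n^{\gamma}(x) - M^{\gamma}) \le 0$ for every $x$, contradicting positivity. Hence $V$ is nonconstant, and the maximum $V(x_{0}) > 0$ is attained at some point $x_{0} \in \partial \Omega$ but not in the interior. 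Applying Hopf's lemma to the elliptic operator $-\mu \Delta + I$ (which satisfies the hypotheses since $-\mu \Delta V + V \le 0$ and $\Omega$ has smooth boundary so the interior sphere condition holds), we conclude that the outward normal derivative is strictly positive at $x_{0}$:
\begin{equation*}
\nabla V(x_{0}) \cdot \bd{n}(x_{0}) > 0.
\end{equation*}
This contradicts the Neumann boundary condition $\nabla V \cdot \bd{n}|_{\partial \Omega} = 0$. Therefore $V \le 0$ everywhere, which gives the upper bound $W(x) \le a M^{\gamma}$.

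For the lower bound, I run an entirely symmetric argument applied to $\widetilde{V} := a m^{\gamma} - W$ with $m := \min_{\overline{\Omega}} n$, which satisfies $-\mu \Delta \widetilde{V} + \widetilde{V} = a(m^{\gamma} - n^{\gamma}) \le 0$ with homogeneous Neumann data, and conclude $\widetilde{V} \le 0$, i.e., $W(x) \ge a m^{\gamma}$. The main technical point to verify carefully is the applicability of Hopf's lemma: this requires the interior sphere condition (granted by smoothness of $\partial \Omega$), that $V$ be nonconstant with the maximum attained on the boundary, and that the zeroth-order coefficient be handled properly — here the operator is $-\mu \Delta + I$, and since we are working at a point where $V > 0$, the standard formulation of Hopf's lemma for operators with nonnegative zeroth-order coefficient applies without modification.
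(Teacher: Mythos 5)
Your proof is correct and uses exactly the approach the paper indicates (Hopf's lemma applied to the difference with the constant barrier, together with the Neumann boundary condition to rule out a boundary extremum), which the paper itself simply credits to Hopf's lemma and defers to Lemma 3.2 of the cited reference. One minor observation: the proposition as stated omits the Neumann condition $\nabla W \cdot \bd{n}|_{\partial\Omega} = 0$, which you correctly supply and invoke — it is essential, since without it the conclusion is false.
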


\begin{proof}
    This follows by using Hopf's lemma, and we refer the interested reader to the complete proof provided in the proof of Lemma 3.2 in \cite{WeberTrivisa}.
\end{proof}

The next is an observation about the first equation, which is the continuity equation for the cell density. It is a maximum principle for the continuity equation, which will give uniform in time $\|n\|_{L^{\infty}(0, T; L^{\infty}(\Omega))}$ bounds. In particular, the result shows that the solution is always bounded above by $n^{*}$ if it is initially bounded above by $n^{*}$; else, the maximum value of the solution decays exponentially to $n^{*}$ asymptotically as $t \to \infty$. We refer the reader to \eqref{nstardef} for the definition of $n^{*}$.

\begin{proposition}\label{maxprinciple}
    Let $n$ and $W$ be spatially smooth functions on $\overline{\Omega}$ for each time $t \ge 0$, which satisfy \eqref{equations} with smooth initial data $n_{0} \ge 0$ and Neumann boundary conditions $\nabla W \cdot \bd{n}|_{\partial \Omega} = 0$. Recall the definition of $n^{*}$ from Theorem \ref{global}. Suppose that $\partial_{t}n(t, \cdot)$ is also spatially smooth on $\overline{\Omega}$ for all $t \ge 0$, and let $M = \max_{x \in \overline{\Omega}} n_{0}(x)$.
    \begin{itemize}
        \item If $0 \le M \le n^{*}$, we have that the following upper bound holds: 
    \begin{equation*}
        0 \le n(t, x) \le n^{*}, \qquad \text{ for all $t \in [0, T]$ and $x \in \Omega$}.
    \end{equation*}
    \item If $M > n^{*}$, then there exists a positive constant $r > 0$ (independent of $M$, depending only on the parameters $\alpha$ and $\beta$) such that
    \begin{equation*}
    0 \le n(t, x) \le n^{*} + (M - n^{*}) e^{-rt}, \qquad \text{ for all $t \in [0, T]$ and $x \in \Omega$.}
    \end{equation*}
    \end{itemize}
\end{proposition}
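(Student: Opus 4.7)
The plan is to track the maximum $M(t) = \max_{x \in \overline{\Omega}} n(t, x)$ and compare it to a scalar ODE. I would first rewrite the transport equation by expanding $\mathrm{div}(n \nabla W)$ and using the Brinkman equation to replace $\Delta W$ by $\mu^{-1}(W - an^\gamma)$, obtaining
\begin{equation*}
\partial_t n = \nabla n \cdot \nabla W + \mu^{-1} n (W - an^\gamma) + \alpha n - \beta n^{\gamma\theta + 1}.
\end{equation*}
At an extremum $x_*(t)$ of $n(t, \cdot)$ in $\overline{\Omega}$, I claim the transport term $\nabla n \cdot \nabla W$ vanishes: at an interior extremum this is immediate from $\nabla n(x_*) = 0$, and at a boundary extremum one notes that the tangential gradient $\nabla_\tau n(x_*) = 0$ (since $n|_{\partial \Omega}$ attains an extremum at $x_*$) while $\nabla W(x_*)$ is purely tangential by the Neumann condition $\nabla W \cdot \mathbf{n} = 0$. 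Verifying this boundary observation is the main obstacle, since $n$ itself carries no prescribed boundary condition; the extremum structure combined with the Neumann condition on $W$ is what rescues us.

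Evaluating at a maximizer and invoking the upper bound $W(x_*) \le a M(t)^\gamma$ from Proposition~\ref{Wbound}, the $\mu^{-1}$ term becomes nonpositive, leaving
\begin{equation*}
\partial_t n(t, x_*) \le M(t)\bigl(\alpha - \beta M(t)^{\gamma\theta}\bigr).
\end{equation*}
Using the assumed spatial smoothness of $n$ and $\partial_t n$, a standard envelope (Danskin-type) argument then translates this into the differential inequality $M'(t) \le M(t)(\alpha - \beta M(t)^{\gamma\theta})$. From here I would apply ODE comparison with the scalar equation $\dot z = z(\alpha - \beta z^{\gamma\theta})$: since $z \equiv n^*$ is an equilibrium, the case $M(0) \le n^*$ gives $M(t) \le n^*$ immediately. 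For $M(0) > n^*$, setting $u = z - n^*$ and using that $f(z) := z(\alpha - \beta z^{\gamma\theta})$ satisfies $f(n^*) = 0$ and $f'(z) \le f'(n^*) = -\alpha\gamma\theta$ for $z \ge n^*$, the mean value theorem yields $\dot u \le -\alpha\gamma\theta\, u$, producing exponential decay with a rate $r$ independent of $M$.

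For the lower bound $n \ge 0$, rather than a maximum-principle argument I would use the flow map of the background velocity $\dot X = -\nabla W(t, X)$, which leaves $\overline{\Omega}$ invariant by the Neumann condition. Along this flow, $y(t) := n(t, X(t))$ satisfies the multiplicative ODE
\begin{equation*}
\dot y = y\bigl[\alpha - \beta y^{\gamma\theta} + \mu^{-1}(W - ay^\gamma)\bigr],
\end{equation*}
whose solutions preserve the sign of the initial value; hence $n_0 \ge 0$ gives $n(t, x) \ge 0$ throughout $[0, T] \times \overline{\Omega}$.
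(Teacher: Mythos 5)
Your proof is correct, but it takes a genuinely different route from the paper's. The paper introduces $w := e^{rt}(n - n^*)$, derives its PDE, and argues by contradiction that $w$ cannot attain a strict maximum larger than $M - n^*$ at a positive time $(t_0,x_0)$, using the concavity of $z \mapsto \alpha z - \beta z^{1+\gamma\theta}$ to make the reaction term dominate with a prescribed rate; it defers $n \ge 0$ entirely to the minimum-principle Proposition~\ref{minprinciple}, which runs a Stampacchia-style argument testing with $\mathrm{sgn}(\overline n - n)$. You instead track $M(t) = \max_x n(t,x)$ directly, derive a pointwise inequality at the maximizer, upgrade it to $M'(t) \le f(M(t))$ via an envelope/Dini-derivative argument, and finish with ODE comparison against $\dot z = z(\alpha - \beta z^{\gamma\theta})$; for the lower bound you use the flow map $\dot X = -\nabla W$ and the multiplicative structure $\dot y = y\,g(t,y)$ along characteristics. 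Both work, and both ultimately hinge on the same two ingredients, namely $W(x_*) \le a M(t)^\gamma$ from Proposition~\ref{Wbound} and the tangent-line bound at $n^*$ (your $r = \alpha\gamma\theta$ is exactly the constant $C$ implicit in the paper's concavity step). Your version makes the exponential rate more transparent and avoids guessing the weight $e^{rt}$ in advance, at the cost of invoking the Danskin/Hamilton lemma on $M(t)$, which requires the joint continuity of $n$ and $\partial_t n$ (fine under the stated smoothness). Your boundary observation is actually more careful than the paper's: the paper asserts $\nabla w(t_0,x_0) = 0$ at a boundary maximizer ``due to Neumann boundary conditions,'' but $w$ carries no boundary condition; what one actually has, and what you correctly state, is $\nabla n \cdot \nabla W = \nabla_\tau n \cdot \nabla_\tau W + (\partial_\nu n)(\partial_\nu W) = 0$, since $\nabla_\tau n$ vanishes at the tangential extremum and $\partial_\nu W = 0$. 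Your characteristics argument for $n \ge 0$ is also a clean alternative, relying on uniqueness for the scalar ODE along flow lines (valid since $\gamma\theta > 0$ and $\gamma \ge 1$ make the right-hand side $C^1$ in $y$); it presupposes the flow map stays in $\overline\Omega$, which Proposition~\ref{flowprop} supplies via the Neumann condition.
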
  

\begin{proof}
We will hence prove the second assertion, since the first assertion follows similarly. Since $n^{*}$ is a constant, we obtain the following equation for $n - n^{*}$:
\begin{equation*}
\partial_{t}(n - n^{*}) - \text{div}((n - n^{*})\nabla W) = \alpha n - \beta n^{1 + \gamma \theta} + n^{*}\Delta W.
\end{equation*}
Setting $w := e^{rt}(n - n^{*})$ for $r > 0$ and using the second equation in \eqref{equations}, we obtain
\begin{equation*}
\partial_{t}w - \nabla w \cdot \nabla W = e^{rt}(\alpha n - \beta n^{1 + \gamma \theta}) + \mu^{-1} (w + e^{rt}n^*)(W - an^{\gamma}) + rw.
\end{equation*}
Note that since $\alpha n^{*} - \beta (n^{*})^{1 + \gamma \theta} = 0$ and the function $z \to \alpha z - \beta z^{1 + \gamma \theta}$ is concave down, there exists a positive constant $C$ such that
\begin{equation*}
\alpha n - \beta n^{1 + \gamma \theta} < -C(n - n^{*}), \qquad \text{ for all } n \ge n^{*},
\end{equation*}
so we can fix $r$ to be any constant such that $0 < r < C$. 

We then claim that $w(t, x) \le (M - n^{*})$ for all $t \ge 0$ and for all $x \in \overline{\Omega}$, where we note that $\max_{x \in \overline{\Omega}} w(0, x) = M - n^{*}$ by construction. To show this, we argue by contradiction and suppose that the strict maximum over $[0, T] \times \overline{\Omega}$ for arbitrary $T$ is attained at a point $(t_0, x_0) \in (0, T] \times \overline{\Omega}$, so that $w(t_0, x_0) > M - n^{*}$. In this case, we have:
\begin{itemize}
    \item $\nabla w(t_0, x_0) = 0$ (due to Neumann boundary conditions, this holds even if $x_0 \in \partial \Omega$). 
    \item Furthermore, since $w(t_0, x_0) > M - n^{*}$, we have that $n(t_0, x_0) > n^{*}$ and hence
    \begin{equation*}
    e^{rt}(\alpha n - \beta n^{1 + \gamma \theta}) + rw|_{(t_0, x_0)} \le -(C - r) w(t_0, x_0) < 0,
    \end{equation*}
    where $C - r > 0$. 
    \item Finally, by Proposition \ref{Wbound}, $W(t_0, x_0) - a(n(t_0, x_0))^{\gamma} \le 0$ since $n(t_0, x_0) = \max_{x \in \overline{\Omega}} n(t_0, x)$. Note also that $w + e^{rt}n^{*} > 0$.
\end{itemize}
Therefore, we conclude that $\partial_{t}w(t_0, x_0) < 0$, but this contradicts that $w(t_0, x_0) > M - n^{*}$ is the strict maximum in $[0, T] \times \overline{\Omega}$. So we conclude that $w(t, x) \le M - n^{*}$ for all $t \in [0, T]$, $x \in \overline{\Omega}$, and hence,
\begin{equation*}
0 \le n(t, x) \le n^{*} + (M - n^{*}) e^{-rt}, \qquad \text{ for all } t \in [0, T], x \in \overline{\Omega}.
\end{equation*}
We defer the proof of $n(t, x) \ge 0$ to the next proposition, which is about minimum principles.
\end{proof}

Finally, we conclude this section on a priori estimates by showing a corresponding lower bound for the solution away from vacuum if the initial data is initially bounded below away from vacuum. Since the proof of this lower bound result does not use Hopf's lemma, we can establish this lower bound for lower regularity solutions.

\begin{proposition}\label{minprinciple}
Consider smooth initial data $n_{0} \ge 0$ such that $n_{0} \ge m \ge 0$ for some nonnegative constant $m$. Then, a strong solution $(n, W)$ to \eqref{equations} with $n \in C(0, T; H^{2}(\Omega))$ and $\partial_{t}n \in C(0, T; H^{1}(\Omega))$ satisfies the uniform bound for all $t \in [0, T]$:
\begin{equation*}
n(t, x) \ge \overline{n}(t),
\end{equation*}
where $\overline{n}$ is the unique solution to the ODE initial value problem:
\begin{equation*}
\frac{d\overline{n}}{dt} = \alpha \overline{n} - \beta \overline{n}^{1 + \gamma \theta} - a\mu^{-1} \overline{n}^{1 + \gamma}, \qquad \overline{n}(0) = m.
\end{equation*}
As a consequence, for all $t \in [0, T]$:
\begin{equation*}
n(t, x) \ge n_{*} := \min(m, \eta),
\end{equation*}
where $\eta > 0$ is defined to be the unique solution to the equation:
\begin{equation}\label{nlowerstar}
\alpha - \beta n^{\gamma \theta} - a\mu^{-1}n^{\gamma} = 0.
\end{equation}
\end{proposition}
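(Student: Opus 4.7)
The plan is to recast the continuity equation as a transport equation with a pointwise source, then propagate the comparison with $\overline{n}$ along the characteristics of the background flow $\bd{u} := -\nabla W$. Expanding the divergence in the first equation of \eqref{equations} and substituting $\Delta W = \mu^{-1}(W - an^{\gamma})$ from the Brinkman equation gives
\begin{equation*}
\partial_{t}n - \nabla W \cdot \nabla n \,=\, n\bigl(\alpha - \beta n^{\gamma\theta} - a\mu^{-1} n^{\gamma} + \mu^{-1} W\bigr).
\end{equation*}
The regularity $n \in C(0, T; H^{2}(\Omega))$, together with the Sobolev embedding $H^{2}(\Omega) \hookrightarrow C(\overline{\Omega})$ in dimensions $d = 2, 3$, bounds $n$ on $[0, T] \times \overline{\Omega}$; by Proposition \ref{CZest} applied to the Brinkman equation we then obtain $W \in C(0, T; W^{3, p}(\Omega)) \hookrightarrow C(0, T; C^{2}(\overline{\Omega}))$ for any large $p$. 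Thus $\nabla W$ is Lipschitz in space uniformly in $t$, the flow map $\Phi_{t}$ of $\bd{u}$ is a well-defined $C^{1}$ family of diffeomorphisms, and the Neumann condition $\nabla W \cdot \bd{n}|_{\partial \Omega} = 0$ makes $\bd{u}$ tangent to $\partial \Omega$, so $\Phi_{t}$ is a bijection of $\overline{\Omega}$ onto itself.

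Along a characteristic, $N(t) := n(t, \Phi_{t}(x))$ is absolutely continuous with
\begin{equation*}
\dot N = N\bigl(\alpha - \beta N^{\gamma\theta} - a\mu^{-1} N^{\gamma} + \mu^{-1} W(t, \Phi_{t}(x))\bigr), \qquad N(0) = n_{0}(x) \ge m.
\end{equation*}
Because the right-hand side vanishes at $N = 0$ and is locally Lipschitz in $N$ on any bounded subinterval of $[0, \infty)$ (using $\gamma \ge 1$ and $\gamma\theta > 0$), scalar ODE comparison against the trivial solution propagates the initial nonnegativity to $N(t) \ge 0$; this simultaneously resolves the deferred $n \ge 0$ assertion in Proposition \ref{maxprinciple}. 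Next, by Proposition \ref{nonnegativeW} we have $W \ge 0$, so the right-hand side dominates $f(N) := \alpha N - \beta N^{1+\gamma\theta} - a\mu^{-1} N^{1+\gamma}$, which is precisely the right-hand side of the ODE defining $\overline{n}$. Scalar comparison (e.g.\ via an integrating-factor/Gr\"onwall argument applied to $N - \overline{n}$ together with the local Lipschitz bound on $f$) combined with $N(0) \ge m = \overline{n}(0)$ yields $N(t) \ge \overline{n}(t)$, and the bijectivity of $\Phi_{t}$ transfers this to the pointwise bound $n(t, x) \ge \overline{n}(t)$ on $\overline{\Omega}$.

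For the consequence $n \ge n_{*} := \min(m, \eta)$, an elementary phase-line analysis of $\overline{n}' = \overline{n}\, g(\overline{n})$, with $g(z) := \alpha - \beta z^{\gamma\theta} - a\mu^{-1} z^{\gamma}$, suffices: $g$ is continuous on $[0, \infty)$, strictly decreasing on $(0, \infty)$, with $g(0) = \alpha > 0$ and $g(z) \to -\infty$, so $\eta$ defined by \eqref{nlowerstar} is its unique positive zero and is a globally attracting equilibrium on $(0, \infty)$. Hence $\overline{n}$ is monotone: if $0 \le m \le \eta$ then $\overline{n}(t)$ increases from $m$ toward $\eta$ and stays $\ge m$; if $m > \eta$ then $\overline{n}(t)$ decreases from $m$ toward $\eta$ and stays $\ge \eta$; in both cases $\overline{n}(t) \ge \min(m, \eta) = n_{*}$, completing the proof. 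The main obstacle I anticipate is justifying the characteristic argument rigorously at the stated regularity level without appealing to a Hopf-type strong maximum principle in the spirit of Proposition \ref{maxprinciple}; the two ingredients that make this possible are precisely $W \ge 0$ from Proposition \ref{nonnegativeW}, which eliminates the sign-indefinite term in the ODE for $N$, and the no-flux condition on $\partial \Omega$, which makes $\Phi_{t}$ preserve $\overline{\Omega}$ and hence allows the pointwise bound to be read off without any separate boundary-point analysis.
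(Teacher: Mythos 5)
Your proof takes a genuinely different route from the paper's. You pass to characteristics of the Lipschitz field $\bd{u}=-\nabla W$, obtain a scalar ODE for $N(t)=n(t,\Phi_t(x))$, and run ODE comparison against $\overline{n}$. The paper instead subtracts the equations for $n$ and $\overline{n}$, multiplies by $\operatorname{sgn}(\overline n-n)$, drops the one-signed power-law differences, integrates over $\Omega$, and applies Gronwall to $\int_\Omega(\overline n-n)^+$. Both rest on the same two structural facts, namely $W\ge 0$ from Proposition \ref{nonnegativeW} and the tangentiality $\nabla W\cdot\bd{n}|_{\partial\Omega}=0$, and your characteristic picture is exactly the one the paper later develops (Sections \ref{transport}--\ref{levelsetsec}) for the long-time analysis. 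What the paper's weak $L^1$/Gronwall argument buys is the ability to close the proof at the stated, deliberately low regularity.

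That is where your proposal has a gap. From $n\in C(0,T;H^2(\Omega))$ and $\partial_t n\in C(0,T;H^1(\Omega))$ in $d=2,3$, neither $\nabla n(t,\cdot)\in H^1\hookrightarrow L^6$ nor $\partial_t n(t,\cdot)\in H^1$ is a continuous (or even along-a-curve pointwise-defined) function, so the assertion that $N(t):=n(t,\Phi_t(x))$ is absolutely continuous with $\dot N=N(\cdots)$ is not a direct chain-rule consequence. You flagged this as the main obstacle, but the two ingredients you offer as resolving it ($W\ge 0$ and the no-flux condition) do not address it: they control the sign of the source and the invariance of $\overline\Omega$ under the flow, not the differentiability of $N$. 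The step can be repaired, e.g.\ by mollifying $\bd{u}$ and $n$ and passing to the limit, or by invoking uniqueness and the Duhamel representation of continuous distributional solutions of the linear transport equation $\partial_t n+\bd{u}\cdot\nabla n=g$ with Lipschitz $\bd{u}$, viewing $g:=n\bigl(\alpha-\beta n^{\gamma\theta}-a\mu^{-1}n^\gamma+\mu^{-1}W\bigr)$ as a known continuous forcing, but as written that justification is missing. The paper sidesteps the issue entirely: it needs only $\overline n-n\in H^1([0,T]\times\Omega)$ and the identities $\operatorname{sgn}(f)\nabla f=\nabla f^+$, $\operatorname{sgn}(f)\partial_t f=\partial_t f^+$ from pg.~64 of \cite{FeireislNovotny}, which is exactly why it remarks that this minimum principle avoids Hopf's lemma and is valid for lower-regularity solutions. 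A smaller point: in $d=3$, $n\in C(0,T;H^2(\Omega))$ only gives $n^\gamma\in C(0,T;W^{1,6}(\Omega))$, hence $W\in C(0,T;W^{3,6}(\Omega))$, not $W^{3,p}$ for arbitrary large $p$; this still embeds into $C^{2,1/2}(\overline\Omega)$, so your conclusion that $\nabla W$ is Lipschitz survives, but the claim as stated is overstated.
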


\begin{proof}
Using the equation $-\mu\Delta W + W = an^{\gamma}$, we obtain from the continuity equation that
\begin{equation*}
\partial_{t}n - \nabla n \cdot \nabla W = \alpha n - \beta n^{1 + \gamma \theta} + \mu^{-1}nW - a\mu^{-1}n^{1 + \gamma}.
\end{equation*}
Since $W \ge 0$ by Proposition \ref{nonnegativeW}, we use the ODE for $\overline{n}$ and the fact that $\overline{n}$ is spatially constant to conclude that
\begin{equation*}
\partial_{t}\overline{n} - \nabla \overline{n} \cdot \nabla W = \partial_{t}\overline{n} \le \alpha \overline{n} - \beta \overline{n}^{1 + \gamma \theta} + \mu^{-1}\overline{n}W - a\mu^{-1} \overline{n}^{1 + \gamma}.
\end{equation*}
Subtracting the equations for $n$ and $\overline{n}$ and multiplying by $\text{sgn}(\overline{n} - n)$, we obtain:
\begin{align*}
&\partial_{t} (\overline{n} - n)^{+} \\
&\le \nabla (\overline{n} - n)^{+} \cdot \nabla W + \alpha (\overline{n} - n)^{+} - \beta (\overline{n}^{1 + \gamma \theta} - n^{1 + \gamma \theta})^{+} - a\mu^{-1} (\overline{n}^{1 + \gamma} - n^{1 + \gamma})^{+} + \mu^{-1}(\overline{n} - n)^{+} W\\
&\le \nabla (\overline{n} - n)^{+} \cdot \nabla W + \alpha (\overline{n} - n)^{+} + \mu^{-1}(\overline{n} - n)^{+}W.
\end{align*}
This is justified because $\overline{n} - n \in H^{1}([0, T] \times \Omega)$ by the regularity assumptions on $n$, and by pg.~64 in \cite{FeireislNovotny}, we have the identities $\text{sgn}(f)\partial_{t}f = \partial_{t}f^+$ and $\text{sgn} (f)\nabla f = \nabla f^{+}$ for $f \in H^{1}([0, T] \times \Omega)$. So integrating over $\Omega$, we obtain that
\begin{align*}
\partial_{t}\left(\int_{\Omega} (\overline{n} - n)^{+}\right) &\le \left|\int_{\Omega} \nabla (\overline{n} - n)^{+} \cdot \nabla W\right| + \alpha \int_{\Omega} (\overline{n} - n)^{+} + \mu^{-1} \int_{\Omega} (\overline{n} - n)^{+}W \\
&= \left|\int_{\Omega} (\overline{n} - n)^{+} \Delta W\right| + \alpha \int_{\Omega} (\overline{n} - n)^{+} + \mu^{-1} \int_{\Omega} (\overline{n} - n)^{+}W.
\end{align*}
So since $W \in L^{\infty}(0, T; W^{3, 6}(\Omega))$ by Proposition \ref{nonnegativeW} and Sobolev embedding for all $T > 0$, we obtain that for almost all $t \in [0, T]$:
\begin{equation*}
\int_{\Omega} (\overline{n} - n)^{+}(t) \le \Big(C(1 + \mu^{-1})\|W\|_{L^{\infty}(0, T; W^{3, 6}(\Omega))} + \alpha\Big) \int_{0}^{t}  \int_{\Omega} (\overline{n} - n)^{+}(s) ds,
\end{equation*}
since $\overline{n}(0) = m \le n_{0}$ by definition so that $\displaystyle \int_{\Omega} (\overline{n} - n)^{+}(0) = 0$. So by Gronwall's lemma (Lemma 1.4.2 in \cite{BFH18}), $(\overline{n} - n)^{+}(t) = 0$ for all $t \ge 0$, and hence $n(t, x) \ge \overline{n}(t) > 0$ for all $t \ge 0$.

A simple stability analysis of the ODE shows that for $\overline{n}(0) = m$ where $0 < m < \eta$, $n(t)$ is strictly increasing with $\lim_{t \to \infty} \overline{n}(t) = \eta$. If $m = 0$, then $\overline{n}(t) = 0$ for all $t$ and if $m \ge \eta$, then $\overline{n}(t) \ge \eta$ for all $t$. This verifies the second claim, given the previous result that $n(t, x) \ge \overline{n}(t)$ for all $t \in [0, T]$, since $\overline{n}(t) \ge n_{*} := \min(m, \eta)$ for all $t \ge 0$.
\end{proof}

Finally, we conclude this section by establishing the \textit{uniqueness} claim in Theorem \ref{global} on uniqueness of strong solutions $(n, W)$, with $n \in C(0, T; W^{1, p}(\Omega))$ and $W \in C(0, T; W^{3, p}(\Omega))$ for some $p > d$, to \eqref{equations}.

\begin{proof}[Proof of uniqueness claim in Theorem \ref{global}]
Consider $(n_{1}, W_{1})$ and $(n_{2}, W_{2})$, which are both strong solutions to \eqref{equations} for the same smooth initial data $n_{0} \ge 0$, with regularity $n_{i} \in C(0, T; W^{1, p}(\Omega))$ and $W_{i} \in C(0, T; W^{3, p}(\Omega))$ for some $p > d$ and $i = 1, 2$. Then,
\begin{multline}\label{diff1}
\partial_{t}(n_{1} - n_{2}) - \nabla (n_{1} - n_{2}) \cdot \nabla W_{1} - \nabla n_{2} \cdot \nabla (W_{1} - W_{2}) \\
- (n_{1} - n_{2}) \Delta W_{1} - n_{2} \Delta (W_{1} - W_{2}) = (\alpha n_{1} - \beta n_{1}^{1 + \gamma \theta}) - (\alpha n_{2} - \beta n_{2}^{1 + \gamma \theta})
\end{multline}
\begin{equation}\label{diff2}
-\mu \Delta (W_{1} - W_{2}) + (W_{1} - W_{2}) = an_{1}^{\gamma} - an_{2}^{\gamma}.
\end{equation}
We test the first equation \eqref{diff1} by $n_{1} - n_{2}$. We compute that
\begin{multline}\label{diff1result}
\frac{1}{2} \int_{\Omega} (n_{1} - n_{2})^{2}(t) - \frac{1}{2} \int_{0}^{t} \int_{\Omega} (n_{1} - n_{2})^{2} \Delta W_{1} - \int_{0}^{t} \int_{\Omega} (n_{1} - n_{2}) \nabla n_{2} \cdot \nabla (W_{1} - W_{2}) \\
- \int_{0}^{t} \int_{\Omega} n_{2}(n_{1} - n_{2})\Delta (W_{1} - W_{2}) = \int_{0}^{t} \int_{\Omega} \Big((\alpha n_{1} - \beta n_{1}^{1 + \gamma \theta}) - (\alpha n_{2} - \beta n_{2}^{1 + \gamma \theta})\Big) (n_{1} - n_{2}).
\end{multline}
For the estimates, we define $n_{max, T}$ to be any nonnegative number for which $n_{1}(t) \le n_{max, T}$ and $n_{2}(t) \le n_{max, T}$ for all $t \in [0, T]$. We estimate the resulting terms as follows:
\begin{itemize}
\item For the first term, since $W_{1} \in C(0, T; W^{3, p}(\Omega))$ for $p > d$, $\|\Delta W_{1}\|_{L^{\infty}(0, T; L^{\infty}(\Omega))} \le C$.
\item For the next term, we estimate that $\|an_{1}^{\gamma} - an_{2}^{\gamma}\|_{L^{2}(\Omega)} \le C(n_{max, T})\|n_{1} - n_{2}\|_{L^{2}(\Omega)}$, so
\begin{equation*}
\|\nabla (W_{1} - W_{2})\|_{L^{6}(\Omega)} \le C\|W_{1} - W_{2}\|_{H^{2}(\Omega)} \le C(n_{max, T})\|n_{1} - n_{2}\|_{L^{2}(\Omega)},
\end{equation*}
by Calder\'{o}n-Zygmund estimates applied to \eqref{diff2}. Therefore, since $\|\nabla n_{2}\|_{L^{3}(\Omega)} \le C$ by assumption, we estimate:
\begin{equation*}
\left|\int_{0}^{t} \int_{\Omega} (n_{1} - n_{2}) \nabla n_{2} \cdot \nabla (W_{1} - W_{2})\right| \le C(n_{max, T})\int_{0}^{t} \|n_{1} - n_{2}\|^{2}_{L^{2}(\Omega)}.
\end{equation*}
\item We compute
\begin{align*}
\left|\int_{0}^{t} \int_{\Omega} n_{2}(n_{1} - n_{2}) \Delta(W_{1} - W_{2})\right| &\le n_{max, T} \int_{0}^{t} \|n_{1} - n_{2}\|_{L^{2}(\Omega)} \|W_{1} - W_{2}\|_{H^{2}(\Omega)} \\
&\le C(n_{max, T}) \int_{0}^{t} \|n_{1} - n_{2}\|^{2}_{L^{2}(\Omega)}.
\end{align*}
\item Finally, we can estimate that
\begin{equation*}
|(\alpha n_{1} - \beta n_{1}^{1 + \gamma \theta}) - (\alpha n_{2} - \beta n_{2}^{1 + \gamma \theta})| \le C(n_{max, T}) |n_{1} - n_{2}|.
\end{equation*}
\end{itemize}
Next, we test the equation for the potential \eqref{diff2} by $W_{1} - W_{2}$ and obtain:
\begin{equation}\label{diff2result}
\mu \int_{0}^{t} \int_{\Omega} |\nabla (W_{1} - W_{2})|^{2} + \int_{0}^{t} \int_{\Omega} (W_{1} - W_{2})^{2} = \int_{0}^{t} \int_{\Omega} a(n_{1}^{\gamma} - n_{2}^{\gamma})(W_{1} - W_{2}).
\end{equation}
We estimate $|n_{1}^{\gamma} - n_{2}^{\gamma}| \le C(n_{max, T}) |n_{1} - n_{2}|$ and hence,
\begin{equation*}
\left|\int_{0}^{t} \int_{\Omega} a(n_{1}^{\gamma} - n_{2}^{\gamma}) (W_{1} - W_{2})\right| \le \frac{1}{2} \int_{0}^{t} \int_{\Omega} (W_{1} - W_{2})^{2} + C(n_{max, T}) \int_{0}^{t} \int_{\Omega} (n_{1} - n_{2})^{2}.
\end{equation*}
So combining all of the estimates with \eqref{diff1result} and \eqref{diff2result}, we obtain:
\begin{equation*}
\frac{1}{2} \int_{\Omega} (n_{1} - n_{2})^{2}(t) + \frac{1}{2} \int_{0}^{t} \int_{\Omega} (W_{1} - W_{2})^{2} + \mu \int_{0}^{t} \int_{\Omega} |\nabla(W_{1} - W_{2})|^{2} \le C(n_{max, T}) \int_{0}^{t} \int_{\Omega} (n_{1} - n_{2})^{2}.
\end{equation*}
By Gronwall's lemma, we conclude that $n_{1}(t) = n_{2}(t)$ and $W_{1}(t) = W_{2}(t)$ for all $t \in [0, T]$. 
\end{proof}

\section{Local existence of a solution for positive initial data}\label{localexistencesec}

Having proved the uniqueness claim in Theorem \ref{global} at the end of Section \ref{apriori}, it suffices to show existence of global smooth solutions for appropriate initial data (satisfying a ``small" gradient assumption, as described in Theorem \ref{global}). To show this global existence result, as a first step, we consider the \textbf{local existence of a solution for smooth positive initial data}, and then in the following Section \ref{globalsec}, we show that the local solution can be prolonged for all time (for appropriate initial data), using careful a priori estimates. In this section and the next, we consider strictly positive initial data $n_{0} \in C^{\infty}(\overline{\Omega})$, namely smooth initial data such that:
\begin{equation*}
M \ge n_{0}(x) \ge m > 0, \qquad \text{ for all } x \in \overline{\Omega},
\end{equation*}
where
\begin{equation}\label{Mmdef}
m := \min_{x \in \overline{\Omega}} n_{0}(x), \qquad M := \max_{x \in \overline{\Omega}} n_{0}(x).
\end{equation}
We will first establish local existence of a solution in an appropriate function space, up to some (sufficiently small) time $T^{*} > 0$ to the given system by using a Schauder fixed point argument, in the spirit of for example \cite{Valli}. This will require first obtaining estimates on a linearized form of the given continuity equation, and then defining an appropriate map for which a fixed point of this map would be a solution to the system. In this section, we will hence analyze a linear equation related to \eqref{equations}, which will allow us to show local existence of solutions to \eqref{equations} for positive smooth initial data.

\subsection{Analysis of the linearized continuity equation}\label{linearized}

To set up the local existence proof for \eqref{equations}, in which we will express the solution to \eqref{equations} as the fixed point of some abstract map, we first examine the linearized problem for the continuity equation, given by
\begin{equation}\label{continuitylinear}
\partial_{t}n - \nabla n \cdot \nabla W - n \Delta W = F,
\end{equation}
where $F$ is a given source term, and $W$ is a given a priori known function. To set up a fixed point argument for \eqref{equations} using the Schauder fixed point theorem, we need appropriate notions of existence, uniqueness, and continuous dependence for the \textit{linearized} equation \eqref{continuitylinear}. We have the following existence result and associated estimate for this linearized equation.

\begin{proposition}[Existence and uniqueness for the linearized problem]\label{linearestimate}
    For any integer $m \ge 2$, given a function $W \in L^{\infty}(0, T; H^{m + 2}(\Omega))$ and a source term $F \in L^{\infty}(0, T; H^{m}(\Omega))$, there exists a unique solution $n \in L^{\infty}(0, T; H^{m}(\Omega))$ to \eqref{continuitylinear}. Furthermore, this solution satisfies the following energy estimate for almost every $t \in [0, T]$:
    \begin{equation}\label{Hmest}
    \|n(t)\|_{H^{m}(\Omega)} \le C_{0}\Big(\|n_{0}\|_{H^{m}(\Omega)} + \|F\|_{L^{2}(0, T; H^{m}(\Omega))}\Big) \text{exp}\Big(Ct(1 + \|W\|_{L^{\infty}(0, T; H^{m + 2}(\Omega))})\Big),
    \end{equation}
    and $\partial_{t}n \in L^{2}(0, T; H^{m - 1}(\Omega))$ with the additional estimate:
    \begin{multline*}
        \|\partial_{t}n\|_{L^{2}(0, T; H^{m - 1}(\Omega))} \le \|F\|_{L^{2}(0, T; H^{m}(\Omega))} \\
        + C\|W\|_{L^{2}(0, T; H^{m + 2}(\Omega))}\Big(\|n_{0}\|_{H^{m}(\Omega)} + \|F\|_{L^{2}(0, T; H^{m}(\Omega))}\Big) \text{exp}\Big(CT(1 + \|W\|_{L^{\infty}(0, T; H^{m + 2}(\Omega))}\Big).
    \end{multline*}
\end{proposition}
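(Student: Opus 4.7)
The plan is to construct the solution $n$ via a regularization argument and then pass to the limit, using the $H^m$ energy estimate \eqref{Hmest} as the core a priori bound. First I would mollify $W$, $F$, and the initial datum spatially (after extending by reflection across $\partial\Omega$ to preserve the Neumann condition on $W$) to obtain smooth approximations $W_\epsilon$, $F_\epsilon$, $n_{0,\epsilon}$ with $\nabla W_\epsilon \cdot \bd{n}|_{\partial\Omega} = 0$ and $W_\epsilon \to W$ in $L^\infty(0,T;H^{m+2}(\Omega))$, $F_\epsilon \to F$ in $L^2(0,T;H^m(\Omega))$. Since the regularized transport field $-\nabla W_\epsilon$ is smooth and tangent to $\partial\Omega$, its characteristics remain in $\overline{\Omega}$, so the method of characteristics produces a unique classical solution $n_\epsilon \in C^\infty([0,T]\times\overline{\Omega})$ to the regularized equation.

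The heart of the proof is deriving the $H^m$ estimate uniformly in $\epsilon$. For each multi-index $\alpha$ with $|\alpha|\leq m$, differentiating the PDE yields
\begin{equation*}
\partial_t \partial^\alpha n - \nabla W \cdot \nabla \partial^\alpha n - (\Delta W)\partial^\alpha n = \partial^\alpha F + R_\alpha,
\end{equation*}
where $R_\alpha$ collects commutator terms in which at least one derivative falls on the coefficients $\nabla W$ or $\Delta W$. Testing against $\partial^\alpha n$ and integrating over $\Omega$, the top-order transport contribution integrates by parts using the Neumann condition to yield the crucial cancellation
\begin{equation*}
-\int_\Omega (\nabla W \cdot \nabla \partial^\alpha n)\,\partial^\alpha n \, dx = \frac{1}{2}\int_\Omega (\partial^\alpha n)^2 \Delta W \, dx,
\end{equation*}
which, combined with the zeroth-order $(\Delta W)\partial^\alpha n$ contribution, is bounded by $C\|W\|_{H^{m+2}(\Omega)}\|n\|_{H^m(\Omega)}^2$ via the embedding $H^{m+2} \hookrightarrow L^\infty$ for $d \leq 3$. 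Moser-type product estimates (using $m\geq 2$ and $d\leq 3$ to supply $L^\infty$ bounds on the appropriate factors through Sobolev embedding) give $\|R_\alpha\|_{L^2(\Omega)} \leq C\|W\|_{H^{m+2}(\Omega)}\|n\|_{H^m(\Omega)}$. Summing over $|\alpha|\leq m$ produces
\begin{equation*}
\frac{d}{dt}\|n\|_{H^m(\Omega)} \leq C\bigl(1 + \|W\|_{H^{m+2}(\Omega)}\bigr)\|n\|_{H^m(\Omega)} + C\|F\|_{H^m(\Omega)},
\end{equation*}
and Gr\"onwall's inequality delivers \eqref{Hmest}. Weak-$\ast$ compactness and linearity allow passage $\epsilon\to 0$ to produce $n\in L^\infty(0,T;H^m(\Omega))$ solving the original equation.

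The estimate on $\partial_t n$ follows directly by rearranging the PDE as $\partial_t n = F + \nabla n \cdot \nabla W + n\Delta W$, applying Sobolev product estimates to bound $\|\nabla n \cdot \nabla W + n\Delta W\|_{H^{m-1}(\Omega)} \leq C\|n\|_{H^m(\Omega)}\|W\|_{H^{m+2}(\Omega)}$ (valid for $m\geq 2$, $d\leq 3$ because $H^{m+2} \hookrightarrow W^{2,\infty}$ supplies the needed $L^\infty$ bounds on derivatives of $W$), then squaring in time, integrating over $[0,T]$, and substituting the $L^\infty(0,T;H^m)$ bound already obtained for $n$. Uniqueness follows by applying the same energy argument at level $m=0$ to the difference of two candidate solutions, which satisfies the equation with zero data and source. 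The main obstacle is the commutator estimate on $R_\alpha$: the terms in which the $m$ derivatives are split roughly evenly between $n$ and the coefficients of the transport operator require the full $H^{m+2}$ regularity of $W$ and a careful case analysis in the Moser-type product estimate, while the tangency of $-\nabla W$ to $\partial\Omega$ is essential to eliminate uncontrollable boundary terms in the top-order integration by parts.
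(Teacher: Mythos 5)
Your proposal follows essentially the same approach as the paper: differentiate the transport equation with $\partial^\alpha$ for $|\alpha|\leq m$, test against $\partial^\alpha n$, integrate by parts on the top-order transport term using the tangency of $\nabla W$ to $\partial\Omega$ to produce the $\frac{1}{2}\int(\partial^\alpha n)^2\Delta W$ cancellation, bound the commutator terms by $C\|W\|_{H^{m+2}}\|n\|_{H^m}^2$ via Sobolev embedding, and close with Gr\"onwall; the $\partial_t n$ bound then follows by rearranging the equation. The paper simply asserts that existence is ``standard,'' whereas you supply a concrete mollification-plus-characteristics construction, and you phrase the commutator bounds as Moser-type product estimates where the paper carries out the explicit case analysis on $|\beta|$; both are equivalent in substance.
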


\begin{remark}
In the estimate \eqref{Hmest}, we label the constant by $C_{0}$ since its actual value will play an important role in the Schauder fixed point estimates.
\end{remark}

\begin{proof}
    Existence of a solution is standard. We prove an estimate for the norms of $n$ and $\nabla n$, and then describe the general estimate for $D^{m}n$. We use that for $n \in L^{\infty}(0, T; H^{m}(\Omega))$, $n \in L^{\infty}(0, T; W^{m - 2, \infty}(\Omega))$ and $D^{m - 1}n \in L^{\infty}(0, T; L^{6}(\Omega))$ by Sobolev embedding in $\R^{d}$, $d = 2, 3$.

    \medskip

    \noindent \textbf{Estimate for $n$.} We test the PDE by $n$ and integrate by parts in $\displaystyle \int_{\Omega} n(\nabla n \cdot \nabla W)$ to obtain:
    \begin{equation*}
    \frac{1}{2} \int_{\Omega} |n(t)|^{2} = \frac{1}{2} \int_{\Omega} |n_{0}|^{2} + \frac{1}{2} \int_{0}^{t} \int_{\Omega} n^{2} \Delta W + \int_{0}^{t} \int_{\Omega} nF.
    \end{equation*}
    We estimate $\displaystyle 
    \left|\int_{0}^{t} \int_{\Omega} nF\right| \le \int_{0}^{t} \Big(\|n\|^{2}_{L^{2}(\Omega)} + \|F\|^{2}_{L^{2}(\Omega)}\Big)$. Since $m \ge 2$, we use Sobolev embedding to obtain:
    \begin{equation*}
    \|n(t)\|_{L^{2}(\Omega)}^{2} \le \|n_{0}\|_{L^{2}(\Omega)}^{2} + \|F\|^{2}_{L^{2}(0, T; L^{2}(\Omega))} + C\int_{0}^{t} \|n(s)\|_{H^{m}(\Omega)}^{2} \Big(1 + \|W(s)\|_{H^{m + 2}(\Omega)}\Big)ds.
    \end{equation*}

    \medskip

    \noindent \textbf{Estimate for $\nabla n$.} We consider $\partial_{j}n$ by differentiating the equation, after which we obtain:
    \begin{equation*}
        \partial_{t}n_{j} - \sum_{i = 1}^{3} \Big(n_{ij} W_{i} + n_{i}W_{ij} + n_{j}W_{ii} + nW_{iij}\Big) = F_{j}.
    \end{equation*}
    Next, we test by $n_{j}$ and integrate by parts in $\displaystyle \int_{\Omega} n_{ij} n_{j} W_{i} = -\frac{1}{2}\int_{\Omega} n_{j}^{2} W_{ii}$ to obtain:
    \begin{equation*}
    \frac{1}{2} \int_{\Omega} |\partial_{j}n(t)|^{2} = \frac{1}{2} \int_{\Omega} |\partial_{j}n_{0}|^{2} + \sum_{i = 1}^{3} \int_{0}^{t} \int_{\Omega} \Big(n_{i}W_{ij} + \frac{1}{2}n_{j}W_{ii} + nW_{iij}\Big) n_{j} + \int_{0}^{t} \int_{\Omega} F_{j}n_{j}.
    \end{equation*}
    Since $m + 2 \ge 4$, we obtain the estimate:
    \begin{equation*}
    \int_{\Omega} |\partial_{j}n(t)|^{2} \le \|n_{0}\|_{H^{1}(\Omega)}^{2} + \|F\|_{L^{2}(0, T; H^{1}(\Omega))}^{2} + C\int_{0}^{t} \|n(s)\|_{H^{m}(\Omega)}^{2} \Big(1 + \|W(s)\|_{H^{m + 2}(\Omega)}\Big) ds.
    \end{equation*}

    \medskip

    \noindent \textbf{Estimate for general $D^{m}n$ for $m \ge 2$.} We differentiate the equation with a general multi-index $\partial_{\alpha}$ with $|\alpha| = m$, and obtain:
    \begin{equation*}
        \partial_{t}\partial_{\alpha}n - \sum_{i = 1}^{3} \Big(\partial_{i}\partial_{\alpha}n \partial_{i}W + \partial_{\alpha} n \partial_{i}^{2}W + R_{\alpha}\Big) = \partial_{\alpha}F,
    \end{equation*}
    where $R_{\alpha}$ is a sum of terms of the form $\partial_{\beta}n \partial_{\gamma} W$ where $0 \le |\beta| \le m$ and $|\gamma| = m + 2 - |\beta|$. We then test by $\partial_{\alpha} n$, and by integrating by parts, we obtain:
    \begin{equation*}
        \frac{1}{2} \int_{\Omega} |\partial_{\alpha}n(t)|^{2} = \frac{1}{2} \int_{\Omega} |\partial_{\alpha} n_{0}|^{2} + \frac{1}{2} \int_{0}^{t} \int_{\Omega} (\partial_{\alpha} n)^{2} \Delta W + \sum_{i = 1}^{3} \int_{0}^{t} \int_{\Omega} R_{\alpha} \partial_{\alpha} n + \int_{0}^{t} \int_{\Omega} \partial_{\alpha} F \partial_{\alpha} n.
    \end{equation*}
    Since $W \in L^{\infty}(0, T; H^{m + 2}(\Omega))$ for $m \ge 2$, by Sobolev embedding,
    \begin{equation*}
    \left|\int_{\Omega} (\partial_{\alpha} n)^{2} \Delta W\right| \le \|n\|_{H^{m}(\Omega)}^{2} \|W\|_{H^{m + 2}(\Omega)}.
    \end{equation*}
    We obtain a similar estimate for the rest of the terms in $\displaystyle \int_{\Omega} R_{\alpha} \partial_{\alpha} n$, which is the sum of terms of the form $\partial_{\beta} n \partial_{\gamma} W$ for $0 \le |\beta| \le m$ and $|\gamma| = m + 2 - |\beta|$, using casework:
    \begin{itemize}
        \item Case 1: $0 \le |\beta| \le m - 2$. In this case, $\partial_{\beta} n$ is in $L^{\infty}(\Omega)$ by Sobolev embedding, so
        \begin{equation*}
        \left|\int_{\Omega} \partial_{\alpha} n \partial_{\beta} n \partial_{\gamma} W\right| \le \|\partial_{\alpha} n\|_{L^{2}(\Omega)} \|\partial_{\beta} n\|_{L^{\infty}(\Omega)} \|\partial_{\gamma} W\|_{L^{2}(\Omega)} \le C\|n\|_{H^{m}(\Omega)}^{2} \|W\|_{H^{m + 2}(\Omega)}.
        \end{equation*}
        \item Case 2: $|\beta| = m$. In this case, $|\gamma| = 2$, so since $m + 2 \ge 4$, by Sobolev embedding,
        \begin{equation*}
        \left|\int_{\Omega} \partial_{\alpha} n \partial_{\beta} n \partial_{\gamma} W\right| \le \|\partial_{\alpha}n\|_{L^{2}(\Omega)} \|\partial_{\beta}n\|_{L^{2}(\Omega)} \|\partial_{\gamma} W\|_{L^{\infty}(\Omega)} \le C\|n\|_{H^{m}(\Omega)}^{2} \|W\|_{H^{m + 2}(\Omega)}. 
        \end{equation*}
        \item Case 3: $|\beta| = m - 1$. In this case, $|\gamma| = 3$, so by Sobolev embedding and the fact $m + 2 \ge 4$, we can estimate $\partial_{\gamma}W$ in $L^{6}(\Omega)$. Similarly, we can estimate $\partial^{\beta} n$ in $L^{6}(\Omega)$ since $n \in H^{m}(\Omega)$ and $|\beta| = m - 1$. Hence,
        \begin{equation*}
        \left|\int_{\Omega} \partial_{\alpha} n \partial_{\beta} n \partial_{\gamma} W\right| \le C\|\partial_{\alpha}n\|_{L^{2}(\Omega)} \|\partial_{\beta}n\|_{L^{6}(\Omega)} \|\partial_{\gamma}W\|_{L^{6}(\Omega)} \le C\|n\|^{2}_{H^{m}(\Omega)} \|W\|_{H^{m + 2}(\Omega)}.
        \end{equation*}
    \end{itemize}
    So we obtain the final estimate:
    \begin{equation}\label{mdiffineq}
    \int_{\Omega} |\partial_{\alpha} n(t)|^{2} \le \|n_{0}\|_{H^{m}(\Omega)}^{2} + \|F\|_{L^{2}(0, T; H^{m}(\Omega))}^{2} + C\int_{0}^{t} \|n(s)\|^{2}_{H^{m}(\Omega)} \Big(1 + \|W(s)\|_{H^{m + 2}(\Omega)}\Big).
    \end{equation}

\medskip

\noindent \textbf{Conclusion of proof.} By adding the estimate \eqref{mdiffineq} over all multi-indices $0 \le |\alpha| \le m$:
\begin{equation*}
\|n(t)\|^{2}_{H^{m}(\Omega)} \le C\left(\|n_{0}\|^{2}_{H^{m}(\Omega)} + \|F\|_{L^{2}(0, T; H^{m}(\Omega))}^{2} + \int_{0}^{t} \|n(s)\|^{2}_{H^{m}(\Omega)} \Big(1 + \|W(s)\|_{H^{m + 2}(\Omega)}\Big)\right),
\end{equation*}
for a constant $C$ that does not depend on $T$. Therefore, by Gronwall's lemma (Lemma 1.4.2 in \cite{BFH18}), we have the following bound for almost every $t \in [0, T]$:
\begin{equation*}
\|n(t)\|_{H^{m}(\Omega)}^{2} \le C\Big(\|n_{0}\|_{H^{m}(\Omega)}^{2} + \|F\|^{2}_{L^{2}(0, T; H^{m}(\Omega))}\Big) \text{exp}\left(Ct\left(1 + \|W\|_{L^{\infty}(0, T; H^{m + 2}(\Omega))}\right)\right).
\end{equation*}
Then, using the equation itself, $\partial_{t}n = \nabla n \cdot \nabla W + n \Delta W + F$, and by Sobolev embedding,
\begin{equation*}
\|\partial_{t}n\|_{L^{2}(0, T; H^{m - 1}(\Omega))} \le 2\|n\|_{L^{\infty}(0, T; H^{m}(\Omega))} \|W\|_{L^{2}(0, T; H^{m + 2}(\Omega))} + \|F\|_{L^{2}(0, T; H^{m}(\Omega))},
\end{equation*}
which implies the desired bound:
\begin{multline*}
\|\partial_{t}n\|_{L^{2}(0, T; H^{m - 1}(\Omega))} \le \|F\|_{L^{2}(0, T; H^{m}(\Omega))} \\
+ C\|W\|_{L^{2}(0, T; H^{m + 2}(\Omega))} \Big(\|n_{0}\|_{H^{m}(\Omega)}^{2} + \|F\|_{L^{\infty}(0, T; H^{m}(\Omega))}^{2}\Big) \text{exp}\Big(CT(1 + \|W\|_{L^{\infty}(0, T; H^{m + 2}(\Omega))}\Big).
\end{multline*}
\end{proof}

\begin{proposition}[Continuous dependence for the linearized problem]\label{continuousdependence}
    Suppose that 
    \begin{equation*}
    W_{k} \to W \text{ in } L^{\infty}(0, T; H^{m + 2}(\Omega)), \qquad F_{k} \to F \text{ in } L^{\infty}(0, T; H^{m}(\Omega)),
    \end{equation*}
    for some integer $m \ge 2$, and let $n_{k}$ and $n$ be the associated solutions to \eqref{continuitylinear} with $W_{k}$ and $F_{k}$, and $W$ and $F$ respectively. Furthermore, suppose that $n \in L^{\infty}(0, T; H^{m + 1}(\Omega))$. Then, $n_{k} \to n$ in $L^{\infty}(0, T; H^{m}(\Omega))$.
\end{proposition}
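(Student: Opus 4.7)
The plan is to reduce the convergence statement to an application of the linearized energy estimate of Proposition \ref{linearestimate} applied to the difference $d_{k} := n_{k} - n$. Assuming that $n_{k}$ and $n$ share the same initial data $n_{0}$, so that $d_{k}(0) = 0$, I subtract the two linearized equations and rewrite the cross terms bilinearly as
\begin{equation*}
\nabla n_{k} \cdot \nabla W_{k} - \nabla n \cdot \nabla W = \nabla d_{k} \cdot \nabla W_{k} + \nabla n \cdot \nabla (W_{k} - W),
\end{equation*}
\begin{equation*}
n_{k} \Delta W_{k} - n \Delta W = d_{k} \Delta W_{k} + n \Delta (W_{k} - W),
\end{equation*}
so that $d_{k}$ satisfies a linearized continuity equation with coefficient $W_{k}$, zero initial data, and source term
\begin{equation*}
\widetilde{F}_{k} := (F_{k} - F) + \nabla n \cdot \nabla (W_{k} - W) + n \Delta (W_{k} - W).
\end{equation*}

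Since $W_{k} \to W$ in $L^{\infty}(0, T; H^{m + 2}(\Omega))$, the sequence $\|W_{k}\|_{L^{\infty}(0, T; H^{m + 2}(\Omega))}$ is uniformly bounded, say by some $K$. Applying Proposition \ref{linearestimate} to the equation for $d_{k}$ gives
\begin{equation*}
\|d_{k}\|_{L^{\infty}(0, T; H^{m}(\Omega))} \le C_{0} \|\widetilde{F}_{k}\|_{L^{2}(0, T; H^{m}(\Omega))} \exp\Big(CT(1 + K)\Big),
\end{equation*}
so it suffices to show $\widetilde{F}_{k} \to 0$ in $L^{\infty}(0, T; H^{m}(\Omega))$ (which then implies the $L^{2}$-in-time bound needed). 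The first piece $F_{k} - F$ vanishes in this norm by hypothesis. For the remaining two pieces, I use that for $m \ge 2$ in dimensions $d = 2, 3$, $H^{m}(\Omega)$ is a Banach algebra, so
\begin{equation*}
\|\nabla n \cdot \nabla (W_{k} - W)\|_{H^{m}(\Omega)} \le C \|\nabla n\|_{H^{m}(\Omega)} \|\nabla (W_{k} - W)\|_{H^{m}(\Omega)} \le C \|n\|_{H^{m + 1}(\Omega)} \|W_{k} - W\|_{H^{m + 2}(\Omega)},
\end{equation*}
and analogously $\|n \Delta (W_{k} - W)\|_{H^{m}(\Omega)} \le C \|n\|_{H^{m + 1}(\Omega)} \|W_{k} - W\|_{H^{m + 2}(\Omega)}$. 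Taking the supremum in $t$ and invoking the hypotheses $n \in L^{\infty}(0, T; H^{m + 1}(\Omega))$ and $W_{k} \to W$ in $L^{\infty}(0, T; H^{m + 2}(\Omega))$ yields $\widetilde{F}_{k} \to 0$ in $L^{\infty}(0, T; H^{m}(\Omega))$, which is the desired conclusion.

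The only nontrivial point is the appearance of $\nabla n$ and $n$ as coefficients in the source term $\widetilde{F}_{k}$: these come from transferring the differences $W_{k} - W$ from inside the transport operator to the right-hand side, and to keep $\widetilde{F}_{k}$ in $H^{m}$ with a bound that tends to zero, we need one more spatial derivative of $n$ than that already provided by the space in which the convergence is claimed. This is precisely why the extra hypothesis $n \in L^{\infty}(0, T; H^{m + 1}(\Omega))$ is imposed; without it, the Sobolev-algebra estimate above would fail to close, and the energy estimate of Proposition \ref{linearestimate} would not deliver convergence of $d_{k}$ at the level of $H^{m}$.
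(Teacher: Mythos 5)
Your proof is correct and relies on the same decomposition of the difference equation as the paper's proof. The only distinction is presentational: you move the $W_k - W$ terms into a source $\widetilde F_k$ and invoke Proposition \ref{linearestimate} as a black box (using that $\widetilde F_k \to 0$ in $L^\infty(0,T;H^m)$ via the Banach-algebra property of $H^m$), whereas the paper re-derives the $H^m$ Gronwall estimate directly on the difference $n - n_k$; the underlying bound, including the role of the extra hypothesis $n \in L^\infty(0,T;H^{m+1})$, is identical.
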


\begin{proof}
If we subtract the equations for $n$ and $n_{k}$, we obtain:
\begin{equation*}
    \partial_{t}(n - n_{k}) - \nabla n \cdot \nabla (W - W_{k}) - \nabla (n - n_{k}) \cdot \nabla W_{k} - n \Delta (W - W_{k}) - (n - n_{k}) \Delta W_{k} = F - F_{k},
\end{equation*}
with initial data $(n - n_{k})(0) = 0$. By testing the equation and its spatial derivatives with $\partial_{\alpha}(n - n_{k})$ for $0 \le |\alpha| \le m$, we obtain an analogous estimate for the norms of the difference:
\begin{multline*}
    \|(n - n_{k})(t)\|^{2}_{H^{m}(\Omega)} \le C\Bigg(\int_{0}^{t} \|n(s)\|_{H^{m + 1}(\Omega)} \|(n - n_{k})(s)\|_{H^{m}(\Omega)} \|(W - W_{k})(s)\|_{H^{m + 2}(\Omega)} ds \\
    + \int_{0}^{t} \|(n - n_{k})(s)\|^{2}_{H^{m}(\Omega)} \|W(s)\|_{H^{m + 2}(\Omega)} ds + \int_{0}^{t} \|(n - n_{k})(s)\|_{H^{m}(\Omega)} \|(F - F_{k})(s)\|_{H^{m}(\Omega)} ds \Bigg).
\end{multline*}
Therefore,
\begin{multline*}
\|(n - n_{k})(t)\|^{2}_{H^{m}(\Omega)} \le C\Big(\|W - W_{k}\|^{2}_{L^{2}(0, T; H^{m + 2}(\Omega))} + \|F - F_{k}\|^{2}_{L^{2}(0, T; H^{m}(\Omega))}\Big) \\
+ C\int_{0}^{t} \Big(1 + \|n(s)\|^{2}_{H^{m + 1}(\Omega)} + \|W(s)\|_{H^{m + 2}(\Omega)}\Big) \|(n - n_{k})(s)\|_{H^{m}(\Omega)}^{2}.
\end{multline*}
So by Gronwall's inequality (Lemma 1.4.2 in \cite{BFH18}), for almost every $t \in [0, T]$:
\begin{multline*}
\|(n - n_{k})(t)\|_{H^{m}(\Omega)}^{2} \le C\Big(\|W - W_{k}\|^{2}_{L^{2}(0, T; H^{m + 2}(\Omega))} + \|F - F_{k}\|_{L^{2}(0, T; H^{m}(\Omega))}^{2}\Big) \\
\cdot \text{exp}\Big(Ct(1 + \|n\|_{L^{\infty}(0, T; H^{m + 1}(\Omega))}^{2} + \|W\|_{L^{\infty}(0, T; H^{m + 2}(\Omega))})\Big).
\end{multline*}
Therefore, the convergence of $n_{k}$ to $n$ in $L^{\infty}(0, T; H^{m}(\Omega))$ follows from the convergences $W_{k} \to W$ in $L^{\infty}(0, T; H^{m + 2}(\Omega))$ and $F_{k} \to F$ in $L^{\infty}(0, T; H^{m}(\Omega))$, and the regularity assumption that $n \in L^{\infty}(0, T; H^{m + 1}(\Omega))$.

\end{proof}

\subsection{A fixed point argument}\label{fixedpointsec}

Next, we use the estimates on the linearized equations in Proposition \ref{linearestimate} to establish the existence of a local solution, using the Schauder fixed point theorem. We consider the following solution space for $n$:
\begin{equation*}
X = C(0, T; H^{2}(\Omega)),
\end{equation*}
and we define the following subset of $X$:
\begin{multline}\label{BRdef}
B_{R} := \{n \in L^{\infty}(0, T; H^{3}(\Omega)) : R^{-1} \le n(t, \cdot) \le R \text{ for $t \in [0, T]$}, \\
n(0, \cdot) = n_{0}, \|n\|_{L^{\infty}(0, T; H^{3}(\Omega))} \le R, \|\partial_{t}n\|_{L^{2}(0, T; H^{2}(\Omega))} \le R\},
\end{multline}
where we recall that the smooth initial data $n_{0}(x)$ satisfies $n_{0} \ge m := \min_{x \in \overline{\Omega}} n_{0}(x) > 0$ by assumption. By the Aubin-Lions compactness theorem, $B_{R}$ is a compact set in $X$, and it is convex and closed under the topology of $X$.

We define a map $\Phi: B_{R} \to X$, as follows. Given $\tilde{n} \in B_{R}$, note that $a\tilde{n}^{\gamma} \in C(0, T; H^{3}(\Omega))$ also, and let $\tilde{W}$ be the unique solution in $C(0, T; H^{5}(\Omega))$ to:
\begin{equation}\label{phidef1}
-\mu \Delta \tilde{W} + \tilde{W} = a\tilde{n}^{\gamma} \ \ \text{ on } \Omega, \qquad \nabla \tilde{W} \cdot \bd{n} |_{\partial \Omega} = 0.
\end{equation}
and then let $n$ be the unique solution to
\begin{equation}\label{phidef2}
\partial_{t}n + \text{div}(n\nabla \tilde{W}) = \alpha \tilde{n} - \beta \tilde{n}^{1 + \gamma \theta},
\end{equation}
satisfying the regularity $n \in C(0, T; H^{3}(\Omega))$ and $\partial_{t} n \in C(0, T; H^{2}(\Omega))$, whose existence and uniqueness is guaranteed by Proposition \ref{linearestimate}. 

We will choose $R > 0$ so that
\begin{equation}\label{Rchoice}
R := \max\Big((2C_{0})^{-1}\|n_{0}\|_{H^{3}(\Omega)}, 2M, 2m^{-1}\Big), \qquad \text{ for the constant $C_{0}$ from \eqref{Hmest}}.
\end{equation}
To apply the Schauder's fixed point theorem, which would then allow us to conclude the existence of a solution $n \in B_{R}$ locally up to a sufficiently small time $T > 0$, we just need to verify the following claims:
\begin{itemize}
    \item \textbf{Claim 1.} For $R$ sufficiently large and $T > 0$ sufficiently small, $\Phi: B_{R} \to B_{R}$. This claim consists of:
    \begin{itemize}
        \item \textbf{Claim 1A.} For $\tilde{n} \in B_{R}$, $\|n\|_{L^{\infty}(0, T; H^{3}(\Omega))} \le R$ and $\|\partial_{t}n\|_{L^{2}(0, T; H^{2}(\Omega))} \le R$, for $T$ sufficiently small, where $n := \phi(\tilde{n})$. 
        \item \textbf{Claim 1B.} For $\tilde{n} \in B_{R}$, $n := \phi(\tilde{n})$ satisfies $1/R \le n(t, \cdot) \le R$ for all $t \in [0, T]$ for $T$ sufficiently small.
    \end{itemize}
    \item \textbf{Claim 2.} $\Phi: B_{R} \to B_{R}$ is a continuous map with respect to the topology of $X$.
\end{itemize}

\medskip

We verify these claims in the following series of propositions.

\begin{proposition}[Claim 1A: Boundedness of solution norms]\label{1A}
    There exists $R > 0$ sufficiently large and $T > 0$ sufficiently small such that for all $\tilde{n} \in B_{R}$ and for $n := \Phi(\tilde{n})$:
    \begin{equation*}
        \|n\|_{L^{\infty}(0, T; H^{3}(\Omega))} \le R, \qquad \|\partial_{t}n\|_{L^{2}(0, T; H^{2}(\Omega))} \le R.
    \end{equation*}
\end{proposition}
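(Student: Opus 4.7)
The plan is to invoke Proposition \ref{linearestimate} with $m=3$ applied to the linear transport equation \eqref{phidef2} for $n$, coupled with the Calder\'on--Zygmund estimate from Proposition \ref{CZest} to control $\tilde W$ in $L^\infty(0,T; H^5(\Omega))$. The key observation driving the smallness in $T$ is that \eqref{Hmest} evaluated at $t=0$ yields the leading bound $C_0\|n_0\|_{H^3(\Omega)}$, which by the choice of $R$ in \eqref{Rchoice} is strictly smaller than $R$; the remaining contributions from the source term and the exponential growth factor carry explicit powers of $T^{1/2}$, so they can be made arbitrarily small by shrinking $T$.

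To implement this, I would first establish bounds of the form $\|\tilde n^\gamma\|_{L^\infty(0,T; H^3(\Omega))} \le C_1(R)$ and $\|\alpha \tilde n - \beta \tilde n^{1+\gamma\theta}\|_{L^\infty(0,T; H^3(\Omega))} \le C_1(R)$, which follow from the chain rule for $H^3$-compositions with smooth scalar maps. This is applicable here because $\tilde n \in B_R$ takes values in the compact interval $[R^{-1}, R]$, on which $z \mapsto z^\gamma$ and $z \mapsto \alpha z - \beta z^{1+\gamma\theta}$ are smooth, and $\tilde n$ is bounded in $H^3(\Omega)$ by $R$. Applying Proposition \ref{CZest} with $m=3$, together with the pointwise upper bound $|\tilde W| \le a R^\gamma$ from Proposition \ref{Wbound} (which absorbs the $\|\tilde W\|_{L^2}$ term on the right-hand side of the Calder\'on--Zygmund estimate), then yields $\|\tilde W\|_{L^\infty(0,T; H^5(\Omega))} \le C_2(R)$.

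Next, I would apply Proposition \ref{linearestimate} with $m=3$ to \eqref{phidef2}, which matches the form of \eqref{continuitylinear} after relabeling $W \mapsto -\tilde W$ (so the relevant norms are unchanged), with source $F = \alpha \tilde n - \beta \tilde n^{1+\gamma\theta}$ satisfying $\|F\|_{L^2(0,T; H^3(\Omega))} \le T^{1/2} C_1(R)$. The estimate \eqref{Hmest} gives
\[
\|n(t)\|_{H^3(\Omega)} \le C_0\bigl(\|n_0\|_{H^3(\Omega)} + T^{1/2} C_1(R)\bigr) \exp\bigl(CT(1 + C_2(R))\bigr),
\]
and taking $T \le T_1(R)$ sufficiently small makes the right-hand side $\le R$, using the choice of $R$ in \eqref{Rchoice} to absorb the $C_0\|n_0\|_{H^3(\Omega)}$ term into $R/2$. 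The bound on $\partial_t n$ follows analogously from the second assertion of Proposition \ref{linearestimate}: since $\|\tilde W\|_{L^2(0,T; H^5(\Omega))} \le T^{1/2} C_2(R)$, every term on the right-hand side of that estimate is either $\|F\|_{L^2(0,T; H^3(\Omega))}$ or is multiplied by $\|\tilde W\|_{L^2(0,T; H^5(\Omega))}$, so the entire quantity carries a $T^{1/2}$ factor. Further shrinking $T \le T_2(R)$ therefore forces $\|\partial_t n\|_{L^2(0,T; H^2(\Omega))} \le R$, and taking $T := \min(T_1(R), T_2(R))$ completes the argument.

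The main obstacle is purely bookkeeping: tracking the $R$-dependence of the composition constants $C_1(R)$, $C_2(R)$ (which will be polynomial in $R$ and $R^{-1}$ due to the need to differentiate $\tilde n^\gamma$ and $\tilde n^{1+\gamma\theta}$ up to order $3$), and verifying that the chosen $T$ threshold is uniform over all $\tilde n \in B_R$. No genuinely new analytic difficulty arises beyond the linearized and elliptic estimates already established in Proposition \ref{linearestimate} and Proposition \ref{CZest}.
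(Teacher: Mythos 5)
Your proposal is correct and follows essentially the same route as the paper: invoke Proposition \ref{linearestimate} with $m=3$, control $\tilde W$ in $H^5$ via the Calder\'on--Zygmund estimates after bounding $\tilde n^\gamma$ and $\alpha\tilde n - \beta\tilde n^{1+\gamma\theta}$ in $H^3$ using the two-sided bound $R^{-1}\le\tilde n\le R$, extract $T^{1/2}$ factors from the $L^2$-in-time norms, and use the choice of $R$ in \eqref{Rchoice} to handle the $T$-independent term $C_0\|n_0\|_{H^3}$. The paper simply writes out the derivatives of the composed nonlinearities explicitly rather than citing a chain-rule lemma, but the content is identical, and your observation about the sign relabeling $W\mapsto -\tilde W$ is a careful and correct way to reconcile \eqref{phidef2} with the form of \eqref{continuitylinear}.
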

\begin{proof}
To establish this estimate, we use the estimates in Proposition \ref{linearestimate}. We hence will estimate the norm $\|\alpha \tilde{n} + \beta \tilde{n}^{1 + \gamma \theta}\|_{H^{3}(\Omega)}$ for $\tilde{n} \in B_{R}$; since it is possible that $1 \le 1 + \gamma \theta < 3$, in order to estimate the $L^{2}$ norm of the second and third spatial derivatives:
\begin{equation*}
\partial_{j}\partial_{k}(\alpha \tilde{n} + \beta \tilde{n}^{1 + \gamma \theta}) = \alpha \partial_{j}\partial_{k} \tilde{n} + \beta (1 + \gamma \theta) \gamma \theta \tilde{n}^{\gamma \theta - 1} \partial_{j}\tilde{n} \partial_{k}\tilde{n} + \beta (1 + \gamma \theta) \tilde{n}^{\gamma \theta} \partial_{j}\partial_{k}\tilde{n},
\end{equation*}
\begin{align*}
\partial_{j}\partial_{k}\partial_{l}&(\alpha \tilde{n} + \beta \tilde{n}^{1 + \gamma \theta}) = \alpha \partial_{j}\partial_{k}\partial_{l}\tilde{n} + \beta(1 + \gamma \theta) \gamma \theta (\gamma \theta - 1) \tilde{n}^{\gamma \theta - 2} \partial_{j}\tilde{n} \partial_{k}\tilde{n} \partial_{l}\tilde{n} \\
&+ \beta (1 + \gamma \theta) \gamma \theta \tilde{n}^{\gamma \theta - 1} \Big(\partial_{j}\partial_{k} \tilde{n} \partial_{l}\tilde{n} + \partial_{j}\partial_{l}\tilde{n}\partial_{k}\tilde{n} + \partial_{k}\partial_{l}\tilde{n}\partial_{j}\tilde{n}\Big) + \beta (1 + \gamma \theta) \tilde{n}^{\gamma \theta} \partial_{j}\partial_{k}\partial_{l}\tilde{n},
\end{align*}
we must use the lower bound $\tilde{n} \ge R^{-1} > 0$ for $\tilde{n} \in B_{R}$, to estimate for example $\tilde{n}^{\gamma \theta - 1}$ and $\tilde{n}^{\gamma \theta - 2}$, which can have negative exponents, in addition to the upper bound $\tilde{n} \le R$ to estimate positive powers of $\tilde{n}$. Using the Sobolev embeddings $H^{3}(\Omega) \subset C(\Omega)$,
\begin{equation*}
\|\alpha \tilde{n} + \beta \tilde{n}^{1 + \gamma \theta}\|_{H^{3}(\Omega)} \le C\left((1 + R^{\gamma \theta} )\|\tilde{n}\|_{H^{3}(\Omega)} + R^{|\gamma \theta - 1|}\|\tilde{n}\|^{2}_{H^{3}(\Omega)} + R^{|\gamma \theta - 2|} \|\tilde{n}\|^{3}_{H^{3}(\Omega)}\right).
\end{equation*}
Therefore,
\begin{multline*}
\|\alpha \tilde{n} + \beta \tilde{n}^{1 + \gamma \theta}\|_{L^{2}(0, T; H^{3}(\Omega))} \le T^{1/2} \|\alpha \tilde{n} + \beta \tilde{n}^{1 + \gamma \theta}\|_{L^{\infty}(0, T; H^{3}(\Omega))} \\
\le CT^{1/2}\Big((1 + R^{\gamma \theta}) \|\tilde{n}\|_{L^{\infty}(0, T; H^{3}(\Omega))} + R^{|\gamma \theta - 1|} \|\tilde{n}\|^{2}_{L^{\infty}(0, T; H^{3}(\Omega))} + R^{|\gamma \theta - 2|} \|\tilde{n}\|_{L^{\infty}(0, T; H^{3}(\Omega))}^{3}\Big).
\end{multline*}
By the Calderon-Zygmund estimates in Propositions \ref{CZest} and \ref{nonnegativeW} and similar estimates on higher derivatives of $a\tilde{n}^{\gamma}$ for $\tilde{n} \in B_{R}$:
\begin{multline*}
\|\tilde{W}\|_{L^{2}(0, T; H^{5}(\Omega))} \le CT^{1/2} \Big(\|\tilde{n}^{\gamma}\|_{L^{\infty}(0, T; H^{3}(\Omega))} + \|W\|_{L^{\infty}(0, T; L^{2}(\Omega))}\Big) \le CT^{1/2}\Big(R^{\gamma - 1}\|\tilde{n}\|_{L^{\infty}(0, T; H^{3}(\Omega))} \\
+ R^{|\gamma - 2|}\|\tilde{n}\|_{L^{\infty}(0, T; H^{3}(\Omega))}^{2} + R^{|\gamma - 3|}\|\tilde{n}\|_{L^{\infty}(0, T; H^{3}(\Omega))}^{3} + \|\tilde{n}\|_{L^{\infty}(0, T; H^{3}(\Omega))}^{\gamma}\Big).
\end{multline*}

Recall the choice of $R$ in \eqref{Rchoice}. Then, by the estimate in Proposition \ref{linearestimate}, and by combining the definition of $B_{R}$ \eqref{BRdef} with the previous estimates, we obtain:
\begin{multline}\label{H3depend}
\|n\|_{L^{\infty}(0, T; H^{3}(\Omega))} \le C\Big(\|n_{0}\|_{H^{3}(\Omega)} + \|\alpha \tilde{n} + \beta \tilde{n}^{1 + \gamma \theta}\|_{L^{2}(0, T; H^{3}(\Omega))}\Big) \text{exp}\Big(CT(1 + \|W\|_{L^{\infty}(0, T; H^{5}(\Omega))})\Big) \\
\le C_{0}\left(\frac{R}{2C_{0}} + CT^{1/2}\left(R + R^{3 + |\gamma \theta - 2|} + R^{2 + |\gamma \theta - 1|} + R^{1 + \gamma \theta}\right) \right) \\
\cdot \exp\left(CT(1 + C(R^{\gamma} + R^{2 + |\gamma - 2|} + R^{3 + |\gamma - 3|})\right).
\end{multline}
From Proposition \ref{linearestimate}, we have the estimate:
\begin{small}
\begin{multline*}
\|\partial_{t}n\|_{L^{2}(0, T; H^{2}(\Omega))} \le \|\alpha \tilde{n} + \beta \tilde{n}^{1 + \gamma \theta}\|_{L^{2}(0, T; H^{3}(\Omega))} \\
+ C\|W\|_{L^{2}(0, T; H^{5}(\Omega))} \Big(\|n_{0}\|_{H^{3}(\Omega)} + \|\alpha \tilde{n} + \beta \tilde{n}^{1 + \gamma \theta}\|_{L^{2}(0, T; H^{3}(\Omega))} \Big) \text{exp}\Big(CT(1 + \|W\|_{L^{\infty}(0, T; H^{5}(\Omega))})\Big)
\end{multline*}
\end{small}
and hence:
\begin{multline}\label{H3depend2}
\|\partial_{t}n\|_{L^{2}(0, T; H^{2}(\Omega))} \le CT^{1/2}\left(R + R^{3 + |\gamma \theta - 2|} + R^{2 + |\gamma \theta - 1|} + R^{1 + \gamma \theta}\right) \\
+ CT^{1/2}(R^{\gamma} + R^{2 + |\gamma - 2|} + R^{3 + |\gamma - 3|}) \left(\|n_0\|_{H^{3}(\Omega)} + CT^{1/2}\left(R + R^{3 + |\gamma \theta - 2|} + R^{2 + |\gamma \theta - 1|} + R^{1 + \gamma \theta}\right)\right) \\
\cdot \text{exp}(CT(1 + C(R^{\gamma} + R^{2 + |\gamma - 2|} + R^{3 + |\gamma - 3|})).
\end{multline}
Therefore, we can choose $T > 0$ sufficiently small such that for $n := \Phi(\tilde{n})$, we have the estimates $\|n\|_{L^{\infty}(0, T; H^{3}(\Omega))} \le R$ and $\|\partial_{t}n\|_{L^{2}(0, T; H^{2}(\Omega))} \le R$ for all $\tilde{n} \in B_{R}$. Since we chose $R$ so that $\|n_{0}\|_{H^{3}(\Omega)} \le R/(2C_0)$ which is only crucial for the first term in \eqref{H3depend} which is the only term in \eqref{H3depend} and \eqref{H3depend2} that does not have a factor of $T$ to some positive power, the time $T > 0$ depends only on $\|n_{0}(x)\|_{H^{3}(\Omega)}$.
\end{proof}

\medskip

Next, we have the following proposition which bounds the change in the spatial minimum and maximum of the solution $n$ in time, to the linearized equation.

\begin{proposition}[Claim 1B: Maximum and minimum bounds]\label{1B}
There exists $R$ sufficiently large and $T > 0$ sufficiently small such that given $\tilde{n} \in B_{R}$, the solution $n := \Phi(\tilde{n})$ defined by \eqref{phidef1} and \eqref{phidef2} satisfies $1/R \le n \le R$ for all $t \in [0, T]$.
\end{proposition}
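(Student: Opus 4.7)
The plan is to carry out the maximum/minimum estimate via the method of characteristics, exploiting the fact that membership of $\tilde{n}$ in $B_{R}$ gives uniform pointwise control of the coefficient and source term of the linear transport equation \eqref{phidef2}. Since $\tilde{n} \in L^{\infty}(0,T;H^{3}(\Omega))$ with norm bounded by $R$ and $R^{-1} \le \tilde{n} \le R$, Propositions \ref{CZest} and \ref{nonnegativeW} give $\tilde{W} \in L^{\infty}(0,T;H^{5}(\Omega))$ with $\|\tilde{W}\|_{L^{\infty}(0,T;H^{5}(\Omega))} \le C(R)$. The Sobolev embedding $H^{5}(\Omega)\hookrightarrow C^{3}(\overline{\Omega})$ (valid for $d\le 3$) then yields
\[
\|\nabla \tilde{W}\|_{L^{\infty}([0,T]\times\overline{\Omega})} + \|\Delta \tilde{W}\|_{L^{\infty}([0,T]\times\overline{\Omega})} + \|D^{2}(\nabla\tilde{W})\|_{L^{\infty}([0,T]\times\overline{\Omega})} \le C(R),
\]
and the polynomial source obeys $\|\alpha\tilde{n} - \beta\tilde{n}^{1+\gamma\theta}\|_{L^{\infty}([0,T]\times\overline{\Omega})} \le C(R)$.

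Next I would introduce the Lagrangian flow $X(t;x_{0})$ solving $\dot{X}(t;x_{0}) = -\nabla\tilde{W}(t,X(t;x_{0}))$ with $X(0;x_{0}) = x_{0}$. Since $\nabla\tilde{W}$ is uniformly Lipschitz in space and continuous in time (continuity in time is inherited from $\tilde{n} \in C(0,T;H^{2}(\Omega))$ via \eqref{phidef1} and Proposition \ref{CZest}), this flow exists, is unique, and is a homeomorphism of $\overline{\Omega}$ onto itself for each $t \in [0,T]$; the Neumann condition $\nabla\tilde{W}\cdot\bd{n}|_{\partial\Omega} = 0$ guarantees that $-\nabla\tilde{W}$ is tangent to $\partial\Omega$, so $\overline{\Omega}$ is flow-invariant. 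Rewriting \eqref{phidef2} in non-conservative form as $\partial_{t}n - \nabla\tilde{W}\cdot\nabla n = n\Delta\tilde{W} + \alpha\tilde{n} - \beta\tilde{n}^{1+\gamma\theta}$ and evaluating along characteristics gives the pointwise ODE
\[
\tfrac{d}{dt} n(t,X(t;x_{0})) = n(t,X(t;x_{0}))\,\Delta\tilde{W}(t,X(t;x_{0})) + \bigl(\alpha\tilde{n} - \beta\tilde{n}^{1+\gamma\theta}\bigr)(t,X(t;x_{0})),
\]
so that $\bigl|\tfrac{d}{dt}(n+1)\bigr| \le C(R)(n+1)$ along each characteristic.

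Applying Gronwall's inequality to the two one-sided versions of this differential inequality, and using the initial bound $m \le n_{0}(x_{0}) \le M$, I would obtain
\[
(m+1)e^{-C(R)t} - 1 \;\le\; n(t,X(t;x_{0})) \;\le\; (M+1)e^{C(R)t} - 1
\]
for all $t \in [0,T]$ and $x_{0} \in \overline{\Omega}$. Since the flow $x_{0} \mapsto X(t;x_{0})$ is surjective onto $\overline{\Omega}$, the same bounds hold for $n(t,x)$ at every $x \in \overline{\Omega}$. By the choice \eqref{Rchoice} of $R$ (notably $R \ge 2M$ and $R \ge 2m^{-1}$), one has $\frac{R+1}{M+1} > 1$ and $\frac{m+1}{1+R^{-1}} > 1$ by margins depending only on $m$ and $M$. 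Therefore $T$ can be chosen small enough (depending on $m$, $M$, and $C(R)$) so that $(M+1)e^{C(R)T} - 1 \le R$ and $(m+1)e^{-C(R)T} - 1 \ge R^{-1}$, yielding the desired bound $R^{-1} \le n(t,x) \le R$ on $[0,T]\times\overline{\Omega}$. The only delicate point is justifying the Lagrangian argument --- specifically that $\nabla\tilde{W}$ is jointly continuous in $(t,x)$, Lipschitz in $x$, and tangent to $\partial\Omega$ --- but all of this follows immediately from the Sobolev regularity $\tilde{W} \in C(0,T;H^{4}(\Omega))$ and the Neumann condition in \eqref{phidef1}, so this should pose no serious obstacle.
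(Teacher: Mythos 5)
Your proof is correct, but it takes a genuinely different route from the paper's. The paper works at the level of the PDE: it first substitutes $\Delta\tilde{W} = \mu^{-1}(\tilde{W} - a\tilde{n}^{\gamma})$ (from the Brinkman equation) to rewrite \eqref{phidef2} as $\partial_{t}n - \nabla n \cdot \nabla\tilde{W} = \alpha\tilde{n} - \beta\tilde{n}^{1+\gamma\theta} + \mu^{-1}n(\tilde{W} - a\tilde{n}^{\gamma})$, then invokes the Hopf-type pointwise bound $0 \le \tilde{W} \le aR^{\gamma}$ from Proposition \ref{Wbound} (not Sobolev embedding), and applies a direct maximum/minimum principle to the conformally transformed quantities $N = e^{-Bt}n - Bt$ and $\overline{n} = e^{bt}(n + bt)$, concluding that these cannot attain an interior strict extremum. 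You instead pass to the Lagrangian picture: you bound $\|\Delta\tilde{W}\|_{L^{\infty}}$ by $C(R)$ via Calder\'{o}n--Zygmund and Sobolev embedding, then integrate the transport equation along the characteristic flow of $-\nabla\tilde{W}$ and close with Gronwall. Both are standard and valid; the paper's route avoids any explicit construction of the flow map (and indeed the paper only introduces flow maps later, in Section \ref{transport}), while your route is more mechanical and requires you to first verify that the characteristics exist globally on $[0,T]$ and preserve $\overline{\Omega}$, which you correctly deduce from the Neumann boundary condition.

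One small gap: the differential inequality $\bigl|\tfrac{d}{dt}(n+1)\bigr| \le C(R)(n+1)$ does not follow directly from $\bigl|\tfrac{d}{dt}n\bigr| \le C(R)|n| + C(R) = C(R)(|n|+1)$, since $|n|+1 > n+1$ whenever $n < 0$ and a priori $n$ need not stay nonnegative. The inequality as stated requires $n \ge 0$, which is established only a posteriori by the very Gronwall bound you are trying to prove. This is closed by a routine continuity/bootstrap argument (at $t=0$, $n = n_{0} \ge m > 0$, and as long as the bound remains valid the lower estimate $(m+1)e^{-C(R)t} - 1$ stays positive for $t$ small), but that step should be made explicit. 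Equivalently, one can run the two one-sided inequalities with a shifted unknown $n + c$ for a constant $c$ chosen from the source bound rather than $c = 1$, avoiding the appeal to nonnegativity of $n$ altogether. This is the same kind of small bookkeeping the paper handles by choosing $B$ and $b$ to dominate both the source and the $\tilde{W}$-dependent coefficient.
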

\begin{proof}
Since this is a linear equation in $n$, the result will follow from a maximum/minimum principle argument. Let $m := \min_{x \in \overline{\Omega}} n_{0}(x)$ and $M := \max_{x \in \overline{\Omega}} n_{0}(x)$ be minimum and maximum values of the initial data. We claim that for all $\tilde{n} \in B_{R}$ such that $\tilde{n}$ and $\partial_{t}\tilde{n}$ are spatially smooth on $\overline{\Omega}$, we have that
\begin{equation*}
me^{-bt} - bt \le \min_{x \in \overline{\Omega}} n(t, \cdot) \le \max_{x \in \overline{\Omega}} n(t, \cdot) \le e^{Bt}(M + Bt),
\end{equation*}
for all $t \ge 0$, for $b > 0, B > 0$. It suffices to consider $\tilde{n} \in B_{R}$ with $\tilde{n}, \partial_{t}\tilde{n}$ spatially smooth, since the minimum/maximum inequality holds for general $\tilde{n} \in B_{R}$ by approximation. 

\medskip

\noindent \textbf{Verification of maximum bound.} Note that we can rewrite the first equation as:
\begin{equation}\label{maxReqn}
\partial_{t}n - \nabla n \cdot \nabla \tilde{W} = \alpha \tilde{n} - \beta \tilde{n}^{1 + \gamma \theta} + \mu^{-1}n(\tilde{W} - a\tilde{n}^{\gamma}).
\end{equation}
By Proposition \ref{Wbound} and the fact that $\tilde{n} \in B_{R}$, $0 \le \tilde{W}(t, \cdot) \le a\left(\max_{t \in [0, T], x \in \overline{\Omega}} \tilde{n}\right)^{\gamma} \le aR^{\gamma}$, and the function $\alpha z - \beta z^{1 + \gamma \theta}$ is uniformly bounded from above for $z \ge 0$. Hence, for our choice of $R$ in \eqref{Rchoice}, we can define $B := \max\Big(a\mu^{-1}R^{\gamma}, 1 + \max_{z \ge 0} (\alpha z - \beta z^{1 + \gamma \theta})\Big)$ which satisfies:
\begin{equation*}
\alpha z - \beta z^{1 + \gamma \theta} < B \ \ \text{ for all } z \ge 0, \qquad 
0 \le \mu^{-1}\tilde{W}(t, x) \le a\mu^{-1}R^{\gamma} \le B \ \ \text{for all $t \in [0, T]$, $x \in \overline{\Omega}$}.
\end{equation*}
Therefore, $N := e^{-Bt}n - Bt$ satisfies $\partial_{t}N - \nabla N \cdot \nabla \tilde{W} < 0$, and we observe that $N$ is spatially smooth by regularity theory, since $\tilde{n}$ is. Then, $N$ cannot attain a strict maximum at $t > 0$, since at such a strict maximum, $\nabla N \cdot \nabla \tilde{W} = 0$ and hence $\partial_{t} N < 0$ at this strict maximum, which gives a contradiction. Hence, $N(t, \cdot) \le \max_{x \in \overline{\Omega}} N(0, \cdot) = M$, so that 
\begin{equation}\label{nBmax}
n(t, x) \le e^{Bt}(M + Bt), \qquad \text{ for all $t \ge 0$.}
\end{equation}

\medskip

\noindent \textbf{Verification of minimum bound.} We consider \eqref{maxReqn} and note that $\tilde{W} \ge 0$ by Proposition \ref{nonnegativeW}. Since $\tilde{n} \in B_{R}$, we have $1/R \le \min_{t \in [0, T], x \in \overline{\Omega}} \tilde{n} \le \max_{t \in [0, T], x \in \overline{\Omega}} \tilde{n} \le R$.
Thus, there exists a constant $b > a\mu^{-1}R^{\gamma}$ such that for all $z \in [0, R]$, $\alpha z - \beta z^{1 + \gamma \theta} > -b$. Hence, $\overline{n} := e^{bt}(n + bt)$ satisfies $\partial_{t}\overline{n} - \nabla \overline{n} \cdot \nabla \tilde{W} > 0$, and by a similar argument to the maximum principle, we deduce that $\overline{n}$ cannot attain a strict minimum for $t > 0$. Hence, $\overline{n}(t, x) \ge \min_{x \in \overline{\Omega}} \overline{n}(0, \cdot) = m$, so
\begin{equation}\label{nbmin}
n(t, x) \ge me^{-bt} - bt.
\end{equation}

\medskip

\noindent \textbf{Conclusion of the proof.} Since $R > \max(2m^{-1}, 2M)$ by \eqref{Rchoice}, we can choose $T > 0$ sufficiently small such that $1/R \le me^{-bT} - bT \le e^{BT}(M + BT) \le R$, and then the desired result follows from \eqref{nBmax} and \eqref{nbmin}. Note that the choice of $T > 0$ here depends on the minimum and maximum values $m$ and $M$ of the initial data $n_{0}(x)$.

\end{proof}

Finally, we establish continuity of the map $\Phi$, which will be a direct consequence of the result on continuous dependence for the linearized equation, Proposition \ref{continuousdependence}.

\begin{proposition}
The map $\Phi: B_{R} \to B_{R}$ is a continuous map with respect to the topology of $X := C(0, T; H^{2}(\Omega))$.
\end{proposition}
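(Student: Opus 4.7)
The plan is to apply the continuous dependence result for the linearized continuity equation (Proposition \ref{continuousdependence}) with $m = 2$. Specifically, suppose that $\tilde{n}_k \to \tilde{n}$ in $X = C(0, T; H^{2}(\Omega))$ with $\tilde{n}_k, \tilde{n} \in B_R$, and denote the associated Brinkman potentials by $\tilde{W}_k$ and $\tilde{W}$, the source terms by $F_k := \alpha \tilde{n}_k - \beta \tilde{n}_k^{1 + \gamma\theta}$ and $F := \alpha \tilde{n} - \beta \tilde{n}^{1 + \gamma\theta}$, and the outputs by $n_k := \Phi(\tilde{n}_k)$ and $n := \Phi(\tilde{n})$. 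Proposition \ref{continuousdependence} then yields $n_k \to n$ in $L^{\infty}(0, T; H^{2}(\Omega))$ once we verify: (i) $\tilde{W}_k \to \tilde{W}$ in $L^{\infty}(0, T; H^{4}(\Omega))$; (ii) $F_k \to F$ in $L^{\infty}(0, T; H^{2}(\Omega))$; and (iii) $n \in L^{\infty}(0, T; H^{3}(\Omega))$. Condition (iii) is immediate since $n \in B_R$.

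For (ii), the main technical step is a Nemytskii-type estimate showing that the power map $\tilde{n} \mapsto \tilde{n}^{1 + \gamma\theta}$ is Lipschitz from $B_R$ (with the $X$-topology) into $L^{\infty}(0, T; H^{2}(\Omega))$, with Lipschitz constant depending on $R$, $\gamma$, and $\theta$. To derive this, I would expand $\partial_i \partial_j(\tilde{n}^{1+\gamma\theta})$ via the chain rule as a sum of terms of the form $\tilde{n}^{\gamma\theta - 1} \partial_i \tilde{n}\, \partial_j \tilde{n}$ and $\tilde{n}^{\gamma\theta} \partial_i \partial_j \tilde{n}$ (exactly analogously to the expressions computed in the proof of Proposition \ref{1A}), and then estimate differences by writing each scalar factor as a Lipschitz image of $\tilde{n}_k - \tilde{n}$ times the remaining factors, using the mean value theorem applied to the power functions on the compact interval $[1/R, R]$. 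The Sobolev embedding $H^{2}(\Omega) \hookrightarrow L^{\infty}(\Omega)$ (valid for $d \le 3$) and the uniform $H^{3}$ bound enjoyed by elements of $B_R$ then control the resulting products of derivatives, yielding the desired Lipschitz bound. The linear term $\alpha(\tilde{n}_k - \tilde{n})$ is trivial.

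For (i), I would apply the Calder\'{o}n-Zygmund estimate from Proposition \ref{CZest} to the difference equation
\begin{equation*}
-\mu \Delta(\tilde{W}_k - \tilde{W}) + (\tilde{W}_k - \tilde{W}) = a\bigl(\tilde{n}_k^{\gamma} - \tilde{n}^{\gamma}\bigr),
\end{equation*}
obtaining $\|\tilde{W}_k - \tilde{W}\|_{H^{4}(\Omega)} \le C\bigl(\|\tilde{n}_k^{\gamma} - \tilde{n}^{\gamma}\|_{H^{2}(\Omega)} + \|\tilde{W}_k - \tilde{W}\|_{L^{2}(\Omega)}\bigr)$. The $L^{2}$ remainder is controlled by $\|\tilde{n}_k^{\gamma} - \tilde{n}^{\gamma}\|_{L^{2}(\Omega)}$ via a standard energy argument (test by $\tilde{W}_k - \tilde{W}$ and use coercivity of the left-hand side). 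The convergence $\tilde{n}_k^{\gamma} \to \tilde{n}^{\gamma}$ in $L^{\infty}(0, T; H^{2}(\Omega))$ then follows from the same Nemytskii argument as used for (ii).

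With (i)--(iii) in hand, Proposition \ref{continuousdependence} delivers $n_k \to n$ in $L^{\infty}(0, T; H^{2}(\Omega))$; since elements of $B_R$ are continuous in time into $H^{2}(\Omega)$ (by the Aubin-Lions-type embedding already invoked to show compactness of $B_R$ in $X$), this is precisely convergence in $X$. The main obstacle is the verification of (i) and (ii): the exponents $\gamma$ and $1 + \gamma\theta$ need not be integers, so second derivatives of $\tilde{n}^{\gamma}$ and $\tilde{n}^{1 + \gamma\theta}$ can involve negative powers of $\tilde{n}$, making crucial use of the strict lower bound $\tilde{n} \ge 1/R > 0$ built into the definition of $B_R$, together with the uniform $H^{3}$ control that $B_R$ provides beyond the $H^{2}$ convergence available from the $X$-topology.
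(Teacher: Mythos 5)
Your proposal is correct and takes essentially the same approach as the paper: both reduce the claim to an application of Proposition \ref{continuousdependence} with $m = 2$, verify its hypotheses by showing $\tilde{n}_{k}^{p} \to \tilde{n}^{p}$ in $L^{\infty}(0, T; H^{2}(\Omega))$ via the mean value theorem on the compact interval $[1/R, R]$ together with Sobolev embeddings of $H^{3}$ and $H^{2}$, and handle the potential by the Calder\'{o}n-Zygmund estimates of Propositions \ref{CZest} and \ref{nonnegativeW}. The only cosmetic differences are that you phrase the power-map estimate as a Nemytskii-type Lipschitz bound and spell out the $L^{2}$ energy estimate for the remainder term, whereas the paper invokes the same ingredients more tersely.
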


\begin{proof}
    Suppose $\tilde{n}_{k} \to \tilde{n}$ in $C(0, T; H^{2}(\Omega))$, with $\tilde{n}_{k} \in B_{R}$ for all $k$, $\tilde{n} \in B_{R}$. We want to show that the corresponding solutions $n_{k} \to n$ in $C(0, T; H^{2}(\Omega))$. We observe from Proposition \ref{1A} that $n \in L^{\infty}(0, T; H^{3}(\Omega))$. Therefore, it suffices to show that
    \begin{equation}\label{continuous1}
        \tilde{W}_{k} \to \tilde{W} \ \text{ in } L^{\infty}(0, T; H^{4}(\Omega)), \qquad 
        \alpha \tilde{n}_{k} - \beta \tilde{n}_{k}^{1 + \gamma \theta} \to \alpha \tilde{n} - \beta \tilde{n}^{1 + \gamma \theta}, \ \text{ in } L^{\infty}(0, T; H^{2}(\Omega)).
    \end{equation}
    by Proposition \ref{continuousdependence}. For this purpose, it suffices to verify the claim that:
    \begin{equation}\label{powercontinuous}
    \tilde{n}_{k}^{p} \to \tilde{n}^{p}, \qquad \text{ in } C(0, T; H^{2}(\Omega)) \text{ for any } p \ge 1,
    \end{equation}
    as this will directly imply \eqref{continuous1}, via Calder\'{o}n-Zygmund estimates (Propositions \ref{CZest} and \ref{nonnegativeW}).

    We hence complete the proof by verifying \eqref{powercontinuous}. We note that $\tilde{n}_{k} \to \tilde{n}$ in $C(0, T; C(\overline{\Omega}))$, $\nabla \tilde{n}_{k} \to \nabla \tilde{n}$ in $C(0, T; L^{6}(\Omega))$, and $\nabla^{2} \tilde{n}_{k} \to \nabla^{2} n$ in $C(0, T; L^{2}(\Omega))$ by Sobolev embedding. Therefore, we directly obtain by using the mean-value theorem and the fact that $1/R \le \tilde{n}_{k} \le R$, $1/R \le \tilde{n} \le R$, since $\tilde{n}_{k}, \tilde{n} \in B_{R}$, that $\lim_{k \to \infty} \|\tilde{n}^{p} - \tilde{n}_{k}^{p}\|_{C(0, T; L^{2}(\Omega))} = 0$. Similarly, we calculate $\nabla \tilde{n}_{k}^{p} = p\tilde{n}_{k}^{p - 1} \nabla \tilde{n}_{k}$ and $\partial_{i}\partial_{j} (\tilde{n}_{k}^{p}) = p(p - 1) \tilde{n}^{p - 2}_{k} \partial_{i}\tilde{n}_{k} \partial_{j} \tilde{n}_{k} + p\tilde{n}_{k}^{p - 1} \partial^{2}_{ij} \tilde{n}_{k}$, and hence:
    \begin{equation*}
    \lim_{k \to \infty} \|\nabla(\tilde{n}_{k}^{p} - \tilde{n}^{p})\|_{C(0, T; L^{2}(\Omega))} = 0, \qquad \lim_{k \to \infty} \|\nabla^{2}(\tilde{n}_{k}^{p} - \tilde{n}^{p})\|_{C(0, T; L^{2}(\Omega))} = 0,
    \end{equation*}
    where we use the fact $1/R \le \tilde{n}_{k} \le R$ and $1/R \le \tilde{n} \le R$. For estimating terms such as $|\tilde{n}^{p - 1} - \tilde{n}_{k}^{p - 1}|$ and $|\tilde{n}^{p - 2} - \tilde{n}_{k}^{p - 2}|$, it is important that we have both a uniform lower bound $\tilde{n}_{k}, \tilde{n} \ge 1/R$ and upper bound $\tilde{n}_{k}, \tilde{n} \le R$, which will allow us to use the mean value theorem to estimate these terms, since $p - 1$ and $p - 2$ can be either positive or negative depending on $p$. This completes the proof of \eqref{powercontinuous}, and hence also the proof of the proposition.
\end{proof}

Then, a direct application of the Schauder fixed point argument gives us the following result for local existence, where we emphasize that the time of local existence $T_{loc} > 0$ depends only on the norm $\|n_{0}\|_{H^{3}(\Omega)}$, minimum, and maximum of the initial data.

\begin{proposition}\label{localexistence}
    Let $n_{0}(x) \in C^{\infty}(\overline{\Omega})$, and define $m := \min_{x \in \overline{\Omega}} n_{0}(x)$ and $M := \max_{x \in \overline{\Omega}} n_{0}(x)$,     where we assume that $m > 0$. Then, there exists a time $T_{loc} > 0$ depending only on $\|n_{0}\|_{H^{3}(\Omega)}$, $m$, and $M$, such that there exists a unique strong solution $(n, W)$, with $n \in C(0, T_{loc}; H^{2}(\Omega))$, $\partial_{t}n \in C(0, T_{loc}; H^{1}(\Omega))$, and $W \in C(0, T_{loc}; H^{4}(\Omega))$. 
\end{proposition}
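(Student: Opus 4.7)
The plan is to assemble the pieces constructed in Section \ref{fixedpointsec} via the Schauder fixed point theorem. Concretely, I would fix $R$ as in \eqref{Rchoice} (depending only on $\|n_{0}\|_{H^{3}(\Omega)}$, $m$, and $M$), and then select $T_{loc} > 0$ sufficiently small so that the estimates of Propositions \ref{1A} and \ref{1B} are simultaneously active, ensuring $\Phi(B_R) \subset B_R$. Tracking the dependencies in the quantitative inequalities \eqref{H3depend} and \eqref{H3depend2} shows that this smallness of $T_{loc}$ depends only on $R$, and hence ultimately only on $\|n_{0}\|_{H^{3}(\Omega)}$, $m$, and $M$.

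Next, I would verify the hypotheses of Schauder's fixed point theorem in the Banach space $X = C(0, T_{loc}; H^{2}(\Omega))$. The set $B_R$ is convex and closed in $X$ by direct inspection of \eqref{BRdef}. The uniform bounds $\|n\|_{L^{\infty}(0, T_{loc}; H^{3}(\Omega))} \le R$ and $\|\partial_t n\|_{L^{2}(0, T_{loc}; H^{2}(\Omega))} \le R$ for $n \in B_R$, together with the compact embedding $H^{3}(\Omega) \subset\subset H^{2}(\Omega)$, yield compactness of $B_R$ in $X$ via the Aubin--Lions lemma, as already noted in Section \ref{fixedpointsec}. Continuity of $\Phi$ in the topology of $X$ is the content of the preceding proposition. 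Schauder's theorem therefore produces a fixed point $n \in B_R$ with $\Phi(n) = n$.

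Finally, I would read off from the construction of $\Phi$ through \eqref{phidef1}--\eqref{phidef2} that this fixed point furnishes the desired strong solution $(n, W)$. Letting $W$ be the unique Brinkman solution with source $a n^{\gamma}$, Proposition \ref{nonnegativeW} together with Calder\'on--Zygmund estimates applied to $a n^{\gamma} \in C(0, T_{loc}; H^{2}(\Omega))$ (which holds because $n \in C(0, T_{loc}; H^{2}(\Omega))$ is pointwise bounded by $R$) give $W \in C(0, T_{loc}; H^{4}(\Omega))$. The identity $\Phi(n) = n$ then yields the continuity equation $\partial_t n - \text{div}(n \nabla W) = \alpha n - \beta n^{1 + \gamma \theta}$, and reading off $\partial_t n = \nabla n \cdot \nabla W + n \Delta W + \alpha n - \beta n^{1 + \gamma \theta}$ using that $H^{2}(\Omega)$ is an algebra in dimensions $d = 2, 3$ provides $\partial_t n \in C(0, T_{loc}; H^{1}(\Omega))$. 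Uniqueness of this solution in the stated class is already established at the end of Section \ref{apriori}.

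The main subtlety I anticipate is to verify that the smallness requirement on $T_{loc}$ depends only on the advertised quantities and not on higher Sobolev norms of $n_{0}$. This is precisely why the definition of $R$ in \eqref{Rchoice} absorbs $\|n_{0}\|_{H^{3}(\Omega)}$ up front through its first term, so that in \eqref{H3depend} the contribution $C_{0} \|n_{0}\|_{H^{3}(\Omega)} \le R/2$ is accounted for without a factor of $T_{loc}$, while every remaining term in both \eqref{H3depend} and \eqref{H3depend2} carries a positive power of $T_{loc}$ and can therefore be forced below $R/2$ by shrinking $T_{loc}$. The corresponding minimum/maximum bounds in Proposition \ref{1B} depend only on $m$, $M$, and $R$ through the quantities $B$ and $b$ in its proof, so the final $T_{loc}$ depends only on $\|n_{0}\|_{H^{3}(\Omega)}$, $m$, and $M$, as claimed.
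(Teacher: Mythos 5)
Your proposal is correct and follows the same route the paper takes: Schauder's fixed point theorem on the set $B_R$ using Propositions \ref{1A}, \ref{1B}, and the continuity of $\Phi$, followed by reading off the regularity of $W$ from Calder\'on--Zygmund and uniqueness from Section \ref{apriori}. The paper does not write out an explicit proof of Proposition \ref{localexistence} (it states the result as a ``direct application'' after the supporting propositions), and your writeup supplies exactly the assembly the paper leaves implicit, including the careful tracking of how $T_{loc}$ depends only on $\|n_0\|_{H^3(\Omega)}$, $m$, and $M$ through $R$; the one tiny point to note is that for $a n^{\gamma} \in C(0,T_{loc};H^2(\Omega))$ when $\gamma < 2$ you also use the lower bound $n \geq 1/R$ from $B_R$, not just the upper bound.
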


\section{Global existence of strong solutions}\label{globalsec}

In this section, we use the local existence result in Proposition \ref{localexistence} to prove global existence of unique smooth solutions for smooth initial data $n_{0}(x)$ with sufficiently small $L^{6}(\Omega)$ norm. We note that the choice of $L^{6}(\Omega)$ here is mostly for concreteness. We obtain global a priori estimates which show that a solution exists for all time, not just locally in time, which will allow us to prolong the time of existence of the solution. 

To do this, we show a first a priori estimate where if the solution is sufficiently far away from vacuum (namely $n_{0}(x) \ge \eta - \epsilon$ for $\eta$ defined in Proposition \ref{minprinciple} and $\epsilon > 0$ sufficiently small), the estimate for the first derivative norms is dissipative (see Proposition \ref{W1pcontrol}), which gives global control of the norms of the gradient. This is in the spirit of results on global well-posedness for damped 3D compressible Euler equations for example, see Theorems 5.1 and 5.2 in \cite{Sideris}. 

In the case where the smooth positive initial data $n_{0}(x)$ takes values below $\eta - \epsilon$ (so that the dissipative a priori estimate in Proposition \ref{W1pcontrol} does not directly apply), we show using a more general (non-dissipative) a priori estimate in Proposition \ref{W1pgeneral} that the solution exists on a time interval $[0, T]$ for some $T > 0$. By the fact that the minimum value increases according to the minimum principle involving $\overline{n}(t)$ in Proposition \ref{minprinciple}, we can choose $T$ so that at time $T$, $n(T, \cdot)$ is bounded below by $\eta - \epsilon$, which then allows us to apply the a priori bound in Theorem \ref{W1pcontrol} to obtain global existence. Thus, the a priori bounds in Proposition \ref{W1pcontrol} and Proposition \ref{W1pgeneral} will be the main focus of this section, as they will naturally lead to a proof of global existence for initial data with small norm of the gradient in Theorem \ref{global}.

\subsection{Global a priori estimates for positive initial data}\label{globalpositive}

In this section, assume that the initial data satisfies $n_{0}(x) \ge m > 0$, for some positive constant $m := \min_{x \in \overline{\Omega}} n_{0}(x)$. We already have local existence of $n \in C(0, T; H^{2}(\Omega))$ and $W \in C(0, T; H^{4}(\Omega))$ for a sufficiently small $T > 0$. We want to obtain global estimates on the norms of $n$, with the goal of proving Theorem \ref{global}. We accomplish this by establishing the following result, where we recall that $d$ is the spatial dimension $d = 2, 3$.

\begin{proposition}[Dissipative estimate of $L^{p}$ gradient norm for small data]\label{W1pcontrol} 
Suppose that $(n, W)$ with regularity $n \in C(0, T; W^{1, p}(\Omega))$, $\partial_{t}n \in C(0, T; W^{1, p}(\Omega))$, and $W \in C(0, T; W^{3, p}(\Omega))$ for some fixed even positive integer $p > d$, is a strong solution to \eqref{equations} for smooth initial data $n_{0}(x) > 0$, with $\min_{(x, t) \in [0, T] \times \overline{\Omega}} n(t, x) \ge \eta - \epsilon$ where $\epsilon > 0$ is any positive number for which
\begin{equation}\label{epsilondef}
\alpha - \beta n^{\gamma \theta} - a\mu^{-1} n^{\gamma} \le \frac{\alpha}{4}\min(\gamma \theta, 1) \qquad \text{ for all } n \ge \eta - \epsilon,
\end{equation}
and $\eta$ is defined in \eqref{nlowerstar} as the unique solution to $\alpha - \beta n^{\gamma \theta} - a\mu^{-1}n^{\gamma} = 0$. We also define $\max_{[0, T] \times \overline{\Omega}} n(t, x) := n_{max, T}$. Then, there exists a constant $C(n_{max, T}, a, \gamma)$ depending only on those parameters listed, such that if $\delta$ and $\mu > 0$ satisfy
\begin{equation*}
C(n_{max, T}, a, \gamma) (\delta + \mu^{-1}) \le \frac{\alpha}{8}\min(\gamma \theta, 1),
\end{equation*}
and we have the a priori bound that
\begin{equation}\label{aprioridelta}
\left(\sum_{j = 1}^{d} \|\partial_{i} n\|_{C(0, T; L^{p}(\Omega))}^{p}\right)^{1/p} < \delta,
\end{equation}
then the following inequality holds for all $t \in [0, T]$ and $1 \le j \le d$:
\begin{equation*}
\sum_{j = 1}^{d} \int_{\Omega} (n_{j}(t))^{p} + \frac{\alpha}{8} \min(\gamma \theta, 1) p \sum_{j = 1}^{d} \int_{0}^{t} \int_{\Omega} (n_{j}(s))^{p} \le \sum_{j = 1}^{d} \int_{\Omega} n_{0, j}^{p}.
\end{equation*}
\end{proposition}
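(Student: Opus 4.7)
The plan is to derive the dissipative differential inequality
\[
\frac{d}{dt}\sum_{j=1}^d \int_\Omega n_j^p + \frac{\alpha p \min(\gamma\theta,1)}{8} \sum_{j=1}^d \int_\Omega n_j^p \le 0
\]
and integrate in time. First I would differentiate the continuity equation in \eqref{equations} with respect to $x_j$, multiply by $p n_j^{p-1}$ (admissible since $p$ is even, so $n_j^p \ge 0$), and integrate over $\Omega$; the transport term integrates by parts into $\int n_j^p \Delta W$ via the Neumann condition on $W$. Substituting the Brinkman identities $\Delta W = \mu^{-1}(W - an^\gamma)$ and $(\Delta W)_j = \mu^{-1}(W_j - a\gamma n^{\gamma-1}n_j)$ yields an identity of the form $\partial_t \int n_j^p + D_j = B_j$, where
\[
D_j = \int \bigl[p\beta(1+\gamma\theta)n^{\gamma\theta} + a(p\gamma + p - 1)\mu^{-1}n^\gamma - p\alpha\bigr]\, n_j^p
\]
collects the dissipative contributions, and $B_j = (p-1)\mu^{-1}\int n_j^p W + p\int n_j^{p-1} \sum_i \partial_i n\,\partial_i W_j + p\mu^{-1}\int n \, n_j^{p-1} W_j$ collects three ``bad'' cross terms.

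The central algebraic step is the lower bound $D_j \ge p\tfrac{\alpha}{4}\min(\gamma\theta,1)\int n_j^p$. I would rewrite the pointwise coefficient of $n_j^p$ as $p\bigl[(\beta n^{\gamma\theta} + a\mu^{-1}n^\gamma - \alpha) + \gamma\theta\beta n^{\gamma\theta} + a(\gamma - 1/p)\mu^{-1}n^\gamma\bigr]$, apply \eqref{epsilondef} to bound the first bracket below by $-\tfrac{\alpha}{4}\min(\gamma\theta,1)$, and invoke the elementary inequality $\min(\gamma\theta, \gamma - 1/p) \ge \tfrac{3}{4}\min(\gamma\theta,1)$ (valid for $p \ge 4$ and $\gamma \ge 1$) together with \eqref{epsilondef} again to bound the remaining two terms below by $\tfrac{3}{4}\min(\gamma\theta,1) \cdot \alpha(1 - \tfrac{1}{4}\min(\gamma\theta,1))$; a short arithmetic step then yields the claim.

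Next I would bound each component of $B_j$ and show $\sum_j |B_j| \le C(n_{max,T}, a, \gamma)(\delta + \mu^{-1}) \|\nabla n\|_{L^p}^p$. The first term is handled directly by the pointwise bound $\|W\|_{L^\infty} \le a n_{max,T}^\gamma$ from Proposition~\ref{Wbound}, producing a clean $\mu^{-1}$ factor. For the third term, testing the differentiated Brinkman equation $-\mu\Delta W_j + W_j = a\gamma n^{\gamma-1}n_j$ by $|W_j|^{p-2}W_j$ gives $\|\nabla W\|_{L^p} \le C(n_{max,T}, a, \gamma) \|\nabla n\|_{L^p}$, and H\"{o}lder's inequality then produces a bound of the form $C\mu^{-1}\|\nabla n\|_{L^p}^p$. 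The middle term is the most delicate: I would combine Calder\'{o}n-Zygmund estimates applied to the differentiated Brinkman equation to obtain $\|\nabla^2 W\|_{L^p} \le C(n_{max,T}, a, \gamma) \|\nabla n\|_{L^p}$, use Proposition~\ref{nonnegativeW} to control $\|\nabla^3 W\|_{L^p}$, and apply Gagliardo--Nirenberg interpolation to derive $\|\nabla^2 W\|_{L^\infty} \le C\delta^{1-d/p}$, yielding the required smallness factor for this cross term.

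Combining the dissipation lower bound with the bad-term bounds, summing over $j$, and using the smallness hypothesis $C(n_{max,T},a,\gamma)(\delta + \mu^{-1}) \le \tfrac{\alpha}{8}\min(\gamma\theta,1)$ to absorb $B$ into half of the dissipation yields the desired differential inequality, and integrating over $[0, t]$ produces the claimed estimate. The main obstacle I anticipate is the careful analysis of the middle cross term: extracting a smallness factor from $\|\nabla^2 W\|_{L^\infty}$ requires a delicate interplay of Calder\'{o}n-Zygmund regularity, interpolation, and the a priori smallness $\|\nabla n\|_{L^p} \le \delta$, since unlike the Laplacian $\Delta W = \mu^{-1}(W - an^\gamma)$, which directly inherits a $\mu^{-1}$ factor from the Brinkman equation, the full Hessian $\nabla^2 W$ requires this additional interpolation argument.
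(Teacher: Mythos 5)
Your decomposition into a dissipative part $D_j$ and a bad part $B_j$ is exactly the right one, and your arithmetic for the lower bound $D_j \ge p\tfrac{\alpha}{4}\min(\gamma\theta,1)\int n_j^p$ is a valid variant of what the paper does: both approaches use \eqref{epsilondef} twice together with an elementary $p\ge 4$, $\gamma\ge 1$ bookkeeping step, and your coefficient $p\beta(1+\gamma\theta)n^{\gamma\theta}+a(p\gamma+p-1)\mu^{-1}n^\gamma-p\alpha$ matches the paper's \eqref{simplifyfirst}. Your treatment of $(p-1)\mu^{-1}\int n_j^p W$ via Proposition~\ref{Wbound} also agrees with the paper.

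The gap is in how you propose to control $\|\nabla W\|_{L^p}$ and $\|\nabla^2 W\|_{L^\infty}$ in terms of $\|\nabla n\|_{L^p}$. Both your test-function argument on $-\mu\Delta W_j + W_j = a\gamma n^{\gamma-1}n_j$ and your Calder\'on--Zygmund claim $\|\nabla^2 W\|_{L^p}\le C\|\nabla n\|_{L^p}$ are applied to the differentiated Brinkman equation, but $W_j:=\partial_j W$ does not inherit the Neumann boundary condition from $W$: when you integrate $-\mu\int\Delta W_j\,|W_j|^{p-2}W_j$ by parts you pick up a boundary term $\mu\int_{\partial\Omega}(\partial_\nu W_j)|W_j|^{p-2}W_j$ that is not controlled by the available data, and the Calder\'on--Zygmund estimate of Proposition~\ref{CZest} is stated for the Neumann problem. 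Applying CZ instead to the undifferentiated equation gives $\|W\|_{W^{3,p}}\le C(\|an^\gamma\|_{W^{1,p}}+\|W\|_{L^p})$, which is $O(1)$ rather than $O(\|\nabla n\|_{L^p})$. The paper circumvents this with Lemma~\ref{dissipativelemma}: subtract the constant $W_{avg}=a(n_{avg})^\gamma$, note that $W-W_{avg}$ still satisfies Neumann boundary conditions and $n-n_{avg}$ has zero mean, apply Poincar\'e to remove the lower-order term, and then Calder\'on--Zygmund (via Propositions~\ref{CZest} and~\ref{nonnegativeW}) gives the $\mu$-independent bound $\|\nabla W\|_{W^{2,p}}\le C(n_{max,T},a,\gamma)\|\nabla n\|_{L^p}$. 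Combined with the Sobolev embedding $W^{2,4}\hookrightarrow W^{1,\infty}$, this yields $\|\nabla^2 W\|_{L^\infty}\le C\|\nabla n\|_{L^p}$ and hence a \emph{linear} $\delta$ factor once the a priori bound \eqref{aprioridelta} and Young's inequality are used on $\|n_i\|_{L^p}\|n_j\|_{L^p}^{p-1}\|\nabla W\|_{W^{1,\infty}}$. Your interpolation route, even if the $\|\nabla^2 W\|_{L^p}$ step could be repaired, would only produce $\delta^{1-d/p}$, which does not match the $(\delta+\mu^{-1})$ smallness threshold asserted in the proposition. You should replace the differentiated-equation estimates with the mean-subtraction argument of Lemma~\ref{dissipativelemma}.
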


\medskip

Before proving this result, we show the following technical bound.

\begin{lemma}\label{dissipativelemma}
Suppose that a strictly positive $n \in C(0, T; W^{1, p}(\Omega))$ for a positive even integer $p > d$, with an upper bound $\max_{[0, T] \times \overline{\Omega}} n(t, x) := n_{max, T}$, satisfies 
\begin{equation*}
-\mu \Delta W + W = an^{\gamma} \text{ on } \Omega, \qquad \nabla W \cdot \bd{n}|_{\partial \Omega} = 0.
\end{equation*}
Then,
\begin{equation}\label{gradp}
\|\nabla W\|_{W^{2, p}(\Omega)} \le C(p, n_{max, T}, a, \gamma) \left(\sum_{j = 1}^{d} \|\partial_{j}n\|^{p}_{L^{p}(\Omega)}\right)^{1/p},
\end{equation}
\begin{equation}\label{gradLip}
\|\nabla W\|_{W^{1, \infty}(\Omega)} \le C(n_{max, T}, a, \gamma) \left(\sum_{j = 1}^{d} \|\partial_{j} n\|_{L^{p}(\Omega)}^{p}\right)^{1/p}.
\end{equation}
\end{lemma}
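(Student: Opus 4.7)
The plan is as follows. Since $\nabla W$ is invariant under shifts of $W$ by a constant, I would shift so that the Brinkman source becomes mean-zero, which allows the Poincar\'e--Wirtinger inequality to convert $L^p$ bounds on the source to $L^p$ bounds on its gradient---and $\nabla(an^\gamma) = a\gamma n^{\gamma - 1}\nabla n$ is controlled by $\|\nabla n\|_{L^p}$ given the upper bound $n \le n_{max, T}$ and $\gamma \ge 1$. Concretely, I would set $\overline{f} := \frac{1}{|\Omega|}\int_\Omega an^\gamma$, $\tilde{f} := an^\gamma - \overline{f}$, and $\tilde{W} := W - \overline{f}$, so that $\tilde{W}$ solves the Neumann Brinkman problem $-\mu \Delta \tilde{W} + \tilde{W} = \tilde{f}$ with $\int_\Omega \tilde{f} = 0$ and $\nabla W = \nabla \tilde{W}$. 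Testing by $|\tilde{W}|^{p - 2}\tilde{W}$ exactly as in \eqref{WH1} and using Young's inequality (with $p$ an even integer) yields $\|\tilde{W}\|_{L^p(\Omega)} \le \|\tilde{f}\|_{L^p(\Omega)}$. Because $\tilde{f}$ has mean zero, the Poincar\'e--Wirtinger inequality combined with the chain-rule bound $|\nabla(an^\gamma)| \le a\gamma n_{max, T}^{\gamma - 1}|\nabla n|$ then gives $\|\tilde{f}\|_{L^p(\Omega)} \le C_p\,a\gamma n_{max, T}^{\gamma - 1}\bigl(\sum_{j = 1}^d \|\partial_j n\|_{L^p(\Omega)}^p\bigr)^{1/p}$.

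With these bounds in hand, I would invoke the Calder\'on--Zygmund estimate of Proposition \ref{CZest} with $m = 1$ applied to $\tilde{W}$:
$$\|\tilde{W}\|_{W^{3, p}(\Omega)} \le C\bigl(\|\tilde{f}\|_{W^{1, p}(\Omega)} + \|\tilde{W}\|_{L^p(\Omega)}\bigr),$$
bounding $\|\tilde{f}\|_{W^{1, p}(\Omega)}$ by combining the preceding $L^p$ estimate on $\tilde{f}$ with the same pointwise control on $\nabla(an^\gamma)$. Substituting then yields
$$\|\nabla W\|_{W^{2, p}(\Omega)} = \|\nabla \tilde{W}\|_{W^{2, p}(\Omega)} \le \|\tilde{W}\|_{W^{3, p}(\Omega)} \le C(p, n_{max, T}, a, \gamma)\Bigl(\sum_{j = 1}^d \|\partial_j n\|_{L^p(\Omega)}^p\Bigr)^{1/p},$$
which is \eqref{gradp}. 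The Lipschitz-type bound \eqref{gradLip} then follows at once from the Sobolev embedding $W^{2, p}(\Omega) \hookrightarrow W^{1, \infty}(\Omega)$, valid for $p > d$, applied to the vector field $\nabla W$.

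The only delicate point is recognizing why the mean-zero shift is essential: a direct Calder\'on--Zygmund estimate applied to $W$ itself would only yield a bound of the form $\|\nabla W\|_{W^{2, p}} \le C(\|an^\gamma\|_{W^{1, p}} + \|W\|_{L^p})$, producing terms of order $n_{max, T}^\gamma$ rather than of order $\|\nabla n\|_{L^p}$---which would be inadequate for driving the dissipative gradient estimate used in Proposition \ref{W1pcontrol}. Converting the right-hand side into a genuine gradient quantity via Poincar\'e--Wirtinger is the crux of the lemma, after which the remainder is a routine combination of the elliptic regularity of Proposition \ref{CZest} and Sobolev embedding.
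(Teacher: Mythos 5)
Your proposal is correct and follows essentially the same route as the paper: subtract a constant so that $\nabla W$ is unchanged, use Poincar\'e--Wirtinger to turn the right-hand side into a genuine gradient quantity, then close via the Calder\'on--Zygmund estimate and Sobolev embedding. The only (cosmetic) difference is the choice of constant: you subtract $\overline{f} = \frac{1}{|\Omega|}\int_\Omega an^\gamma$ so the source is itself mean-zero and Poincar\'e applies to $\tilde f$ directly, whereas the paper subtracts $W_{avg} = a(n_{avg})^\gamma$ with $n_{avg} = \frac{1}{|\Omega|}\int_\Omega n$, bounds $\|an^\gamma - an_{avg}^\gamma\|_{W^{1,p}}$ by $\|n - n_{avg}\|_{W^{1,p}}$ via the mean value theorem, and then applies Poincar\'e to the mean-zero function $n - n_{avg}$; both choices yield the same constant structure $C(p, n_{max,T}, a, \gamma)$.
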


\begin{proof}
Let $n_{avg} = \displaystyle \frac{1}{|\Omega|} \int_{\Omega} n(t, x) dx$, and note that for $W_{avg} = a(n_{avg})^{\gamma}$,
\begin{equation*}
-\mu \Delta (W - W_{avg}) + (W - W_{avg}) = an^{\gamma} - an_{avg}^{\gamma} \text{ on } \Omega, \qquad \nabla (W - W_{avg}) \cdot \bd{n}|_{\partial \Omega} = 0.
\end{equation*}
Since $n - n_{avg}$ has mean zero, by Poincar\'{e}'s inequality:
\begin{equation*}
\|n - n_{avg}\|_{W^{1, p}(\Omega)} \le \|\nabla(n - n_{avg})\|_{L^{p}(\Omega)} = \|\nabla n \|_{L^{p}(\Omega)},
\end{equation*}
and hence by Calder\'{o}n-Zygmund estimates (Propositions \ref{CZest} and \ref{nonnegativeW}), 
\begin{align*}
\|W - W_{avg}\|_{W^{3, p}(\Omega)} \le C_{p} \|an^{\gamma} - an^{\gamma}_{avg}\|_{W^{1, p}(\Omega)} &\le C(p, n_{max, T}, a, \gamma) \|n - n_{avg}\|_{W^{1, p}(\Omega)} \\
&\le C(p, n_{max, T}, a, \gamma) \|\nabla n\|_{L^{p}(\Omega)}.
\end{align*}
Since $W_{avg}$ is a constant, we directly obtain that $\nabla W = \nabla (W - W_{avg})$ and hence:
\begin{equation*}
\|\nabla W\|_{W^{2, p}(\Omega)} \le C(p, n_{max, T}, a, \gamma) \|\nabla n\|_{L^{p}(\Omega)}.
\end{equation*}
To obtain the estimate \eqref{gradLip} and eliminate the dependence on $p$, note that $p \ge 4$ since it is even and $p > d$. Hence, we can use a specific $L^{4}(\Omega)$ Calder\'{o}n-Zygmund constant and obtain:
\begin{equation*}
\|\nabla W\|_{W^{1, \infty}(\Omega)} \le C\|\nabla W\|_{W^{2, 4}(\Omega)} \le C(n_{max, T}, a, \gamma) \|\nabla n\|_{L^{4}(\Omega)} \le C(n_{max, T}, a, \gamma) \|\nabla n\|_{L^{p}(\Omega)}.
\end{equation*}
\end{proof}

Now, we have all of the ingredients needed to prove the essential a priori estimate in Proposition \ref{W1pcontrol}.

\begin{proof}
The proof will follow from \textit{a priori estimates} on the first spatial derivatives of $n$. We note that for convenience, we assume that $p > d$ is also an even positive integer so that $x^{p}$ is always a nonnegative quantity for any $x \in \R$.

\medskip

\noindent \textbf{Estimate on the gradient of the density.} Taking the gradient of the equation for the evolution of the density and the potential, we obtain
\begin{equation}\label{ni}
\partial_{t}n_{j} - \sum_{i = 1}^{d} \Big(n_{ij}W_{i} + n_{i}W_{ij} + n_{j}W_{ii} + nW_{iij}\Big) = \alpha n_{j} - \beta(\gamma \theta + 1)n^{\gamma \theta} n_{j},
\end{equation}
\begin{equation}\label{Wiij}
\sum_{i = 1}^{d} W_{iij} = \mu^{-1}\Big(W_{j} - a\gamma n^{\gamma - 1}n_{j}\Big).
\end{equation}
We test \eqref{ni} by $p n_{j}^{p - 1}$ for $1 \le p < \infty$, and we estimate the right-hand side of the result:
\begin{multline}\label{gradequality}
\int_{\Omega} (n_{j}(t))^{p} = \int_{\Omega} n_{0, j}^{p} + p\sum_{i = 1}^{d}\int_{0}^{t} \int_{\Omega} \Big(n_{ij}n_{j}^{p - 1} W_{i} + n_{i}n_{j}^{p - 1}W_{ij} + n_{j}^{p} W_{ii} + nn_{j}^{p - 1} W_{iij}\Big) \\
+ \alpha p \int_{0}^{t} \int_{\Omega} (n_{j}(s))^{p} - \beta (\gamma \theta + 1) p \int_{0}^{t} \int_{\Omega} n^{\gamma\theta} n_{j}^{p}.
\end{multline}
We rewrite some of the terms on the right-hand side as follows, recalling the assumption on the choice of $\epsilon$ in \eqref{epsilondef}.

\medskip

\noindent \textbf{Terms 2 and 4.} By integration by parts, $\displaystyle p\sum_{i = 1}^{d} \int_{0}^{t} \int_{\Omega} n_{ij}n_{j}^{p - 1} W_{i} = - \int_{0}^{t} \int_{\Omega} n_{j}^{p} \Delta W$. Hence, combining this with the fourth term and using \eqref{equations}, we obtain:
    \begin{align*}
    p\sum_{i = 1}^{d} \int_{0}^{t} \int_{\Omega} \Big(n_{ij} n_{i}^{p - 1} W_{i} + n_{j}^{p}W_{ii}\Big) &= (p - 1) \int_{0}^{t} \int_{\Omega} n_{j}^{p} \Delta W \\
    &= \mu^{-1}(p - 1) \int_{0}^{t} \int_{\Omega} n_{j}^{p} W - a\mu^{-1}(p - 1) \int_{0}^{t} \int_{\Omega} n_{j}^{p} n^{\gamma}.
    \end{align*}

\medskip

\noindent \textbf{Term 5.} We use the equation for $W_{iij}$ in \eqref{Wiij} to conclude that
    \begin{equation*}
    p \sum_{i = 1}^{d} \int_{0}^{t} \int_{\Omega} nn_{j}^{p - 1} W_{iij} = \mu^{-1}p \int_{0}^{t} \int_{\Omega} nn_{j}^{p - 1} W_{j} - a\gamma \mu^{-1} p \int_{0}^{t} \int_{\Omega} n^{\gamma} n_{j}^{p}.
    \end{equation*}

\medskip

Hence, we can rewrite \eqref{gradequality} as

\begin{multline}\label{simplifyfirst}
\int_{\Omega} (n_{j}(t))^{p} = \int_{\Omega} n_{0, j}^{p} + p\sum_{i = 1}^{d} \int_{0}^{t} \int_{\Omega} n_{i}n_{j}^{p - 1} W_{ij} + p\int_{0}^{t} \int_{\Omega} \Big(\alpha - \beta n^{\gamma \theta} - a\gamma \mu^{-1}n^{\gamma}\Big) n_{j}^{p} \\
+ \mu^{-1}(p - 1) \int_{0}^{t} \int_{\Omega} n_{j}^{p}W + \mu^{-1}p \int_{0}^{t} \int_{\Omega} nn^{p - 1}_{j}W_{j} - a\mu^{-1}(p - 1) \int_{0}^{t} \int_{\Omega} n^{\gamma} n_{j}^{p} - \beta\gamma \theta p \int_{0}^{t} \int_{\Omega} n^{\gamma \theta} n^{p}_{j}.
\end{multline}
Using the assumption on $\epsilon$ in \eqref{epsilondef}, the fact that $n \ge \eta - \epsilon$ for all $t \in [0, T]$ and $x \in \overline{\Omega}$, and the fact that $p > d \ge 2$ is even and hence $\displaystyle \frac{2p}{3} < p - 1$, we estimate:
\begin{equation*}
p \int_{0}^{t} \int_{\Omega} \left(\alpha - \beta n^{\gamma \theta} - a\gamma \mu^{-1}n^{\gamma}\right)n^{p}_{j} \le \frac{\alpha}{4}\min(\gamma \theta, 1)p \int_{0}^{t} \int_{\Omega} n_{j}^{p} \qquad \text{(since $\gamma \ge 1$)},
\end{equation*}
\begin{align*}
-a\mu^{-1}(p - 1) \int_{0}^{t} \int_{\Omega} n^{\gamma} n_{j}^{p} - \beta \gamma \theta p \int_{0}^{t} \int_{\Omega} n^{\gamma \theta} n_{j}^{p} &\le \frac{2}{3}\min(\gamma \theta, 1) p \int_{0}^{t} \int_{\Omega} (-a\mu^{-1}n^{\gamma} - \beta n^{\gamma \theta})n^{p}_{j} \\
&\le -\frac{\alpha}{2}\min(\gamma \theta, 1) p \int_{0}^{t} \int_{\Omega} n^{p}_{j}.
\end{align*}
We thus obtain from \eqref{simplifyfirst} the following, after summing over $j = 1$ to $j = d$:
\begin{multline}\label{apriorimid}
\sum_{j = 1}^{d} \int_{\Omega} (n_{j}(t))^{p} \le \sum_{j = 1}^{d} \int_{\Omega} n_{0, j}^{p} + p \sum_{i, j = 1}^{d} \int_{0}^{t} \int_{\Omega} n_{i}n_{j}^{p - 1} W_{ij} + \mu^{-1}(p - 1) \sum_{j = 1}^{d} \int_{0}^{t} \int_{\Omega} n_{j}^{p}W \\
+ \mu^{-1} p \sum_{j = 1}^{d} \int_{0}^{t} \int_{\Omega} nn_{j}^{p - 1} W_{j} - \frac{\alpha}{4} \min(\gamma \theta, 1) p \sum_{j = 1}^{d} \int_{0}^{t} \int_{\Omega} n_{j}^{p}.
\end{multline}
\if 1 = 0
We estimate by Calder\'{o}n-Zygmund estimates (as in Proposition \ref{nonnegativeW}) applied to
\begin{equation}\label{ellipticder}
-\mu \Delta \partial_{j}W + \partial_{j}W = a\gamma n^{\gamma - 1} \partial_{j}n
\end{equation}
that for a constant $C(n_{max, T}, a, \gamma)$:
\begin{align}\label{Wjestimate}
\|W_{j}(t, \cdot)\|_{W^{1, \infty}(\Omega)} &\le C\|W_{j}(t, \cdot)\|_{W^{2, 4}(\Omega)} \nonumber \\
&\le C(n_{max, T}, a, \gamma) \Big(\|n_{j}(t, \cdot)\|_{L^{4}(\Omega)} + \|W_{j}(t, \cdot)\|_{L^{4}(\Omega)}\Big) \nonumber \\
&\le C(n_{max, T}, a, \gamma)\|n_{j}(t, \cdot)\|_{L^{p}(\Omega)},
\end{align}
since $p > d$ is an even integer, and hence $p \ge 4$. (Note that we can estimate $\|W_{j}(t, \cdot)\|_{L^{4}(\Omega)} \le C(n_{max, T}, a, \gamma) \|n_{j}(t, \cdot)\|_{L^{p}(\Omega)}$ by testing \eqref{ellipticder} with $(\partial_{j}W)^{3}$ and proceeding as in Proposition \ref{nonnegativeW}).
\fi

Using (1) the a priori bound in \eqref{aprioridelta} to estimate $\|n_{i}\|_{C(0, T; L^{p}(\Omega))}$ in the term involving $n_{i}n_{j}^{p - 1} W_{ij}$, (2) the estimate \eqref{gradLip} in Lemma \ref{dissipativelemma}, and (3) $0 \le n \le n_{max, T}$ and hence $W \le C(n_{max, T})$ by Proposition \ref{CZest} and \ref{nonnegativeW}, we conclude:
\begin{align*}
\Bigg|p\sum_{i, j = 1}^{d} &\int_{0}^{t} \int_{\Omega} n_{i}n_{j}^{p - 1} W_{ij} + \mu^{-1}(p - 1) \sum_{j = 1}^{d} \int_{0}^{t} \int_{\Omega} n_{j}^{p}W + \mu^{-1}p \sum_{j = 1}^{d} \int_{0}^{t} \int_{\Omega} nn^{p - 1}_{j}W_{j}\Bigg| \\
&\le p \left(\sum_{i, j = 1}^{d} \int_{0}^{t} \|n_{i}\|_{L^{p}(\Omega)} \|n_{j}\|_{L^{p}(\Omega)}^{p - 1} \|\nabla W\|_{W^{1, \infty}(\Omega)}\right) \\
&\qquad \qquad + \mu^{-1}p \sum_{j = 1}^{d} \int_{0}^{t} \Big(\|W\|_{L^{\infty}(\Omega)} \|n_{j}\|_{L^{p}(\Omega)}^{p} + \|n\|_{L^{p}(\Omega)}\|n_{j}\|_{L^{p}(\Omega)}^{p - 1} \|\nabla W\|_{L^{\infty}(\Omega)}\Big) \\
&\le C(n_{max, T}, a, \gamma) p(\delta + \mu^{-1}) \sum_{j = 1}^{d} \int_{0}^{t} \int_{\Omega} n_{j}^{p}.
\end{align*}
So assuming that $\delta$ is sufficiently small and $\mu$ is sufficiently large so that the resulting constant $\displaystyle C(n_{max, T}, a, \gamma) (\delta + \mu^{-1}) \le \frac{\alpha}{8}\min(\gamma \theta, 1)$, we conclude from \eqref{apriorimid} that for every $1 \le j \le d$:
\begin{equation*}
\sum_{j = 1}^{d} \int_{\Omega} (n_{j}(t))^{p} + \frac{\alpha}{8}\min(\gamma \theta, 1) p \int_{0}^{t} \left(\sum_{j = 1}^{d} \int_{\Omega} (n_{j}(s))^{p} \right) ds \le \sum_{j = 1}^{d} \int_{\Omega} n_{0, j}^{p}.
\end{equation*}

\end{proof}

As a direct consequence of the previous estimates, we can obtain a less refined (non-dissipative) estimate on the $L^{p}$ norm of the gradient in the case where we do not require the minimum $n_{min, T} := \min_{(t, x) \in [0, T] \times \overline{\Omega}} n(t, x)$ to be greater than or equal to $\eta - \epsilon$. This less refined estimate will be useful for showing global existence for \textit{any} nonnegative initial data with small gradient and no vacuum.

\begin{proposition}[General estimate on $L^{p}$ norm of gradient]\label{W1pgeneral}
Suppose that $(n, W)$ has regularity $n \in C(0, T; W^{1, p}(\Omega))$ and $W \in C(0, T; W^{3, p}(\Omega))$ for some fixed positive even integer $p > d$ and $\min_{(t, x) \in [0, T] \times \overline{\Omega}} n(t, x) := n_{min, T} > 0$. Define $n_{max, T} := \max_{(t, x) \in [0, T] \times \overline{\Omega}} n(t, x)$. Then, if $$\left(\sum_{j = 1}^{d} \|\partial_{i}n\|^{p}_{C(0, T; L^{p}(\Omega))}\right)^{1/p} < \delta,$$ then the following inequality holds for all $t \in [0, T]$ and $1 \le j \le d$:
\begin{equation*}
\sum_{j = 1}^{d} \int_{\Omega} (n_{j}(t))^{p} dx \le \sum_{j = 1}^{d} \int_{\Omega} n_{0, j}^{p} dx + C(\delta, p, a, \alpha, \beta, \theta, \mu, \gamma, n_{max, T}) \int_{0}^{t} \left(\sum_{j = 1}^{d} \int_{\Omega} (n_{j}(s))^{p} dx\right) ds.
\end{equation*}
\end{proposition}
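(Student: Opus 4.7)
The plan is to imitate the argument used for Proposition \ref{W1pcontrol}, but without relying on dissipation to absorb terms into a negative part on the right-hand side. Since we no longer assume $n \ge \eta - \epsilon$, we cannot guarantee that $\alpha - \beta n^{\gamma\theta} - a\gamma\mu^{-1}n^{\gamma}$ is small or negative; instead we simply use $n \le n_{max,T}$ together with $n \ge n_{min,T} > 0$ (which is automatic from the hypothesis) and bound everything by a constant times $\int_{\Omega} n_j^p$.

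Concretely, I would differentiate the continuity equation to produce equation \eqref{ni} and differentiate the elliptic equation for $W$ to produce \eqref{Wiij}, then test \eqref{ni} by $p\, n_j^{p-1}$ and integrate by parts in the transport term $p\int_{\Omega} n_{ij} n_j^{p-1} W_i = -(p-1)\int_{\Omega} n_j^p W_{ii}$ (after summing in $i$, use $\Delta W = \mu^{-1}(W - an^{\gamma})$). This reproduces the identity \eqref{simplifyfirst} unchanged. Now, rather than taking advantage of the negative contributions $-a\mu^{-1}(p-1)\int n^{\gamma} n_j^p$ and $-\beta\gamma\theta p \int n^{\gamma\theta} n_j^p$, I would simply discard them (they have the correct sign) and estimate the remaining terms crudely.

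For the favorable sign terms dropped, nothing needs to be done. For the remaining terms, I would proceed as follows: estimate $p\int_{0}^{t}\int_{\Omega} \alpha\, n_j^p$ trivially; bound $p\sum_{i,j}\int_{0}^{t}\int_{\Omega} n_i n_j^{p-1} W_{ij}$ using Hölder together with $\|\nabla W\|_{W^{1,\infty}} \le C(n_{max,T},a,\gamma) \bigl(\sum_j \|\partial_j n\|_{L^p}^p\bigr)^{1/p} < C(n_{max,T},a,\gamma)\delta$ from \eqref{gradLip} in Lemma \ref{dissipativelemma}; bound $\mu^{-1}(p-1)\int_{0}^{t}\int_{\Omega} n_j^p W$ using $\|W\|_{L^{\infty}} \le C(n_{max,T},a,\gamma)$ from Proposition \ref{nonnegativeW} together with $n \le n_{max,T}$ and Sobolev embedding; and bound $\mu^{-1}p\int_{0}^{t}\int_{\Omega} n n_j^{p-1} W_j$ with Hölder using $n \le n_{max,T}$ and the same Lipschitz estimate for $\nabla W$. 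Every one of these estimates produces a term of the form
\begin{equation*}
C(\delta, p, a, \alpha, \beta, \theta, \mu, \gamma, n_{max,T}) \int_{0}^{t} \int_{\Omega} n_j^p,
\end{equation*}
and summing over $j = 1, \dots, d$ yields the claimed inequality.

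There is no genuine obstacle here: the work was already done in the proof of Proposition \ref{W1pcontrol}, and in fact this result is easier, since we neither need to absorb any term nor to choose $\delta, \mu^{-1}$ small enough to close a dissipative estimate. The only point of care is that the constant on the right-hand side is allowed to depend on all parameters listed (in particular on $\delta$, $\mu$, and $n_{max,T}$), which gives us the needed flexibility. The result is strictly a Gronwall-ready inequality rather than the sharper dissipative bound, and consequently it only provides local-in-time control, but that is enough for its intended role of bootstrapping to the regime $n \ge \eta - \epsilon$ where Proposition \ref{W1pcontrol} then applies.
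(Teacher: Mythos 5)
Your proposal is correct and follows essentially the same route as the paper: both start from the identity \eqref{simplifyfirst}, drop (or crudely bound, using $0 < n \le n_{max,T}$) the favorable-sign terms $-a\mu^{-1}(p-1)\int n^{\gamma}n_j^p$ and $-\beta\gamma\theta p\int n^{\gamma\theta}n_j^p$, bound $\|W\|_{L^\infty}$ via Calder\'on--Zygmund, and control the three remaining integrals $p\sum_{i,j}\int n_i n_j^{p-1} W_{ij}$, $\mu^{-1}(p-1)\int n_j^p W$, and $\mu^{-1}p\int n n_j^{p-1} W_j$ by H\"older and Lemma \ref{dissipativelemma}, with no absorption needed. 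The only cosmetic difference is that you appeal to the $W^{1,\infty}$ estimate \eqref{gradLip} for $\nabla W$ where the paper uses the $W^{2,p}$ estimate \eqref{gradp} and Sobolev embedding for the $nn_j^{p-1}W_j$ term; either closes via Young's inequality, as long as you retain $\|\nabla W\|_{L^\infty} \le C\bigl(\sum_j \|\partial_j n\|_{L^p}^p\bigr)^{1/p}$ (rather than replacing it outright by $C\delta$) for that particular term, so that the factor $\|n_j\|_{L^p}^{p-1}$ can be paired against a full power $\bigl(\sum_j\|\partial_j n\|_{L^p}^p\bigr)^{1/p}$.
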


\begin{proof}
From the previous proof in \eqref{simplifyfirst}, we have the equality:
\begin{multline*}
\int_{\Omega} (n_{j}(t))^{p} = \int_{\Omega} n_{0, j}^{p} + p\sum_{i = 1}^{d} \int_{0}^{t} \int_{\Omega} n_{i}n_{j}^{p - 1} W_{ij} + p\int_{0}^{t} \int_{\Omega} \Big(\alpha - \beta n^{\gamma \theta} - a \gamma \mu^{-1}n^{\gamma}\Big) n_{j}^{p} \\
+ \mu^{-1}(p - 1) \int_{0}^{t} \int_{\Omega} n_{j}^{p}W + \mu^{-1}p \int_{0}^{t} \int_{\Omega} nn^{p - 1}_{j}W_{j} - a\mu^{-1}(p - 1) \int_{0}^{t} \int_{\Omega} n^{\gamma} n_{j}^{p} - \beta\gamma \theta p \int_{0}^{t} \int_{\Omega} n^{\gamma \theta} n^{p}_{j}.
\end{multline*}
From the estimate \eqref{gradp} in Lemma \ref{dissipativelemma} applied to the given $p > d$, we have the bound
\begin{equation*}
\|\nabla W(t)\|_{W^{2, p}(\Omega)} \le C(n_{max, T}, p, a, \gamma) \left(\sum_{j = 1}^{d} \|n_{j}\|^{p}_{L^{p}(\Omega)}\right)^{1/p}.
\end{equation*}
Hence, we can estimate:
\begin{equation*}
\left|\int_{0}^{t} \int_{\Omega} nn_{j}^{p - 1} W_{j}\right| \le n_{max, T} \int_{0}^{t} \|n_{j}\|_{L^{p}(\Omega)}^{p - 1} \|W_{j}\|_{L^{p}(\Omega)} \le C(n_{max, T}, p, a, \gamma) \sum_{j = 1}^{d} \int_{0}^{t} \|n_{j}\|_{L^{p}(\Omega)}^{p}.
\end{equation*}
Using the assumption that $\displaystyle \left(\sum_{j = 1}^{d} \|\partial_{j} n\|^{p}_{C(0, T; L^{p}(\Omega))}\right)^{1/p} < \delta$, we can also estimate
\begin{align*}
\left|\int_{0}^{t} \int_{\Omega} n_{i} n_{j}^{p - 1} W_{ij} \right| \le \int_{0}^{t} \|n_{i}\|_{L^{p}(\Omega)} \|n_{j}\|_{L^{p}(\Omega)}^{p - 1} \|W_{ij}\|_{L^{\infty}(\Omega)} &\le \delta \int_{0}^{t} \|n_{j}\|^{p - 1}_{L^{p}(\Omega)} \|W_{j}\|_{W^{2, p}(\Omega)} \\
&\le C(\delta, n_{max, T}, p, a, \gamma) \sum_{j = 1}^{d} \int_{0}^{t} \|n_{j}\|_{L^{p}(\Omega)}^{p}.
\end{align*}
The result therefore follows from these estimates, the fact that $\|W\|_{L^{\infty}(\Omega)} \le C\|W\|_{W^{2, p}(\Omega)} \le C(n_{max, T}, p, a, \gamma)$ by Calder\'{o}n-Zygmund estimates, and the identity \eqref{simplifyfirst}.
\end{proof}

Next, we improve the regularity of solutions as follows. While we have only shown local existence in Proposition \ref{localexistence} in $C(0, T; H^{2}(\Omega))$, we show that these solutions have higher regularity as a result of the fact that the initial data is actually smooth.

\begin{proposition}[Regularity of local-in-time strong solutions]\label{regularity}
Suppose that $(n, W)$ with regularity $n \in C(0, T; W^{1, p}(\Omega))$, $\partial_{t}n \in C(0, T; W^{1, p}(\Omega))$, and $W \in C(0, T; W^{3, p}(\Omega))$ for some positive even integer $p > d$ is a strong solution to \eqref{equations} for smooth initial data $n_{0}(x) > 0$, with $\min_{(x, t) \in [0, T] \times \overline{\Omega}} n(t, x) := n_{min, T} > 0$. Define $\max_{[0, T] \times \overline{\Omega}} n(t, x) := n_{max, T}$. Then, $n(t, \cdot)$ and $\partial_{t}n(t, \cdot)$ are spatially smooth functions for each $t \in [0, T]$ on $\overline{\Omega}$, with the estimates:
\begin{equation*}
\|D^{k}n(t)\|_{L^{p}(\Omega))}^{p} \le e^{C^{*}t}\Big(1 + \|D^{k}n_{0}\|_{L^{p}(\Omega)}^{p}\Big), \qquad \|W(t)\|_{W^{k + 2, p}(\Omega)} \le C^{*}\|n(t)\|_{W^{k, p}(\Omega)},
\end{equation*}
for all positive integers $k$ and for a constant $C^{*}$ depending only on $$C^{*} := C(p, \|n\|_{L^{\infty}(0, T; W^{k - 1, p}(\Omega))}, n_{min, T}, n_{max, T}),$$ which is increasing as a function of $\|n\|_{L^{\infty}(0, T; W^{k - 1, p}(\Omega))}$.
\end{proposition}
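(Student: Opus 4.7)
The plan is to proceed by induction on $k \geq 1$, with the inductive hypothesis that $\|n\|_{L^{\infty}(0, T; W^{k-1, p}(\Omega))}$ is already controlled (the base case $k = 1$ follows from the standing hypotheses on $n$ together with the uniform bound $n_{min, T} \leq n \leq n_{max, T}$). At each step I would simultaneously establish the elliptic gain $\|W(t)\|_{W^{k+2, p}(\Omega)} \leq C^{*} \|n(t)\|_{W^{k, p}(\Omega)}$ and the transport bound $\|D^{k} n(t)\|_{L^{p}(\Omega)}^{p} \leq e^{C^{*} t}\bigl(1 + \|D^{k} n_{0}\|_{L^{p}(\Omega)}^{p}\bigr)$; spatial smoothness of $n(t, \cdot)$ then follows from Sobolev embedding after iterating in $k$, and spatial smoothness of $\partial_{t} n$ is read directly off the equation $\partial_{t} n = \nabla n \cdot \nabla W + n \Delta W + \alpha n - \beta n^{1 + \gamma \theta}$.

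For the elliptic step I would apply Proposition \ref{CZest} to the Brinkman equation to obtain $\|W\|_{W^{k+2, p}} \leq C(\|n^{\gamma}\|_{W^{k, p}} + \|W\|_{L^{p}})$. The chain rule expands $\partial_{\alpha}(n^{\gamma})$ for $|\alpha| \leq k$ as a finite sum of monomials $n^{\gamma - j} \prod_{i} \partial^{\beta_{i}} n$ with $\sum_{i} |\beta_{i}| = j \leq k$. The factor $n^{\gamma - j}$ is controlled in $L^{\infty}$ using $n \geq n_{min, T} > 0$ when $\gamma - j < 0$ and $n \leq n_{max, T}$ when $\gamma - j \geq 0$; all lower-order derivative factors $\partial^{\beta_{i}} n$ with $|\beta_{i}| \leq k - 1$ lie in $L^{\infty}$ by Sobolev embedding (since $p > d$) and are bounded by the inductive hypothesis, while a single highest-order factor (if present) is kept in $L^{p}$. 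Combined with the $L^{p}$ bound on $W$ from Proposition \ref{nonnegativeW}, this produces the claimed elliptic gain with $C^{*}$ depending only on $p$, $n_{min, T}$, $n_{max, T}$, and $\|n\|_{L^{\infty}(0, T; W^{k-1, p})}$.

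For the transport step I would differentiate \eqref{equations} by a multi-index $\partial_{\alpha}$ with $|\alpha| = k$, test against $p (\partial_{\alpha} n)^{p-1}$ (legitimate since $p$ is even, so this is of constant sign), and integrate by parts in the leading term $W_{i} \partial_{i} \partial_{\alpha} n$ to produce the absorbable contribution $-(p-1)\int_{\Omega} (\partial_{\alpha} n)^{p} \Delta W$. The remaining commutator terms coming from $\partial_{\alpha} \text{div}(n \nabla W)$ are sums of products $\partial^{\beta} n \cdot \partial^{\gamma} W$ with $|\beta| \leq k$ and $|\gamma| \leq k + 2 - |\beta|$, while $\partial_{\alpha}(\alpha n - \beta n^{1 + \gamma \theta})$ expands as in the elliptic step; in each such product I would keep the single highest-order factor in $L^{p}$ and place the remaining lower-order factors in $L^{\infty}$ via Sobolev embedding, absorbing them into $C^{*}$ using the inductive hypothesis and the two-sided bound on $n$. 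The main obstacle is the single top-order term of the form $n \cdot (\partial_{\alpha} n)^{p-1} \cdot \partial_{\alpha} \Delta W$, in which $W$ carries $k + 2$ derivatives and cannot be placed in $L^{\infty}$ at this level of regularity; here I would invoke exactly the elliptic gain $\|W\|_{W^{k+2, p}} \leq C^{*} \|n\|_{W^{k, p}}$ just established, together with Hölder's inequality and the fact that $\|n\|_{W^{k, p}}$ splits into $\|\partial_{\alpha} n\|_{L^{p}}$ plus lower-order norms already controlled by the induction, to close the differential inequality $\tfrac{d}{dt} \|\partial_{\alpha} n\|_{L^{p}}^{p} \leq C^{*} \bigl(1 + \|\partial_{\alpha} n\|_{L^{p}}^{p}\bigr)$ and conclude the stated exponential estimate by Gronwall's lemma.
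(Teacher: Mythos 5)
Your plan is correct and follows essentially the same route as the paper: a $k$-induction with the $p(\partial_{\alpha}n)^{p-1}$ multiplier, integration by parts on the top transport term producing $(p-1)\int(\partial_{\alpha}n)^{p}\Delta W$, Calder\'{o}n--Zygmund together with the chain rule (and the two-sided bound $n_{min,T}\le n\le n_{max,T}$ to control negative powers) for the elliptic gain, and Gronwall to close. The one genuine difference is how you absorb the top-order term $n(\partial_{\alpha}n)^{p-1}\partial_{\alpha}\Delta W$: you invoke the just-established elliptic estimate $\|W\|_{W^{k+2,p}}\le C^{*}\|n\|_{W^{k,p}}$ and then H\"{o}lder/Young, whereas the paper substitutes the differentiated PDE $\mu\sum_{i}\partial_{\alpha}\partial_{i}^{2}W=\partial_{\alpha}W-a\,\partial_{\alpha}(n^{\gamma})$ pointwise, trading the $(k+2)$-order $W$ derivative for a $k$-order $W$ derivative (handled by the inductive $W^{k+1,p}$ bound) plus $\partial_{\alpha}(n^{\gamma})$. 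Both variants close the same differential inequality and yield the same constant dependence; the substitution is slightly more hands-on, while your route packages the same information through the elliptic estimate. One small imprecision: the stated bound for $k=1$ does not simply ``follow from the standing hypotheses'' as you suggest---the $k=1$ case has the same structure as $k\ge 2$ but the CZ constant on $W$ cannot be controlled purely by $\|n\|_{L^{p}}$, so the dependence on $\|n\|_{L^{\infty}(0,T;W^{1,p})}$ from the standing hypothesis is genuinely used as the starting point of the bootstrap (the paper does the same, beginning its explicit estimates at the second-derivative level).
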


\begin{proof}

We show this through a priori estimates. Because we are already assuming that we have a solution on $[0, T]$, we can use bounds for lower derivatives, namely for $n$ in $C(0, T; W^{1, p}(\Omega))$, to help close the \textit{a priori estimates} for the higher derivatives.

\medskip

\noindent \textbf{Estimates on the second derivatives of the density.} We take the second derivative $\partial_{j} \partial_{k}$ of both equations and obtain:
\begin{multline}\label{njk}
\partial_{t} n_{jk} - \sum_{i = 1}^{d} \Big(n_{ijk}W_{i} + n_{ij}W_{ik} + n_{ik}W_{ij} + n_{i}W_{ijk} + n_{jk}W_{ii} + n_{j}W_{iik} + n_{k}W_{iij} + nW_{iijk}\Big) \\
= \alpha n_{jk} - \beta(\gamma \theta + 1) \gamma \theta n^{\gamma \theta - 1} n_{j}n_{k} - \beta(\gamma \theta + 1) n^{\gamma \theta} n_{jk},
\end{multline}
\begin{equation}\label{njk2}
\mu \sum_{i = 1}^{d} W_{iijk} = W_{jk} - a\gamma(\gamma - 1)n^{\gamma - 2} n_{j}n_{k} - a\gamma n^{\gamma - 1} n_{jk}.
\end{equation}
We test the first equation by $p n_{jk}^{p - 1}$ and recall that we so far have estimates for all $1 \le p < \infty$ on $n \in L^{\infty}(0, T; W^{1, p}(\Omega))$ and $W \in L^{\infty}(0, T; W^{3, p}(\Omega))$. After testing \eqref{njk} with $p n_{jk}^{p - 1}$, we obtain the following estimates:
\begin{itemize}
    \item First, we obtain after integration by parts:
    \begin{align*}
    p\Big|\sum_{i = 1}^{d} \int_{0}^{t} \int_{\Omega} \Big(n_{ijk}n_{jk}^{p - 1}W_{i} &+ n_{jk}^{p}W_{ii}\Big)\Big| = (p - 1) \left|\int_{0}^{t} \int_{\Omega} n_{jk}^{p} \Delta W\right| \\
    &\le \mu^{-1}(p - 1) \left|\int_{0}^{t} \int_{\Omega} n_{jk}^{p}W\right| + a\mu^{-1}(p - 1) \left|\int_{0}^{t} n_{jk}^{p}n^{\gamma}\right| \\
    &\le C_{p}\int_{0}^{t} \left(\int_{\Omega} |D^{2}n|^{p}\right) \Big(\|W\|_{L^{\infty}(0, T; H^{2}(\Omega))} + \|n\|_{L^{\infty}(0, T; L^{\infty}(\Omega))}^{\gamma}\Big) \\
    &\le C(p, n_{max, T}) \int_{0}^{t} \int_{\Omega} |D^{2}n|^{p}.
    \end{align*}
    \item Next, we use \eqref{njk2} to estimate
    \begin{align*}
    \Big|p\sum_{i = 1}^{d} &\int_{0}^{t} \int_{\Omega} nn_{jk}^{p - 1}W_{iijk}\Big| \\ &\le C_{p}\left(\left|\int_{0}^{t} \int_{\Omega} nn_{jk}^{p - 1}W_{jk}\right| + \left|\int_{0}^{t} \int_{\Omega} n^{\gamma - 1} n_{j}n_{k} n_{jk}^{p - 1}\right| + \left|\int_{0}^{t} \int_{\Omega} n^{\gamma}n_{jk}^{p}\right|\right) \\
    &\le C_{p}\left(\int_{0}^{t} \int_{\Omega} \Big(n^{p}|W_{jk}|^{p} + n^{(\gamma - 1)p} |\nabla n|^{2p}\Big) + \int_{0}^{t} \int_{\Omega} \Big(1 + n^{\gamma}\Big)|D^{2}n|^{p}\right) \\
    &\le C(p, n_{min, T}, n_{max, T})\left(\int_{0}^{t} \Big(\|W\|^{p}_{W^{2, p}(\Omega)} + \|\nabla n\|_{L^{\infty}(\Omega)}^{p} \|\nabla n\|_{L^{p}(\Omega)}^{p}\Big) + \int_{0}^{t} \int_{\Omega} |D^{2}n|^{p}\right) \\
    &\le C(p, \|n\|_{C(0, T; W^{1, p}(\Omega))}, n_{min, T}, n_{max, T}) \int_{0}^{t} \int_{\Omega}\Big(1 + |D^{2}n|^{p}\Big),
    \end{align*}
    using $p > d$ along with the Sobolev embedding $W^{2, p}(\Omega) \subset W^{1, \infty}(\Omega)$ to estimate $\|\nabla n\|_{L^{\infty}(\Omega)}$ and the assumption that $W \in C(0, T; W^{3, p}(\Omega)) \subset C(0, T; W^{2, \infty}(\Omega))$. Note that by Calder\'{o}n-Zygmund estimates (Proposition \ref{CZest} and \ref{nonnegativeW}), $\|W\|_{C(0, T; W^{2, p}(\Omega))} \le C_{p}\|n\|_{C(0, T; L^{p}(\Omega))}$.
    \item We estimate $\partial_{jk}(n^{\gamma})$ in $L^{p}(\Omega)$ as:
    \begin{align*}
    \|n^{\gamma - 2} n_{j}n_{k} + n^{\gamma - 1} n_{jk}\|_{L^{p}(\Omega)} &\le C(n_{min, T}, n_{max, T}) \Big(\|n_{j}\|_{L^{p}(\Omega)} \|n_{k}\|_{L^{\infty}(\Omega)} + \|n_{jk}\|_{L^{p}(\Omega)}\Big) \\
    &\le C(n_{min, T}, n_{max, T}) \Big(\|n_{j}\|_{L^{p}(\Omega)} \|n\|_{W^{2, p}(\Omega)} + \|n_{jk}\|_{L^{p}(\Omega)}\Big) \\
    &\le C(n_{min, T}, n_{max, T}, \|n\|_{C(0, T; W^{1, p}(\Omega))}) \Big(1 + \|D^{2}n\|_{L^{p}(\Omega)}\Big).
    \end{align*}
    So by Calder\'{o}n-Zygmund estimates (Proposition \ref{CZest} and \ref{nonnegativeW}),
    \begin{equation*}
    \|W\|_{W^{3, \infty}(\Omega)} \le C\|W\|_{W^{4, p}(\Omega)} \le C(p, \|n\|_{C(0, T; W^{1, p}(\Omega))}, n_{min, T}, n_{max, T}) \left(1 + \|D^{2}n\|_{L^{p}(\Omega)}\right).
    \end{equation*}
    Hence, we can estimate:
    \begin{align*}
    \Bigg|\sum_{i = 1}^{d} \int_{0}^{t} \int_{\Omega} &\Big(n_{ij}n_{jk}^{p - 1}W_{ik} + n_{ik}n_{jk}^{p - 1}W_{ij} + n_{i}n_{jk}^{p - 1}W_{ijk} + n_{j}n_{jk}^{p - 1}W_{iik} + n_{k}n_{jk}^{p - 1}W_{iij}\Big)\Bigg| \\
    &\le C_{p}\left(\int_{0}^{t} \int_{\Omega} |\nabla n|^{p} |D^{3}W|^{p} + \int_{0}^{t} \left(\int_{\Omega} |D^{2}n|^{p}\right)\Big(1 +  \|W\|_{L^{\infty}(0, T; W^{2, \infty}(\Omega))}^{p}\Big)\right) \\
    &\le C_{p} \left(\int_{0}^{t} \|\nabla n\|_{L^{\infty}(\Omega)}^{p} \|W\|_{W^{3, p}(\Omega)}^{p} + \int_{0}^{t} \left(\int_{\Omega} |D^{2}n|^{p}\right) \Big(1 + \|W\|_{L^{\infty}(0, T; W^{2, \infty}(\Omega)}^{p}\Big)\right) \\
    &\le C(p, \|n\|_{C(0, T; W^{1, p}(\Omega))}, n_{min, T}, n_{max, T})\int_{0}^{t} \int_{\Omega} \Big(1 + |D^{2}n|^{p}\Big),
    \end{align*}
    since $\|\nabla n\|_{L^{\infty}} \le C_p \|n\|_{W^{2, p}(\Omega)}$ and $W \in C(0, T; W^{3, p}(\Omega)) \subset C(0, T; W^{2, \infty}(\Omega))$ with $$\|W\|_{C(0, T; W^{3, p}(\Omega))} \le C(p, n_{max, T})\|n\|_{C(0, T; W^{1, p}(\Omega))}.$$
    \item Finally, we estimate
    \begin{align*}
    \left|\int_{0}^{t} \int_{\Omega} n^{\gamma \theta - 1} n_{j}n_{k} n_{jk}^{p - 1}\right| &\le C(n_{min, T}, n_{max, T}) \int_{0}^{t} \|n_{k}\|_{L^{\infty}(\Omega)} \|n_{j}\|_{L^{p}(\Omega)} \|n_{jk}\|_{L^{p}(\Omega)}^{p - 1} dt \\
    &\le C(\|n\|_{C(0, T; W^{1, p}(\Omega))}, n_{min, T}, n_{max, T}) \int_{0}^{t} \int_{\Omega} \Big(1 + |D^{2}n|^{p}\Big),
    \end{align*}
    by the estimate $\|n_{k}\|_{L^{\infty}(\Omega)} \le C\|n\|_{W^{2, p}(\Omega)}$. We also have the estimate:
    \begin{equation*}
    \beta p (\gamma \theta + 1) \left|\int_{0}^{t} \int_{\Omega} n^{\gamma \theta} n_{jk}^{p}\right| \le C(n_{min, T}, n_{max, T}) \int_{0}^{t} \int_{\Omega} |D^{2}n|^{p}.
    \end{equation*}
\end{itemize}

\noindent \textit{Conclusion.} Combining all of the above estimates, we obtain:
\begin{equation*}
\int_{\Omega} |D^{2}n(t)|^{p} \le \int_{\Omega} |D^{2}n_{0}|^{p} + C(p, \|n\|_{C(0, T; W^{1, p}(\Omega))}, n_{min, T}, n_{max, T}) \int_{0}^{t} \int_{\Omega} \Big(1 + |D^{2}n(s)|^{p}\Big) ds,
\end{equation*}
so by Gronwall's inequality and Calder\'{o}n-Zygmund estimates, we obtain:
\begin{equation*}
\|D^{2}n(t)\|_{L^{p}(\Omega)}^{p} \le e^{C^{*}t}\Big(1 + \|D^{2}n_{0}\|_{L^{p}(\Omega)}^{p}\Big), \qquad \|W(t)\|_{W^{4, p}(\Omega)} \le C^{*}\|n\|_{W^{2, p}(\Omega)},
\end{equation*} 
for some constant $C^{*}$ depending on the parameters $C^{*} := C(p, \|n\|_{C(0, T; W^{1, p}(\Omega))}, n_{min, T}, n_{max, T})$.

\medskip

\noindent \textbf{Estimates on all higher derivatives.} We obtain estimates on higher order spatial derivatives by using an inductive argument, with an inductive assumption that for some positive integer $k$, we have established the following bound:
\begin{equation}\label{inductivek}
\|n\|_{L^{\infty}(0, T; W^{k, p}(\Omega))} \le C, \quad \text{ and hence, by Calder\'{o}n-Zygmund}, \quad \|W\|_{L^{\infty}(0, T; W^{k + 2, p}(\Omega))} \le C.
\end{equation}
We then want to show that under this inductive assumption, we can obtain the estimates:
\begin{equation}\label{inductivestep}
\|D^{k + 1}n\|_{L^{p}(\Omega)}^{p} \le e^{C^{*}t}\Big(1 + \|D^{k + 1}n_{0}\|_{L^{p}(\Omega)}^{p}\Big), \qquad \|W(t)\|_{W^{k + 3, p}(\Omega)} \le C^{*}\|n(t)\|_{W^{k + 1, p}(\Omega)},
\end{equation}
for some constant $C^{*}$, where for simplicity of notation, we will use this shorthand to indicate the following explicit dependence of constants on parameters for the remainder of the proof:
\begin{equation}\label{Cstardepend}
C^{*} = C(p, \|n\|_{L^{\infty}(0, T; W^{k, p}(\Omega))}, n_{min, T}, n_{max, T}).
\end{equation}

To do this, we consider a general multi-index $\partial_{\alpha}$ with $|\alpha| = k + 1$ and we take the derivative with respect to this multi-index of both equations. We obtain:
\begin{equation}\label{alphaderivative}
\partial_{t}\partial_{\alpha} n - \sum_{i = 1}^{d} \Big((\partial_{\alpha} \partial_{i} n) \partial_{i}W + (\partial_{\alpha} n) \partial_{i}^{2} W + n\partial_{\alpha} \partial^{2}_{i}W\Big) + R_{\alpha} = \alpha \partial_{\alpha}n - \beta(\gamma \theta + 1)n^{\gamma \theta} \partial_{\alpha} n + T_{\alpha},
\end{equation}
where:
\begin{itemize}
    \item $R_{\alpha}$ is comprised of terms $(\partial_{\beta} n)(\partial_{\gamma} W)$ for multi-indices with $1 \le |\beta| \le k + 1 = |\alpha|$ and $|\gamma| = k + 3 - |\beta|$.
    \item $T_{\alpha}$ is comprised of terms $n^{\gamma \theta - m} (\partial_{\alpha_{1}}n) (\partial_{\alpha_{2}}n) \cdot \cdot \cdot (\partial_{\alpha_{m+1}}n)$, for a positive integer $1 \le m \le k$ and multi-indices $1 \le |\alpha_{1}| \le |\alpha_{2}| \le ... \le |\alpha_{m+1}| \le k - m + 1$ with $\displaystyle \sum_{i = 1}^{m+1} |\alpha_{i}| = k + 1$.
\end{itemize}
We also have the equation for the potential:
\begin{equation}\label{Winductive}
\mu \sum_{i = 1}^{d} \partial_{\alpha} \partial^{2}_{i}W = \partial_{\alpha} W - a \partial_{\alpha} (n^{\gamma}).
\end{equation}
We test \eqref{alphaderivative} with $p(\partial_{\alpha}n)^{p - 1}$ and integrate over space, and estimate the resulting terms.
\begin{itemize}
\item By integrating by parts and using the Neumann boundary condition on $W$:
\begin{equation*}
p\sum_{i = 1}^{d} \int_{0}^{t} \int_{\Omega} \Big((\partial_{\alpha} \partial_{i}n) \partial_{i}W + (\partial_{\alpha} n)\partial^{2}_{i}W\Big) (\partial_{\alpha} n)^{p - 1} = (p - 1)\int_{0}^{t} \int_{\Omega} (\partial_{\alpha} n)^{p} \Delta W. 
\end{equation*}
Since $k + 2 \ge 4$ in the inductive assumption, we can estimate that $\|\Delta W\|_{L^{\infty}(\Omega)} \le \|W\|_{W^{4, p}(\Omega)}$, and hence by the boundedness of $W$ in $L^{\infty}(0, T; W^{k + 2, p}(\Omega))$:
\begin{equation*}
\left|p\sum_{i = 1}^{d} \int_{0}^{t} \int_{\Omega} \Big((\partial_{\alpha} \partial_{i}n) \partial_{i}W + (\partial_{\alpha}n)\partial^{2}_{i}W\Big) (\partial_{\alpha}n)^{p - 1}\right| \le C^{*}\int_{0}^{t} \int_{\Omega} (\partial_{\alpha}n)^{p},
\end{equation*}
where we recall the shorthand $C^{*}$ from \eqref{Cstardepend}.
\item Next, using the equation for the potential \eqref{Winductive}, we estimate that
\begin{equation}\label{midest}
\left|p\sum_{i = 1}^{d} \int_{0}^{t} \int_{\Omega} n(\partial_{\alpha}n)^{p - 1} \partial_{\alpha}\partial^{2}_{i}W\right| \le C_{p}\left(\left|\int_{0}^{t} \int_{\Omega} n(\partial_{\alpha}n)^{p - 1} \partial_{\alpha} W\right| + \left|\int_{0}^{t} \int_{\Omega} n(\partial_{\alpha}n)^{p - 1} \partial_{\alpha}(n^{\gamma})\right|\right).
\end{equation}
For the first term on the right-hand side of \eqref{midest}, using the inductive assumption on $W$ in \eqref{inductivek}:
\begin{multline*}
\left|\int_{0}^{t} \int_{\Omega} n(\partial_{\alpha} n)^{p - 1} \partial_{\alpha} W\right| \le C_{p}\left(\int_{0}^{t} \int_{\Omega} n^{p} |\partial_{\alpha} W|^{p} + \int_{0}^{t} \int_{\Omega} |\partial_{\alpha}n|^{p}\right) \\
\le C_{p}\left(\int_{0}^{t}\|n\|_{L^{\infty}(0, T; L^{\infty}(\Omega))}^{p} \|W\|_{L^{\infty}(0, T; W^{k + 1, p}(\Omega))}^{p} + \int_{0}^{t} \int_{\Omega} |\partial_{\alpha} n|^{p}\right) \le C^{*}\int_{0}^{t} \int_{\Omega} \Big(1 + |\partial_{\alpha} n|^{p}\Big),
\end{multline*}
where we use the shorthand \eqref{Cstardepend} to indicate the dependence of constants on parameters. For the second term on the right-hand side of \eqref{midest}, we note that because $n \in L^{\infty}(0, T; W^{k, p}(\Omega))$ for some $k \ge 2$ (from the inductive assumption \eqref{inductivek}) and for $p > d$, we have that there are $L^{\infty}(\Omega)$ bounds on the first derivative of $n$ and all higher order derivatives up to order $k - 1$. So therefore,
\begin{equation*}
\left|\int_{0}^{t} \int_{\Omega} n(\partial_{\alpha}n)^{p - 1} \partial_{\alpha}(n^{\gamma})\right| \le \int_{0}^{t} \int_{\Omega} \gamma n^{\gamma} |\partial_{\alpha}n|^{p} + \text{l.o.t.} \le C^{*}\int_{0}^{t} \int_{\Omega} \Big(1 + |D^{k + 1}n|^{p}\Big),
\end{equation*}
where the lower order terms are terms of the form $\displaystyle \int_{0}^{t} \int_{\Omega} n^{\gamma - m} |\partial_{\alpha}n|^{p - 1} |\partial_{\alpha_{1}}n \partial_{\alpha_{2}}n \cdot \cdot \cdot \partial_{\alpha_{m+1}}n|$ where $1 \le m \le k$ and $1 \le |\alpha_{1}| \le |\alpha_{2}| \le ... \le |\alpha_{m+1}| \le k - m + 1$ with $|\alpha_{1}| + |\alpha_{2}| + ... + |\alpha_{m+1}| = k + 1$. The bound on the first term $\displaystyle \int_{0}^{t} \int_{\Omega} \gamma n^{\gamma} |\partial_{\alpha}n|^{p}$ follows from the previous bounds on $\|n\|_{L^{\infty}(0, T; L^{\infty}(\Omega))}$. We can then estimate the lower order terms by estimating that 
\begin{align}\label{lotest}
\int_{0}^{t} \int_{\Omega} n^{\gamma - m} |\partial_{\alpha}n|^{p - 1} |\partial_{\alpha_{1}}n \cdot \cdot \cdot \partial_{\alpha_{m+1}}n| &\le C_{p}\left(\int_{0}^{t} \int_{\Omega} |\partial_{\alpha}n|^{p} + \int_{0}^{t} \int_{\Omega} |n^{p(\gamma - m)}| \cdot  |\partial_{\alpha_{1}}n \cdot \cdot \cdot \partial_{\alpha_{m+1}}n|^{p}\right) \nonumber \\
&\le C^{*}\int_{0}^{t} \int_{\Omega} \Big(1 + |D^{k+1}n|^{p}\Big),
\end{align}
where the estimate subsuming the second integral into $C^{*}$, see the definition of $C^{*}$ in \eqref{Cstardepend}, follows from the inductive assumption that $n \in L^{\infty}(0, T; W^{k, p}(\Omega))$ for some $k \ge 2$ and $p > d$. We also observe that the second integral can only contain at most one $|\alpha_{i}|$ equal to $k$ (which can be estimated in $L^{p}(\Omega)$ by the inductive assumption \eqref{inductivek}), in which case the rest of the $|\alpha_{i}|$ are strictly less than $k$ and hence the $\alpha_{i}$-derivatives can be bounded in $L^{\infty}$ using the inductive bound on $n \in L^{\infty}(0, T; W^{k, p}(\Omega))$.

\item Next, we estimate the contribution of $\displaystyle \int_{0}^{t} \int_{\Omega} R_{\alpha} (\partial_{\alpha}n)^{p - 1}$, where we recall that $R_{\alpha}$ is made up of terms of the form $(\partial_{\beta} n)(\partial_{\gamma} W)$ where $1 \le |\beta| \le k + 1$ and $|\gamma| = k + 3 - |\beta|$. If $|\beta| = k + 1$ so that $|\gamma| = 2$, we can estimate:
\begin{equation*}
\left|\int_{0}^{t} \int_{\Omega} \partial_{\beta} n (\partial_{\alpha}n)^{p - 1} \partial_{\gamma} W\right| \le \|W\|_{L^{\infty}(0, T; W^{2, \infty}(\Omega))} \int_{0}^{t} \int_{\Omega} |D^{k + 1}n|^{p} \le C^{*} \int_{0}^{t} \int_{\Omega} |D^{k + 1}n|^{p},
\end{equation*}
by Sobolev embedding and the inductive assumption $W \in L^{\infty}(0, T; W^{k + 2, p}(\Omega))$ for some $k \ge 2$ and $p > d$ in \eqref{inductivek}. In the case where $|\beta| = 1$, then $|\gamma| = k + 2$, and we can use the inductive bound on $\|n\|_{L^{\infty}(0, T; W^{k, p}(\Omega))}$ and $\|W\|_{L^{\infty}(0, T; W^{k + 2, p}(\Omega))}$ for $k \ge 2$, in \eqref{inductivek} to estimate:
\begin{align*}
\left|\int_{0}^{t} \int_{\Omega} \partial_{\beta} n (\partial_{\alpha} n)^{p - 1}\partial_{\gamma} W\right| &\le C_{p}\left(\int_{0}^{t} \int_{\Omega} |\nabla n|^{p} |D^{k + 2}W|^{p} + \int_{0}^{t} \int_{\Omega} |D^{k + 1}n|^{p}\right) \\
&\le C^{*} \int_{0}^{t} \int_{\Omega} \Big(1 + |D^{k + 1}n|^{p}\Big). 
\end{align*}
Finally, if $2 \le |\beta| \le k$ and hence $3 \le |\gamma| \le k + 1$ with $|\gamma| = k + 3 - |\beta|$, we estimate:
\begin{align*}
\left|\int_{0}^{t} \int_{\Omega} (\partial_{\beta} n)(\partial_{\alpha}n)^{p - 1} \partial_{\gamma} W\right| &\le C_p\left(\int_{0}^{t} \int_{\Omega} |\partial_{\beta}n|^{p} |\partial_{\gamma} W|^{p} + \int_{0}^{t} \int_{\Omega} |D^{k + 1}n|^{p}\right) \\
&\le C^{*}\int_{0}^{t} \int_{\Omega} \Big(1 + |D^{k + 1}n|^{p}\Big),
\end{align*}
since by the inductive assumption \eqref{inductivek}, $n \in L^{\infty}(0, T; W^{k, p}(\Omega))$ and $W \in L^{\infty}(0, T; W^{k + 2, p}(\Omega))$ for all $p > d$ and for some $k \ge 2$. So we conclude that the total contribution is:
\begin{equation*}
\left|\int_{0}^{t} \int_{\Omega} R_{\alpha} (\partial_{\alpha} n)^{p - 1}\right| \le C^{*} \int_{0}^{t} \int_{\Omega} \Big(1 + |D^{k + 1}n|^{p}\Big).
\end{equation*}
\item Next, we estimate $\displaystyle \beta (\gamma \theta + 1) p \left|\int_{0}^{t}  \int_{\Omega} n^{\gamma \theta} (\partial_{\alpha}n)^{p}\right| \le C^{*} \int_{0}^{t} \int_{\Omega} |D^{k + 1}n|^{p}$.
\item Finally, we estimate $\displaystyle \left|\int_{0}^{t} \int_{\Omega} T_{\alpha} (\partial_{\alpha}n)^{p - 1}\right|$, where we recall that $T_{\alpha}$ is comprised of terms of the form $n^{\gamma \theta - m}(\partial_{\alpha_{1}}n)(\partial_{\alpha_{2}}n) \cdot \cdot \cdot (\partial_{\alpha_{m + 1}}n)$ for $1 \le m \le k$ and $1 \le |\alpha_{1}| \le |\alpha_{2}| \le ... \le |\alpha_{m + 1}| \le k - m + 1$ with $\displaystyle \sum_{i = 1}^{m + 1} |\alpha_{i}| = k + 1$. As in \eqref{lotest}, we estimate for $C^{*}$ in \eqref{Cstardepend}:
\begin{equation*}
\left|\int_{0}^{t} \int_{\Omega} T_{\alpha}(\partial_{\alpha}n)^{p - 1}\right| \le C^{*}\int_{0}^{t} \int_{\Omega} \Big(1 + |D^{k + 1}n|^{p}\Big).
\end{equation*}
\end{itemize}

So we obtain the final estimate $\displaystyle 
\int_{\Omega} |D^{k + 1}n(t)|^{p} \le \int_{\Omega} |D^{k + 1}n_{0}|^{p} + C^{*} \int_{0}^{t} \int_{\Omega} \Big(1 + |D^{k + 1}n(s)|^{p}\Big)$, and hence conclude the inductive step \eqref{inductivestep} for $\|n\|_{L^{\infty}(0, T; W^{k + 1, p}(\Omega))}$ by Gronwall's inequality and for $\|W\|_{L^{\infty}(0, T; W^{k + 3, p}(\Omega))}$ by Calder\'{o}n-Zygmund estimates, obtaining that for some constant $C^{*}$ depending on parameters as in \eqref{Cstardepend}:
\begin{equation*}
\|D^{k + 1}n(t)\|_{L^{p}(\Omega)}^{p} \le e^{C^{*}t}\Big(1 + \|D^{k + 1} n_{0}\|_{L^{p}(\Omega)}^{p}\Big), \qquad \|W(t)\|_{W^{k + 3, p}(\Omega)} \le C^{*}\|n(t)\|_{W^{k + 1, p}(\Omega)}.
\end{equation*}

\end{proof}

\subsection{Proof of global existence theorems}\label{globalproofs}

Using the a priori global estimates in Proposition \ref{W1pcontrol}, Proposition \ref{W1pgeneral}, and Proposition \ref{regularity}, we can prove the small gradient result in Theorem \ref{global}. 

\begin{proof}[Proof of Theorem \ref{global}]
Consider initial data $n_{0} \in C^{\infty}(\overline{\Omega})$ such that $m := \min_{x \in \overline{\Omega}} n_{0}(x) > 0$ and $M := \min_{x \in \overline{\Omega}} n_{0}(x) > 0$. Define the constant $C(M, a, \gamma)$ in Theorem \ref{global} to be the constant $C(n_{max, T}, a, \gamma)$ from Proposition \ref{W1pcontrol} for a constant 
\begin{equation*}
n_{max, T} := \max(M, n^{*}),
\end{equation*}
where we recall the definition of $n^{*}$ from Theorem \ref{global} and the desired upper bound $n(t, \cdot) \le n_{max, T}$ for $n_{max, T} := \max(M, n^{*})$, from Proposition \ref{maxprinciple}. Choose $\epsilon > 0$ so that 
\begin{equation*}
\alpha - \beta n^{\gamma \theta} - a \mu^{-1} n^{\gamma} \le \frac{\alpha}{4} \min(\gamma \theta, 1) \quad \text{ for all } n \ge \eta - \epsilon
\end{equation*}
for $\eta > 0$ defined in \eqref{nlowerstar}, and consider an associated $\delta > 0$ so that 
\begin{equation}\label{deltachoice}
C(n_{max, T}, a, \gamma)(\delta + \mu^{-1}) \le \frac{\alpha}{8}\min(\gamma \theta, 1),
\end{equation}
which is possible by the assumption in Theorem \ref{global} that $\displaystyle C(n_{max, T}, a, \gamma) \mu^{-1} < \frac{\alpha}{8}\min(\gamma \theta, 1)$. Then, we first consider initial data $n_{0}$ such that 
\begin{equation*}
n_{0} \ge \eta - \epsilon, \qquad \left(\sum_{j = 1}^{d} \|\partial_{j}n_{0}\|^{6}_{L^{6}(\Omega)}\right)^{1/6} < \delta.
\end{equation*}
Let $S$ be the set of times $T \ge 0$ such that there exists a smooth solution on $[0, T]$ and $\max_{0 \le t \le T} \left(\sum_{j = 1}^{d} \|\partial_{j}n(t)\|^{6}_{L^{6}(\Omega)}\right)^{1/6} < \delta$. By Proposition \ref{localexistence}, there exists a local solution with $n \in C(0, T_{loc}; H^{2}(\Omega))$ which by Proposition \ref{regularity} is smooth and which satisfies the desired lower bound $n(t, \cdot) \ge \eta - \epsilon$ on $[0, T_{loc}]$ by Proposition \ref{minprinciple}. By continuity, since $\left(\sum_{j = 1}^{d} \|\partial_{j}n_{0}\|_{L^{6}(\Omega)}^{6}\right)^{1/6} < \delta$, and Sobolev embedding, we conclude that there exists some positive time $T_{0} \in S$. This same argument shows that $S$ is open.

To show that $S$ is closed, suppose that $T_{k} \in S$ and $T_{k} \nearrow T$. We claim $T \in S$. By the a priori estimate in Proposition \ref{W1pcontrol}, we have that $\left(\sum_{j = 1}^{d} \|\partial_{j}n(t)\|_{L^{6}(\Omega)}^{6}\right)^{1/6} < \delta$ for all $t \in [0, T_{k}]$ for each $k$. By the uniform bound on $\eta - \epsilon \le n(t, \cdot) \le n_{max, T}$ for each $t \in [0, T_{k}]$ for each $k$, as a result of Propositions \ref{maxprinciple} and \ref{minprinciple} and the fact that the solution is smooth on $[0, T_{k}]$ by the estimates in Proposition \ref{regularity}, we have that
\begin{equation*}
\|n\|_{C(0, T_{k}; W^{1, 6}(\Omega))} \le C_{\delta},
\end{equation*}
for a constant $C_{\delta}$ that is \textit{independent of $k$}. So by Proposition \ref{regularity}, there exists a constant $C_{\delta}$ \textit{independent of $k$}, such that
\begin{equation*}
\|n\|_{C(0, T_{k}; H^{3}(\Omega))} \le C_{\delta}.
\end{equation*}
Since the local time of existence in Proposition \ref{localexistence} depends only on $\|n_{0}\|_{H^{3}(\Omega)}$ and on the lower and upper bound on $n_{0}$, there exists a uniform time $\tau > 0$ of existence for any initial data bounded below by $\eta - \epsilon$, bounded above by $n_{max, T}$, and with $H^{3}(\Omega)$ norm less than or equal to $C_{\delta}$. Therefore, we can extend the smooth solution from any $T_{k}$ by $\tau > 0$ so that $T \in S$ also. Hence, $S$ is nonempty, open, and closed, and must be $S = [0, \infty)$, which proves global existence for $\left(\sum_{j = 1}^{d} \|\partial_{j} n_{0}\|^{6}_{L^{6}(\Omega)}\right)^{1/6} < \delta$ and $n_{0} \ge \eta - \epsilon$.

In the case where $m := \min_{x \in \overline{\Omega}} n_{0}(x)$ satisfies $0 < m < \eta - \epsilon$, we are motivated by Proposition \ref{minprinciple} to let $\tau_{m}$ be the time for which the ODE
\begin{equation*}
\frac{d\overline{n}}{dt} = \alpha \overline{n} - \beta \overline{n}^{1 + \gamma \theta} - a\mu^{-1} \overline{n}^{1 + \gamma}, \qquad \overline{n}_{0} = m,
\end{equation*}
has 
\begin{equation}\label{taum}
\overline{n}(\tau_{m}) = \eta - \epsilon.
\end{equation}
(Note that the unique solution to this ODE for initial condition $m \in (0, \eta)$ is strictly increasing in time with $\lim_{t \to \infty} \overline{n}(t) = \eta$, which shows that such a $\tau_{m}$ exists). Choose $\delta_{0} \le \delta/2$ sufficiently small so that
\begin{equation}\label{deltanumerology}
\delta_{0}^{6}e^{C^{*}\tau_{m}} \le (\delta/2)^{6},
\end{equation}
where $\delta$ was the $\delta$ from \eqref{deltachoice}, which gives the global existence result for initial data bounded below by $\eta - \epsilon$ with an $L^{6}(\Omega)$ norm of the gradient strictly bounded above by $\delta$. Here, $C^{*}$ is a shorthand notation for the constant $C^{*} := C(\delta, p, a, \alpha, \beta, \theta, \mu, \gamma, n_{max, T})$ from Proposition \ref{W1pgeneral} for $p = 6$. Then, given $\|\nabla n_{0}\|_{L^{6}(\Omega)} < \delta_{0}$, we can use the more general a priori estimate in Proposition \ref{W1pgeneral} similarly to conclude that a smooth solution exists on $[0, \tau_{m}]$ and for the final time $\tau_{m}$, we have that $\left(\sum_{j = 1}^{d} \|\partial_{j}n(\tau_{m})\|_{L^{6}(\Omega)}^{6}\right)^{1/6} \le \delta/2 < \delta$ via the a priori integral inequality (Proposition \ref{W1pgeneral}) and \eqref{deltanumerology}. Then, we can construct a global solution by using $n(\tau_{m}, \cdot)$ as initial data and applying the previous result for initial data bounded below by $\eta - \epsilon$, since by the choice of $\tau_{m}$ and Proposition \ref{minprinciple}, we have that $n(\tau_{m}, \cdot) \ge \eta - \epsilon$ and $\left(\sum_{j = 1}^{d} \|\partial_{j} n(\tau_{m})\|_{L^{6}(\Omega)}^{6}\right)^{1/6} \le \delta/2$. This allows us to construct a solution on $[\tau_{m}, \infty)$ and combine the two resulting solutions together. 
\end{proof}

\if 1 = 0
\section{Preliminary long-time behavior results}

First, we show that the cell density must asymptotically be bounded above by $n^{*}$, which is the unique positive solution to $\alpha n - \beta n^{1 + \gamma \theta} = 0$, even if the initial data $n_{0}$ has regions that are greater than $n^{*}$. To do this, we let
\begin{equation*}
M := \max_{x \in \overline{\Omega}} n(x),
\end{equation*}
and we claim that for all $\nu > n^{*}$, 
\begin{equation*}
\lim_{t \to \infty} \Big(1 - F_{t}(\nu)\Big) = \lim_{t \to \infty} \text{meas}\{n(t, \cdot) > \nu\} = 0.
\end{equation*}

To show this, we consider the following set $S$:
\begin{equation*}
S = \{\nu \ge 0 : \text{there exists $T > 0$ such that $n(t) \le \nu$ for all $t \ge T$}\}.
\end{equation*}
Note that $S$ is non-empty, since $[M, \infty) \subset S$ by the maximum principle, Proposition TODO. We want to show more specifically that $(n^{*}, \infty) \subset S$.

Note in particular that $S$ is a connected set: if $\nu \in S$, then $[\nu, \infty) \in S$ also. Therefore, to show that $(n^{*}, \infty) \subset S$, we argue by contradiction, in which case we can consider:
\begin{equation*}
\lambda = \inf S > n^{*}.
\end{equation*}
We derive a contradiction by considering the equation for $(n - \lambda)^{+}$. First, note that
\begin{equation*}
\partial_{t}(n - \lambda) - \text{div}((n - \lambda) \nabla W ) = \alpha n - \beta n^{1 + \gamma \theta} + \lambda \Delta W,
\end{equation*}
and hence, by using the second equation for the potential:
\begin{equation*}
\partial_{t}(n - \lambda) - \text{div}((n - \lambda) \nabla W) = \alpha n - \beta n^{1 + \gamma \theta} + \mu^{-1} \lambda \Big(W - a n^{\gamma}\Big).
\end{equation*}
Therefore, multiplying by $\text{sgn}(n - \lambda)$, we obtain:
\begin{equation*}
\partial_{t}(n - \lambda)^{+} - \text{div}((n - \lambda)^{+} \nabla W) = \text{sgn}(n - \lambda)\Big(\alpha n - \beta n^{1 + \gamma \theta} + \mu^{-1}\lambda(W - an^{\gamma})\Big),
\end{equation*}
and hence by integrating and using the Neumann boundary condition for $W$, we obtain that for any $0 < s < t$:
\begin{equation}\label{ddtabovelambda}
\frac{d}{dt} \int_{\Omega} (n - \lambda)^{+}(t) = \int_{\Omega} \text{sgn}(n - \lambda) \Big(\alpha n - \beta n^{1 + \gamma \theta} + \mu^{-1}\lambda(W - an^{\gamma})\Big).
\end{equation}
We assumed that $\lambda > n^{*}$ and we will now obtain a contradiction as follows. Since $S$ is a connected set, $(\lambda, \infty) \subset S$. Furthermore,
\begin{equation}\label{sgnineq}
\int_{\Omega} \text{sgn}(n - \lambda) \Big(\alpha n - \beta n^{1 + \gamma \theta} + \mu^{-1}\lambda (W - an^{\gamma})\Big) \le\int_{\Omega} \Big(\alpha\lambda - \beta \lambda^{1 + \gamma \theta} + \mu^{-1}\lambda(W - a\lambda^{\gamma})\Big).
\end{equation}
Since $\lambda > n^{*}$, note that 
\begin{equation*}
\alpha \lambda - \beta \lambda^{1 + \gamma \theta} < 0.
\end{equation*}
Therefore, we can choose $\lambda_{0} > \lambda$ sufficiently close to $\lambda$ such that:
\begin{equation}\label{lambda0}
a\lambda - \beta\lambda^{1 + \gamma \theta} + \mu^{-1}\lambda a(\lambda_{0}^{\gamma} - \lambda^{\gamma}) := -c_0 < 0.
\end{equation}
We then obtain a contradiction as follows. Since $\lambda_0 > \lambda$, we have that $\lambda_0 \in S$, and hence, we choose $T$ such that $n(t, x) \le \lambda_0$ for all $t \ge T$. Then, by uniqueness of the solution (see Proposition TODO), $n(t, x)$ for $t \ge T$ can be obtained by solving the given equations with initial data $n(T, x)$, which is bounded from above by $\lambda_0$. So by applying the bound on the potential $W$ in Proposition TODO, we obtain:
\begin{equation*}
0 \le W(t, x) \le a\lambda_0^{\gamma}, \qquad \text{ for all } t \ge T.
\end{equation*}
Then, using this bound on the potential along with \eqref{ddtabovelambda}, \eqref{sgnineq}, and \eqref{lambda0}, we obtain:
\begin{equation*}
\frac{d}{dt}\int_{\Omega}(n - \lambda)^{+} (t) \le -c_0 |\Omega| < 0, \qquad \text{ for all $t \ge T$.}
\end{equation*}
Therefore, there will exist a finite time $T_{\lambda}$ for which 

\fi

\if 1 = 0
\section{Long-time behavior in the absence of vacuum}\label{novacuumlongtime}

Next, we establish some global estimates and long-time behavior. We show that solutions to \eqref{equations} converge to a (constant) steady state $n_{\infty} = n^{*}, W_{\infty} = a(n^{*})^{\gamma}$,
as $t \to \infty$. We first establish this in the case of no vacuum and solutions bounded above by $n^{*}$ his result is easiest to achieve first in the absence of vacuum, which will give intuition for the general proof of Theorem \ref{longtime}. Namely, if the initial data $n_{0}(x) > 0$, then the global smooth solution $(n, W)$ in Proposition \ref{global} remains strictly positive for all time by Proposition \ref{minprinciple}.

\if 1 = 0
To do this, we require global estimates on the solution $(n, W)$. In particular, in the previous section where we obtained global existence of solutions, we only required estimates of finiteness of norms on intervals $[0, T]$, but the size of these norms could very well depend on the final time $T > 0$, since we used Gronwall's inequality to estimate these norms and hence we obtain estimates that depend on the final time $T > 0$. So the goal of this section is to obtain estimates that are \textit{independent of $T$}, as these global bounds will allow us to make conclusions about the long-time behavior of solutions.

\medskip

\noindent \textbf{A global lower bound on the density.} We already have a uniform global bound on the density from above, given in Proposition TODO. However, one of the essential issues in studying the long-time behavior of such systems, as is well known for compressible flow dynamics in general, is the problem of potential appearance and uniform control of vacuum, $n = 0$ in this case. Hence, our first essential and crucial estimate is an observation about a uniform bound from below, which follows from a minimum principle argument and from the special structure of the system of partial differential equations. The result is as follows.

\medskip

In the absence of vacuum, we start with smooth initial data $n_{0} \ge \nu > 0$, and from the previous proposition, we know that the resulting solution remains bounded from below strictly away from zero. This gives rise easily to the following long-term behavior result.

\fi

\begin{theorem}
    Let $0 < \nu \le n_{0} \le n^{*}$ be a smooth function on $\overline{\Omega}$. Then, the solution to \eqref{equations} with initial data $n(0, \cdot) = n_{0}$ satisfies:
    \begin{equation*}
        0 \le \int_{\Omega} \Big(n^{*} - n(t, x)\Big) \le ce^{-Ct},
    \end{equation*}
    for constants $c$ and $C$ depending only on the initial data through the lower bound $\nu > 0$.
\end{theorem}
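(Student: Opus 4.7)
The plan is to integrate the continuity equation over $\Omega$, exploit the Neumann boundary condition for $W$ to eliminate the transport term, and reduce the problem to a scalar ODE-type inequality for $I(t) := \int_\Omega (n^* - n(t,x))\,dx$. Nonnegativity of $I(t)$ is immediate from the maximum principle (Proposition~\ref{maxprinciple}), which guarantees $n(t,x) \le n^*$ for all $t \ge 0$ since $n_0 \le n^*$. Simultaneously, the minimum principle (Proposition~\ref{minprinciple}) applied to initial data $n_0 \ge \nu > 0$ yields a uniform lower bound $n(t,x) \ge \nu_0 := \min(\nu, \eta) > 0$, where $\eta$ is defined in~\eqref{nlowerstar}, so the solution stays strictly away from vacuum uniformly in time.

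Next, I would integrate the continuity equation over $\Omega$ and use $\int_\Omega \mathrm{div}(n\nabla W)\,dx = 0$ (by the Neumann condition $\nabla W \cdot \bd{n}|_{\partial \Omega} = 0$) to obtain
\begin{equation*}
\frac{d}{dt}\int_\Omega (n^* - n)\,dx = -\int_\Omega \left(\alpha n - \beta n^{1+\gamma\theta}\right)\,dx.
\end{equation*}
Using the definition of $n^*$ in \eqref{nstardef}, namely $\alpha = \beta (n^*)^{\gamma\theta}$, the integrand factors as
\begin{equation*}
\alpha n - \beta n^{1+\gamma\theta} = \beta n\bigl[(n^*)^{\gamma\theta} - n^{\gamma\theta}\bigr].
\end{equation*}
Since $n(t,x) \in [\nu_0, n^*]$ pointwise in space and time, the mean value theorem applied to $f(\xi) = \xi^{\gamma\theta}$ provides a constant $c_1 = c_1(\nu_0, n^*, \gamma\theta) > 0$ such that $(n^*)^{\gamma\theta} - n^{\gamma\theta} \ge c_1 (n^* - n)$; combined with $n \ge \nu_0$, this yields the pointwise lower bound
\begin{equation*}
\alpha n - \beta n^{1+\gamma\theta} \ge \beta \nu_0 c_1 (n^* - n).
\end{equation*}

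Inserting this into the scalar identity above gives the linear differential inequality $\frac{d}{dt} I(t) \le -C\, I(t)$ with $C := \beta \nu_0 c_1 > 0$, and Gronwall's inequality finishes the proof: $0 \le I(t) \le I(0)\, e^{-Ct}$, so one can take $c := I(0) = \int_\Omega (n^* - n_0)\,dx \le |\Omega|(n^* - \nu)$, which depends on the initial data only through the lower bound $\nu$ (and through fixed problem parameters $n^*, |\Omega|$). I do not anticipate significant obstacles here; the only delicate point is verifying that the mean value constant $c_1 = \gamma\theta \cdot \min\bigl(\nu_0^{\gamma\theta - 1}, (n^*)^{\gamma\theta - 1}\bigr)$ stays bounded away from zero, which requires the lower bound $n \ge \nu_0 > 0$ (essential when $\gamma\theta < 1$) and the upper bound $n \le n^*$ (essential when $\gamma\theta > 1$). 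Both bounds are in hand from the a priori principles established in Section~\ref{apriori}, so the argument closes cleanly.
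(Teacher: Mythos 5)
Your proof is correct and follows essentially the paper's own argument: both integrate the continuity equation over $\Omega$ so that the divergence terms vanish via the Neumann boundary condition, invoke the minimum principle to keep $n(t,x) \ge \min(\nu,\eta) > 0$ uniformly, and use a mean-value/concavity bound on $f(z) = \alpha z - \beta z^{1+\gamma\theta}$ together with the identity $\alpha n^* - \beta (n^*)^{1+\gamma\theta} = 0$ to close a linear Gronwall-type differential inequality. The only cosmetic difference is that you make the Lipschitz constant explicit through the factorization $\alpha n - \beta n^{1+\gamma\theta} = \beta n\bigl[(n^*)^{\gamma\theta} - n^{\gamma\theta}\bigr]$ and the mean value theorem, whereas the paper simply asserts the analogous two-sided bound on $f(n^*) - f(n)$ from ``an analysis of $f(z)$.''
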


\begin{proof}
Note that $n^{*} - n(t, x) \ge 0$ and furthermore, 
\begin{equation}\label{nstarminusn}
\partial_{t}(n^{*} - n(t, x)) - \text{div}((n^{*} - n)\nabla W) - \text{div}(n^{*}\nabla(W^{*} - W)) = \alpha (n^{*} - n) - \beta \Big((n^{*})^{1 + \gamma \theta} - n^{1 + \gamma \theta}\Big),
\end{equation}
since $\alpha n^{*} - \beta (n^{*})^{1 + \gamma \theta} = 0$ by definition. An analysis of $f(z) = \alpha z - \beta z^{1 + \gamma \theta}$ shows that
\begin{equation*}
-C_{\delta} (n^{*} - n) \le f(n^{*}) - f(n) \le 0, \qquad \text{ for all } z \in [\delta, n^{*}].
\end{equation*}
So since $n(t, x) \ge n_{*} := \min(\nu, \eta)$ (as defined in Proposition \ref{minprinciple}) for all $t$, we can take $\delta = n_{*} > 0$ and hence conclude after integrating \eqref{nstarminusn}, that for some constant $C$:
\begin{equation*}
0 \le \int_{\Omega} (n^{*} - n)(t) \le \int_{\Omega} (n^{*} - n)(0) - C\int_{0}^{t} \int_{\Omega} (n^{*} - n)(s),
\end{equation*}
for a constant $C$ independent of $t$ (depending only on the initial data through $\nu$).
\end{proof}

\fi 

\section{Long-time behavior of global strong solutions}\label{vacuumlongtime}

Now that we have shown that nontrivial global strong solutions can exist in Theorem \ref{global}, we study the long-time behavior of global strong solutions to the system \eqref{equations} with general smooth initial data $n_{0} \ge 0$. In this case, we cannot obtain lower bounds on the density away from zero (see Proposition \ref{minprinciple}) and we must therefore use a more refined analysis to study the long-time behavior of the system. The main idea will be to do a ``level-set analysis", which involves analyzing the equation through a geometric perspective. By Proposition \ref{maxprinciple}, we must have $0 \le n(t, x) \le \max(n^{*}, M)$, for all $t \ge 0$, 
where $\displaystyle M := \text{max}_{x \in \overline{\Omega}} n_{0}(x)$. Hence, consider the level set $\{n_{0}(x) = \xi\}$ for some $0 \le \xi \le \max(n^{*}, M)$. Rewriting the equations using $\bd{u} := -\nabla W$, we obtain:
\begin{equation}\label{rewritten}
\begin{cases}
\partial_{t} n + \bd{u} \cdot \nabla n = \alpha n - \beta n^{1 + \gamma \theta} - a\mu^{-1}n^{1 + \gamma} + \mu^{-1}nW, \\
\text{div}(\bd{u}) = a\mu^{-1} n^{\gamma} - \mu^{-1}W.
\end{cases}
\end{equation}
Rewriting these equations in this form, we note that there are \textit{three contributions} to the way in which this level set $\{n_{0} = \xi\}$ for the initial data evolves over time:

\begin{itemize}
\item \textbf{Transport of the cell density $\bd{n}$ via the flow $\bd{u}$.} The left-hand side of the first equation in \eqref{rewritten} shows the transport of $n(t, x)$ via the velocity field $\bd{u}(t, x)$. Analyzing this transport phenomena involves understanding the flow map associated to the time-dependent vector field $\bd{u} := -\nabla W$, which we will discuss in Section \ref{transport}.
\item \textbf{Accretion and depletion of cells.} From the first equation, we see from the right-hand side that as the cells are transported by the fluid velocity $\bd{u}$, we either have accretion (increase) or depletion (decrease) of cell density along the flow lines of the velocity depending on the sign of the right-hand side. Note that for $\eta$ defined as the solution to $an - \beta n^{1 + \gamma \theta} - a\mu^{-1} n^{\gamma} = 0$, we have that there is \text{always} (strictly positive) accretion of cells for level sets with $0 < \xi < \eta$, due to the non-negativity of $W$. 
\item \textbf{Expansion and contraction of the level set.} Because the level set is transported by the fluid velocity $\bd{u}$, we have local expansion/contraction of the level set depending on the sign of $\text{div}(\bd{u})$. The magnitude of $\text{div}(\bd{u})$ gives the time derivative of the local expansion or contraction of volume of the level set, according to $\text{div}(\bd{u}) = \mu^{-1}(an^{\gamma} - W)$. 
\end{itemize}

To make this level set analysis concrete, we introduce the \textbf{cumulative distribution function} for the cell density at each time $t$, given by $F_{t}: [0, \infty) \to [0, \text{meas}(\Omega)]$, defined as
\begin{equation}\label{cdf}
F_{t}(\xi) = \text{meas}\{n(t, \cdot) \le \xi\}.
\end{equation}
For example, by the maximum principle discussed in Proposition \ref{maxprinciple}, we have that $F_{t}(\xi) = \text{meas}(\Omega)$ for $\xi \ge \max(n^{*}, M)$ where $\displaystyle M := \max_{x \in \overline{\Omega}} n_{0}(x)$ for smooth solutions. The main intermediate result about the dynamics of the tumor growth model is as follows:

\begin{proposition}\label{vanishingcdf}
There exists $\nu \in (0, \eta)$ such that $\lim_{t \to \infty} F_{t}(\xi) = 0$ for all $0 \le \xi \le \nu$. 
\end{proposition}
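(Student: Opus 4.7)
The plan is a Lagrangian level-set analysis using the flow map $\Phi_{t}$ of $\bd{u}:=-\nabla W$ constructed in Section~\ref{transport}. Since $\bd{u}\cdot\bd{n}|_{\partial\Omega}=0$, $\Phi_{t}:\Omega\to\Omega$ is a diffeomorphism, and the change-of-variables formula gives
\begin{equation*}
F_{t}(\xi)=\int_{\{y\in\Omega\,:\,\tilde n(t,y)\le\xi\}}\det D\Phi_{t}(y)\,dy,\qquad\tilde n(t,y):=n(t,\Phi_{t}(y)).
\end{equation*}
Along characteristics, $\tfrac{d\tilde n}{dt}=\alpha\tilde n-\beta\tilde n^{1+\gamma\theta}-a\mu^{-1}\tilde n^{1+\gamma}+\mu^{-1}\tilde nW(t,\Phi_{t}(y))$, and $W\ge 0$ by Proposition~\ref{nonnegativeW} gives the comparison $\tilde n(t,y)\ge\overline n(t,n_{0}(y))$ with $\overline n$ the ODE from Proposition~\ref{minprinciple}. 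We assume $n_{0}\not\equiv 0$ (the case $n_{0}\equiv 0$ being excluded, since then $F_{t}(\xi)=|\Omega|$).

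The central technical observation is a Lagrangian identity for $\rho(t,y):=\tilde n(t,y)\det D\Phi_{t}(y)$: combining the ODE for $\tilde n$ with Liouville's formula $\tfrac{d}{dt}\det D\Phi_{t}=(\mathrm{div}\,\bd{u})\det D\Phi_{t}$ and $\mathrm{div}\,\bd{u}=\mu^{-1}(an^{\gamma}-W)$, the terms $\pm a\mu^{-1}\tilde n^{1+\gamma}$ and $\pm\mu^{-1}\tilde n W$ cancel exactly, leaving
\begin{equation*}
\tfrac{d\rho}{dt}=(\alpha-\beta\tilde n^{\gamma\theta})\rho,\qquad\text{so}\qquad\rho(t,y)\le n_{0}(y)\,e^{\alpha t}.
\end{equation*}
Choose $\nu\in(0,\eta)$ small enough that $\alpha_{\nu}:=\alpha-\beta\nu^{\gamma\theta}-a\mu^{-1}\nu^{\gamma}>\alpha/2$. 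While $\overline n\le\nu$, integrating $\tfrac{d\log\overline n}{dt}=\alpha-\beta\overline n^{\gamma\theta}-a\mu^{-1}\overline n^{\gamma}\ge\alpha_{\nu}$ yields $\overline n(t,s)\ge s\,e^{\alpha_{\nu}t}$, whence $\{\tilde n(t,\cdot)\le\xi\}\subseteq\{n_{0}\le\xi e^{-\alpha_{\nu}t}\}$ for every $\xi\le\nu$ and, via the upper bound on $\rho$, the pointwise Jacobian bound $\det D\Phi_{t}(y)\le e^{(\alpha-\alpha_{\nu})t}$ at every $y$ in this set with $n_{0}(y)>0$.

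Splitting $\{n_{0}\le\xi e^{-\alpha_{\nu}t}\}=\{n_{0}=0\}\sqcup\{0<n_{0}\le\xi e^{-\alpha_{\nu}t}\}$, the contribution of the second piece to $F_{t}(\xi)$ is at most $e^{(\alpha-\alpha_{\nu})t}|\{0<n_{0}\le\xi e^{-\alpha_{\nu}t}\}|$, which tends to zero thanks to $\alpha-\alpha_{\nu}<\alpha_{\nu}$ and the fact that for smooth $n_{0}$ the measure $|\{0<n_{0}\le s\}|$ decays as a positive power of $s$ as $s\downarrow 0$. On the first piece, $\tilde n\equiv 0$, so Liouville collapses to
\begin{equation*}
\det D\Phi_{t}(y)=\exp\Bigl(-\mu^{-1}\int_{0}^{t}W(s,\Phi_{s}(y))\,ds\Bigr),
\end{equation*}
and this tends to zero exponentially as soon as a uniform lower bound $W(t,x)\ge c_{1}>0$ is available for large $t$.

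The main obstacle is establishing this uniform positive lower bound on $W$, which must be argued independently of Theorem~\ref{longtime}. My plan is two-fold. First, I would show $\int_{\Omega}n^{\gamma}(t,x)\,dx\ge c_{0}>0$ uniformly in $t$, by combining the mass balance $\tfrac{d}{dt}\int_{\Omega} n=\int_{\Omega}(\alpha n-\beta n^{1+\gamma\theta})$, the uniform upper bound $n\le\max(n^{*},M)$ from Proposition~\ref{maxprinciple}, the assumption $n_{0}\not\equiv 0$, and Jensen's inequality. Second, using the Green's function representation $W(t,x)=a\int_{\Omega}G(x,y)n(t,y)^{\gamma}\,dy$ together with the strict positivity $G\ge g_{0}>0$ of the Brinkman Neumann Green's function on $\overline\Omega\times\overline\Omega$ -- a fact to be proved in the appendix (Section~\ref{appendix}) -- one concludes $W\ge ag_{0}c_{0}=:c_{1}$. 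Shrinking $\nu$ further so that additionally $a\nu^{\gamma}\le c_{1}/2$, which is compatible with $\alpha_{\nu}>\alpha/2$, then closes the argument. The most delicate quantitative point is balancing the Jacobian growth rate $\alpha-\alpha_{\nu}$ against the measure decay of $\{0<n_{0}\le s\}$, which is precisely what pins down the admissible choice of $\nu$ in $(0,\eta)$.
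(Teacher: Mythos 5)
The Lagrangian identity $\rho(t,y) := \tilde n(t,y)\det D\Phi_t(y)$ with $\tfrac{d\rho}{dt}=(\alpha-\beta\tilde n^{\gamma\theta})\rho$ is correct and elegant — it is exactly the conservation law $\partial_t n+\mathrm{div}(n\bd{u})=nG(n)$ read in Lagrangian coordinates. However, the way you use it to bound the Jacobian contains a genuine gap. From $\rho\le n_0 e^{\alpha t}$ and $\tilde n\ge n_0 e^{\alpha_\nu t}$ you extract $\det D\Phi_t(y)\le e^{(\alpha-\alpha_\nu)t}$, an \emph{exponentially growing} bound, and then claim this growth is beaten by a power-law decay of $|\{0<n_0\le s\}|$ as $s\downarrow0$. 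That decay rate is simply false for general smooth nonnegative $n_0$: take $n_0(x)$ behaving like $e^{-1/x^2}$ near its zero set, for which $|\{0<n_0\le s\}|\sim 1/\sqrt{\log(1/s)}$, slower than any power. Then $e^{(\alpha-\alpha_\nu)t}\,|\{0<n_0\le\nu e^{-\alpha_\nu t}\}|\to\infty$. The paper avoids this entirely by observing that on $\Phi_t^{-1}(\{0<n(t,\cdot)\le\nu\})$ the trajectory $s\mapsto\tilde n(s,y)$ decays backward exponentially, $\tilde n(s,y)\le\nu e^{-\alpha(t-s)/2}$, so $\mathrm{div}\,\bd{u}(s,\Phi_s(y))\le a\mu^{-1}\nu^\gamma e^{-\gamma\alpha(t-s)/2}$ has a time integral that is \emph{uniformly bounded in $t$}; this yields $\det D\Phi_t\le e^{2a\mu^{-1}\alpha^{-1}\gamma^{-1}\nu^\gamma}$, a constant, and no measure-decay rate for $n_0$ is needed. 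Your $\rho$-identity is consistent with this but your crude upper/lower bounds on $\rho$ and $\tilde n$ throw away exactly the cancellation that makes the Jacobian bounded.

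Two further issues in your Step-3 plan. First, the positivity you invoke, $G\ge g_0>0$ on $\overline\Omega\times\overline\Omega$, is stronger than what Section~\ref{appendix} establishes: Proposition~\ref{greenproperties} gives continuity off the diagonal and strict positivity only on the \emph{open} set $\Omega\times\Omega$. The paper's Lemma~\ref{step3} therefore works on a compact subset $K^\delta\subset\Omega\times\Omega$ bounded away from both the diagonal and $\partial\Omega$, and must carefully track the measure of the thin boundary strip that is excluded. Second, your proposed derivation of $\int_\Omega n^\gamma\ge c_0$ from mass balance plus Jensen is not obviously closed: $\tfrac{d}{dt}\int n=\int(\alpha n-\beta n^{1+\gamma\theta})\ge(\alpha-\beta\,\max(n^*,M)^{\gamma\theta})\int n$, and the constant on the right is negative whenever $M>n^*$, so this differential inequality alone does not prevent $\int n$ from decaying. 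The paper sidesteps this by obtaining the lower bound on $W$ from the set $\{n(t,\cdot)>\nu\}$, whose measure is bounded below after applying Steps~1 and~2, rather than from a direct mass estimate. You should also note that even with a lower bound on $W$, it only holds on $\Omega^\delta$ (a set of nearly full measure), not on all of $\Omega$; the paper deals with the remaining set by showing $\mathrm{div}\,\bd{u}\le 0$ there.
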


\medskip

To prove this result, we will first study the flow map of the vector field $\bd{u} := -\nabla W$ to analyze the transport portion of the first equation in \eqref{rewritten} for the cell density, in Section \ref{transport}. Then, we establish Proposition \ref{vanishingcdf} by showing that $W(t, x) \ge w > 0$ for some constant $w$, on a set of measure arbitrarily close to the full measure of $\Omega$. In this case, the second equation in \eqref{rewritten} would imply that the divergence of $\bd{u}$ is strictly negative for almost the entire vacuum set $\{n(t, \cdot) = 0\}$, so that the vacuum is essentially ``exponentially shrinking" in time. Showing this lower bound on $W$ will involve carefully analyzing the elliptic equation $-\mu \Delta W + W = F$, where $F = an^{\gamma}$, which is done in the appendix, Section \ref{appendix}. Finally, we put together all of these components to prove Proposition \ref{vanishingcdf} via a level set analysis in Section \ref{levelsetsec}, and then in Section \ref{finalproof}, we complete the proof of Theorem \ref{longtime}.

\subsection{The flow map}\label{transport}

We assume that we have a global strong solution $(n, W)$ to \eqref{equations} with $n \in C(0, T; W^{1, p}(\Omega))$ and $W \in C(0, T; W^{3, p}(\Omega))$ for all $T > 0$, for some $p > d$. Thus, the flow velocity $\bd{u} := -\nabla W$ is uniformly Lipschitz as a spatial function, with 
\begin{equation*}
\|\bd{u}\|_{C(0, T; W^{1, \infty}(\Omega))} \le C_{T}, \qquad \text{ for all } T > 0,
\end{equation*}
where $C_{T}$ is a constant depending on the final time $T$, increasing in $T$. The spatial Lipschitz regularity of the flow velocity $\bd{u}$ locally in time allows us to define an associated \textit{flow map},
\begin{equation*}
\Phi_{t}(\cdot): \Omega \to \Omega,
\end{equation*}
where we define $\Phi_{t}(x)$ to be the solution to the following ordinary differential equation:
\begin{equation}\label{flow}
\frac{d}{dt}\Phi_{t}(x) = \bd{u}(t, \Phi_{t}(x)), \qquad \Phi_{0}(x) = x.
\end{equation}
Since $\bd{u}$ is uniformly Lipschitz on bounded time intervals, we claim that the flow map is globally well-defined, and furthermore, it is a bijective map.

\begin{proposition}\label{flowprop}
Consider $W \in C(0, T; W^{3, p}(\Omega))$ for $p > d$ defined as a solution to the elliptic equation $-\mu \Delta W + W = an^{\gamma}$ on $\Omega$ for $n \in C(0, T; W^{1, p}(\Omega))$ with Neumann boundary conditions, and $\bd{u} := -\nabla W$. For each $t \ge 0$, the flow map $\Phi_{t}( \cdot): \Omega \to \Omega$ defined above is a bijective, Lipschitz continuous map. In particular, the flow of $\bd{u}$ is defined globally in time. Furthermore, $\partial_{t}\Phi_{t}(x)$ is a well-defined Lipschitz continuous function.
\end{proposition}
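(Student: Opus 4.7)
The plan is to leverage the Lipschitz regularity of $\bd{u} := -\nabla W$ (inherited from the Sobolev regularity of $W$) in order to apply classical Cauchy--Lipschitz theory to the ODE \eqref{flow}. First I would observe that since $p > d$, Sobolev embedding gives $W^{3,p}(\Omega) \hookrightarrow C^{2}(\overline{\Omega})$, and hence $\bd{u} \in C(0, T; C^{1}(\overline{\Omega}))$ with $\|\bd{u}\|_{C(0,T; W^{1, \infty}(\Omega))} \le C_T$. In particular, on each compact time interval $[0, T]$, $\bd{u}(t, \cdot)$ is uniformly spatially Lipschitz with constant $L_T := \|\bd{u}\|_{C(0,T;W^{1,\infty}(\Omega))}$, and it is continuous in $t$. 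The Picard--Lindel\"of theorem then yields local existence and uniqueness of a solution $\Phi_t(x)$ to \eqref{flow} in a neighborhood of $t = 0$.

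The main obstacle is that, a priori, the ODE \eqref{flow} is only meaningful when $\Phi_t(x) \in \overline{\Omega}$, so the central step is to show the trajectory never leaves $\overline{\Omega}$. This is where the Neumann boundary condition $\nabla W \cdot \bd{n}|_{\partial\Omega} = 0$ --- equivalently $\bd{u} \cdot \bd{n}|_{\partial\Omega} = 0$ --- plays the essential role. Since $\partial \Omega$ is smooth, the signed distance function $d(x) := \mathrm{dist}(x, \partial\Omega)$ (taken positive inside $\Omega$) is $C^{2}$ in a tubular neighborhood of $\partial \Omega$, with $\nabla d|_{\partial\Omega} = -\bd{n}$. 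For $\Phi_t(x)$ in this tubular neighborhood, one has $\frac{d}{dt} d(\Phi_t(x)) = \nabla d(\Phi_t(x)) \cdot \bd{u}(t, \Phi_t(x))$. Because $\bd{u}(t, \cdot) \cdot \bd{n} = 0$ on $\partial\Omega$ and $\bd{u}$ is spatially Lipschitz, we obtain the pointwise bound $|\nabla d(y) \cdot \bd{u}(t, y)| \le C L_T \, d(y)$ near the boundary, and a standard Gr\"onwall/viability argument prevents $d(\Phi_t(x))$ from becoming negative. Hence $\Phi_t(x) \in \overline{\Omega}$ for all $t \ge 0$ for which the solution is defined; combined with the compactness of $\overline{\Omega}$ and the boundedness of $\bd{u}$, this extends the local solution globally in time.

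Given global existence of the flow on $\overline{\Omega}$, Lipschitz continuity in $x$ follows from a standard Gr\"onwall estimate applied to the integral form of \eqref{flow}: for $x, y \in \overline{\Omega}$,
\begin{equation*}
|\Phi_t(x) - \Phi_t(y)| \le |x - y| + \int_0^t L_s \, |\Phi_s(x) - \Phi_s(y)| \, ds,
\end{equation*}
which yields $|\Phi_t(x) - \Phi_t(y)| \le |x - y| \exp\bigl(\int_0^t L_s \, ds\bigr)$. For bijectivity, I would run the ODE \eqref{flow} \emph{backward} in time from any target $y \in \overline{\Omega}$ using the vector field $-\bd{u}$, which is still Lipschitz and tangential to $\partial \Omega$; the same invariance and uniqueness arguments produce a unique preimage, so $\Phi_t$ is a bijection of $\overline{\Omega}$ onto itself. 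Finally, $\partial_t \Phi_t(x) = \bd{u}(t, \Phi_t(x))$ is Lipschitz in $x$ as a composition of the spatially Lipschitz map $\bd{u}(t, \cdot)$ with the Lipschitz map $\Phi_t$, and Lipschitz in $t$ on compact intervals via $|\Phi_t(x) - \Phi_s(x)| \le \|\bd{u}\|_{L^\infty} |t - s|$ together with the spatial Lipschitz bound on $\bd{u}$ and its continuity in $t$. Among these steps I expect the invariance of $\overline{\Omega}$ under the flow to be the only nontrivial point; everything else is essentially classical ODE theory, carefully applied.
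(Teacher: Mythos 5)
Your proof is correct and reaches the same conclusion, but the crucial step — showing that trajectories never escape $\overline{\Omega}$ — is argued differently from the paper. The paper first extends $\bd{u}$ from $\overline{\Omega}$ to all of $\R^{d}$ via a Sobolev extension operator, then argues by contradiction: if a trajectory first touches $\partial\Omega$ at time $t_{\mathrm{bdry}}$ at a point $\tilde x$, then by local uniqueness the backward solution through $\tilde x$ must agree with the unique solution lying entirely on $\partial\Omega$ (which exists because $\bd{u}$ is tangential), contradicting the fact that $\Phi_{t}(x) \in \Omega$ for $t < t_{\mathrm{bdry}}$. Your argument instead is quantitative: you estimate $\bigl|\tfrac{d}{dt}\, d(\Phi_t(x))\bigr| \le L_T\, d(\Phi_t(x))$ near $\partial\Omega$ using the Neumann condition $\bd{u}\cdot\bd{n}|_{\partial\Omega}=0$ and the Lipschitz bound on $\bd{u}$, then conclude by Gr\"onwall that $d(\Phi_t(x)) \ge d(x)\, e^{-L_T t} > 0$. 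This viability-type argument is arguably cleaner: it avoids the extension operator entirely (you only ever evaluate $\bd{u}$ inside $\overline{\Omega}$), it sidesteps the implicit fact that the tangential restriction of $\bd{u}$ generates a flow on the boundary manifold, and it produces an explicit lower bound on $d(\Phi_t(x))$ as a free byproduct. The remaining parts — local existence via Picard--Lindel\"of, spatial Lipschitz continuity via Gr\"onwall, and bijectivity via the time-reversed vector field — coincide with the paper's. One small caveat: when you argue that $\partial_t\Phi_t(x) = \bd{u}(t,\Phi_t(x))$ is Lipschitz in $t$, you only have $\bd{u}\in C(0,T;C^1(\overline\Omega))$, i.e.\ mere \emph{continuity} in $t$, so you cannot deduce Lipschitz-in-$t$ regularity of $\partial_t\Phi_t(x)$ from the stated hypotheses without additional time regularity of $W$; the Lipschitz-in-$x$ part of that claim is fine. (The paper's proof, in fact, does not address this last sentence of the statement at all.)
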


\begin{proof}
\noindent \textbf{Global existence.} We extend $\bd{u}$ on $\overline{\Omega}$ to $\R^{n}$, using a linear extension operator $E: W^{3, p}(\Omega) \to W^{3, p}(\R^{n})$ such that $E(\bd{u})|_{\Omega} = \bd{u}$ and $\|E(\bd{u})\|_{W^{k, p}(\R^{n})} \le C\|\bd{u}\|_{W^{k, p}(\Omega))}$ for a constant $C$ depending only on $\Omega$, see Chapter 5 of \cite{AdamsFournier}. To simplify notation, we will denote the extension of $\bd{u}$ to all of $\R^{n}$ still by $\bd{u}$, and we note that the extension satisfies $\bd{u} \in C(0, T; W^{3, p}(\R^{n}))$. 

We then consider an open ball $\tilde{\Omega}$ in $\R^{n}$ containing $\Omega$ such that $\text{dist}(\partial \Omega, \partial \tilde{\Omega}) > R := \text{diam}(\Omega)$ and we consider the ODE \eqref{flow} for $x \in \Omega$. We claim that for every $x \in \Omega$ and arbitrary $T > 0$, this differential equation is solvable up to time $T$. (Since $\bd{u}$ is Lipschitz continuous on $[0, T]$, it is at least locally uniquely solvable for every $x$ in the larger set $\tilde{\Omega}$).

Hence, fix some $x \in \Omega$ and an arbitrary time $T > 0$. We claim that $\Phi_{t}(x)$ is defined up to time $T > 0$. Because $\bd{u} \in C(0, T; W^{2, p}(\R^{n}))$ for $p > d$, there exists a uniform constant $L_T$ such that 
\begin{equation}\label{LT}
\sup_{t \in [0, T]} \|\bd{u}(t, \cdot)\|_{C^{1}(\R^{n})} \le L_T.
\end{equation}
Then, for $R := \text{diam}(\Omega)$, there exists a solution $\Phi_{t}(x) \in C(0, \tau; \R^{d})$ for $0 \le t \le \tau$ satisfying
\begin{equation}\label{Rbound}
\sup_{t \in [0, \tau]} |\Phi_{t}(x) - x| \le R,
\end{equation}
for any positive $\tau(T)$ depending on $T$, which satisfies $\tau(T) < \min(1/L_{T}, R/L_{T})$. To extend the solution in time, we want to iterate this procedure, using $\Phi_{\tau(T)}(x)$ as new initial data to extend the interval of existence, but the time of existence $\tau$ depends on the potential deviation $R$ from the initial data in \eqref{Rbound}, see for example the proof of Theorem 2.1 in \cite{TaylorPDE}. Namely, if $\Phi_{\tau(T)}(x)$ is less than $R$ away from the boundary of the larger set $\tilde\Omega$, we would need to shrink $R$ to continue this procedure, which would further shrink the interval of existence in time that we can extend the solution.

However, this is not the case. Namely, $\Phi_{\tau(T)}(x) \in \Omega$, and hence, we can iterate this procedure from $\tau(T)$ with the same $R := \text{diam}(\Omega)$ in \eqref{Rbound} and extend in time repeatedly by the same $\tau(T)$ until the final $T > 0$. To see this, suppose for contradiction that $\Phi_{\tau(T)}(x)$ leaves the set $\Omega$ within $0 \le t \le \tau$. Then, let $t_{bdry}$ be the least time for which $\Phi_{t_{bdry}}(x) \in \partial \Omega$. Then, $\Phi_{t}(x) \in \Omega$ for all $0 \le t < t_{bdry}$. But this contradicts the fact that the ODE \eqref{flow} has a \textit{unique (local in time) solution} with initial condition $\Phi_{t_{bdry}}(\tilde{x}) = \tilde{x}$, where $\tilde{x} := \Phi_{t_{bdry}}(x)$, on some interval $t \in (t_{bdry} - \epsilon, t_{bdry} + \epsilon)$ for sufficiently small $\epsilon > 0$, since such a solution must lie completely on $\partial \Omega$, as $\bd{u} := -\nabla W$ is tangential to $\partial \Omega$ by the Neumann boundary condition. So by contradiction, $\Phi_{t}(x)$ stays within $\Omega$ up to the local time of existence $\tau(T)$, which allows us to iteratively extend by (the same) $\tau(T)$ until we obtain a solution on $[0, T]$. 

\medskip

\noindent \textbf{Injectivity.} We note that the verification of the injectivity of $\Phi_{t}(\cdot): \Omega \to \Omega$ follows directly from the uniqueness part of the local existence theorem for solutions of differential equations, which we can apply since $\bd{u}$ is Lipschitz. (If $\Phi_{t}(x_1) = \Phi_{t}(x_2)$ for two different points $x_1, x_2 \in \Omega$, this would contradict uniqueness of solutions to the flow map locally).

\medskip

\noindent \textbf{Surjectivity.} Given some fixed $t_0 > 0$, we can find for any $x \in \Omega$ a corresponding $y \in \Omega$ such that $x = \Phi_{t_0}(y)$, by considering the flow (after time $t_0$) of the reversed vector field $\bd{v}(t, x) := -\bd{u}(t_0 - t, x)$ starting at $x$, which would give the point $y$.

\medskip

\noindent \textbf{Lipschitz continuity of $\Phi_{t}(\cdot)$.} By integrating the differential equation \eqref{flow}, we obtain:
\begin{equation*}
|\Phi_{t}(x) - \Phi_{t}(y)| \le |x - y| + \int_{0}^{t} |\bd{u}(s, \Phi_{s}(x)) - \bd{u}(s, \Phi_{s}(y))| ds \le |x - y| + L_{T} \int_{0}^{t} |\Phi_{s}(x) - \Phi_{s}(y)| ds,
\end{equation*}
where $L_{T}$ is defined in \eqref{LT}. Thus by Gronwall's inequality, $|\Phi_{t}(x) - \Phi_{t}(y)| \le |x - y|e^{L_{T}t}$, for $x, y \in \Omega$, so $\Phi_{t}(\cdot)$ is Lipschitz continuous with Lipschitz constant bounded above by $e^{L_{T}t}$.
\end{proof}

Since the flow $\Phi_{t}(x)$ of the vector field $\bd{u}$ is globally defined in time, we can use the method of characteristics to explicitly write the solution to the first equation in \eqref{rewritten} as
\begin{equation}\label{repformula}
n(t, x) = n_{0}(\Phi^{-1}_{t}(x)) + \int_{0}^{t} (\alpha n - \beta n^{1 + \gamma \theta} - a\mu^{-1} n^{1 + \gamma} + \mu^{-1} nW)(s, \Phi_{s}(\Phi^{-1}_{t}(x))) ds,
\end{equation}
since by the chain rule, this equation is the same as 
\begin{equation*}
\frac{d}{ds}n(s, \Phi_{s}(x)) = (\alpha n - \beta n^{1 + \gamma \theta} - a\mu^{-1} n^{1 + \gamma} + \mu^{-1} nW)(s, \Phi_{s}(x)).
\end{equation*}

Since by Proposition \ref{flowprop}, the flow map $\Phi_{t}(x)$ is Lipschitz continuous as a map from $\Omega$ to itself, for each $t \ge 0$, we can establish a relationship between the rate of expansion/contraction of sets under the flow $\Phi_{t}(\cdot)$ and the divergence of the vector field $\bd{u}$ which generates the flow.

\begin{proposition}\label{divuprop}
The flow map $\Phi_{t}(\cdot): \Omega \to \Omega$ satisfies the following differential equation:
\begin{equation}\label{detode}
\frac{d}{dt}\Big(\text{det}(\nabla \Phi_{t}(\cdot))\Big) = \text{div}(\bd{u})(t, \Phi_{t}(\cdot)) \text{det}(\nabla \Phi_{t}(\cdot)), \qquad \text{det}(\nabla \Phi_{0}(\cdot)) = 1.
\end{equation}
Furthermore, for any measurable set $S \subset \Omega$, 
\begin{equation}\label{measidentity}
\text{meas}(\Phi_{t}(S)) = \int_{\Omega} 1_{S}(x) \text{det}(\nabla \Phi_{t}(x)) dx = \int_{\Omega} 1_{S}(x) \text{exp}\left(\int_{0}^{t} \text{div}(\bd{u}(s, \Phi_{s}(x))) ds\right) dx.
\end{equation}
In differential form,
\begin{equation}\label{measidentityode}
\frac{d}{dt}\Big(\text{meas}(\Phi_{t}(S))\Big) = \int_{\Omega} 1_{S}(x) \text{div}\Big(\bd{u}(t, \Phi_{t}(x))\Big) \text{exp}\left(\int_{0}^{t} \text{div}\Big(\bd{u}(s, \Phi_{s}(x))\Big) ds\right) dx.
\end{equation}
\end{proposition}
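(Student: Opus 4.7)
The plan is to establish the ODE \eqref{detode} by combining the variational equation for $\nabla_x \Phi_t(x)$ (obtained by differentiating the flow ODE \eqref{flow} in the initial condition) with Jacobi's formula for the derivative of a determinant, and then to derive the measure identities \eqref{measidentity} and \eqref{measidentityode} via the change of variables formula.

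First I would check spatial regularity of the flow. Since $W \in C(0, T; W^{3,p}(\Omega))$ with $p > d$, Sobolev embedding gives $\nabla W \in C(0, T; W^{2,p}(\Omega)) \hookrightarrow C(0, T; C^{1}(\overline{\Omega}))$, so the vector field $\bd{u} = -\nabla W$ is continuously differentiable in $x$, uniformly on compact time intervals. By the classical theorem on differentiability of ODE flows with respect to initial data, the map $x \mapsto \Phi_t(x)$ is $C^{1}$, and its Jacobian $J(t, x) := \nabla_x \Phi_t(x)$ solves the linear matrix ODE
\begin{equation*}
\frac{d}{dt} J(t, x) = \nabla \bd{u}(t, \Phi_t(x))\, J(t, x), \qquad J(0, x) = I.
\end{equation*}

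Next I would apply Jacobi's formula $\frac{d}{dt}\det A(t) = \det A(t)\cdot \mathrm{tr}\bigl(A(t)^{-1} A'(t)\bigr)$ to $A(t) = J(t, x)$. Since $J(0, x) = I$ and $J$ solves a linear ODE with continuous coefficients, $J(t, x)$ remains invertible for all $t \ge 0$, and by cyclicity of trace,
\begin{equation*}
\mathrm{tr}\bigl(J^{-1}\, \nabla\bd{u}(t,\Phi_t)\, J\bigr) = \mathrm{tr}\bigl(\nabla \bd{u}(t, \Phi_t(x))\bigr) = \mathrm{div}(\bd{u})(t, \Phi_t(x)),
\end{equation*}
which yields \eqref{detode}. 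Solving this scalar linear ODE with initial value $1$ gives
\begin{equation*}
\det(\nabla \Phi_t(x)) = \exp\!\left(\int_0^t \mathrm{div}(\bd{u})(s, \Phi_s(x))\, ds\right) > 0.
\end{equation*}

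Then I would obtain \eqref{measidentity} via the change of variables formula: since $\Phi_t$ is a $C^{1}$ bijection from $\Omega$ onto itself (by Proposition \ref{flowprop}, using that the Neumann condition keeps the flow inside $\Omega$) with strictly positive Jacobian determinant,
\begin{equation*}
\mathrm{meas}(\Phi_t(S)) = \int_{\Phi_t(S)} 1\, dy = \int_S \lvert \det(\nabla \Phi_t(x)) \rvert\, dx = \int_\Omega 1_S(x) \det(\nabla \Phi_t(x))\, dx,
\end{equation*}
and substituting the exponential formula for the determinant gives the second equality in \eqref{measidentity}. The differential form \eqref{measidentityode} follows by differentiating under the integral sign in \eqref{measidentity} and invoking \eqref{detode}; the exchange is justified by the uniform bound $\|\mathrm{div}(\bd{u})\|_{L^\infty([0,T]\times\Omega)} \le C_{T}$ (from $W \in C(0, T; W^{3, p}(\Omega)) \hookrightarrow C(0, T; W^{2, \infty}(\Omega))$ for $p > d$), which provides a dominating function on compact time intervals.

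The main technical point is the $C^{1}$-regularity in $x$ of the flow map, which requires $\nabla \bd{u}$ to be continuous in $x$ uniformly on compact $t$-intervals; this is delivered precisely by the $W^{3, p}$-regularity of $W$ with $p > d$. Once this is in place, everything else is a standard application of Jacobi's formula and the change of variables theorem.
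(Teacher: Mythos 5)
Your proposal is correct and follows essentially the same approach as the paper: establish the Jacobian ODE \eqref{detode} via the variational equation and Jacobi's formula (the paper compresses this into ``direct computation using the chain rule''), integrate to get the exponential formula, apply change of variables for \eqref{measidentity}, and differentiate under the integral for \eqref{measidentityode}. The only minor divergence is the change-of-variables step: you first note $\Phi_t \in C^1$ (via $W^{3,p} \hookrightarrow C^{1}(\overline{\Omega})$ for $\nabla W$) and then invoke the classical $C^1$ change of variables theorem, whereas the paper cites the Lipschitz change of variables formula (Theorem 3.9 in Evans--Gariepy) and treats $\Phi_t$ only as a Lipschitz bijection; both are valid here, and your route is a bit more self-contained since you need the $C^1$ regularity of $\Phi_t$ anyway to make sense of the pointwise Jacobian ODE.
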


\begin{proof}
We can verify that the Jacobian of the Lipschitz map $\Phi_{t}(\cdot): \Omega \to \Omega$ satisfies the differential equation in \eqref{detode} by direct computation using the chain rule, and furthermore, $\nabla \Phi_{0}(x)$ is the identity matrix for all $x \in \Omega$. Note that if we integrate \eqref{detode}, we obtain
\begin{equation}\label{integratedet}
\text{det}(\nabla \Phi_{t}(x)) = \text{exp}\left(\int_{0}^{t} \text{div}(\bd{u}(s, \Phi_{s}(x)))\right) ds.
\end{equation}
The first equality in \eqref{measidentity} follows from writing $\displaystyle \text{meas}(\Phi_{t}(S)) = \int_{\Omega} 1_{\Phi(t, S)}(x) dx$ and then using a change of variables $x \to \Phi_{t}(x)$. The change of variables can be accomplished by using a change of variables formula, see Theorem 3.9 in \cite{EvansGariepy}, which we can apply since $\Phi_{t}(\cdot)$ is a Lipschitz continuous bijective map from $\Omega$ to $\Omega$. The second equality in \eqref{measidentity} follows by applying the explicit solution in \eqref{integratedet}. 
\end{proof}

\subsection{Level set analysis}\label{levelsetsec}

To prove Proposition \ref{vanishingcdf}, we carefully analyze the behavior of the system near vacuum. This will involve analyzing the relative balance between the effects of accretion/depletion and  expansion/contraction on the level sets of the initial data, as the system evolves. In particular, we will use the formula in \eqref{measidentity} which governs the expansion/contraction of measurable sets under the flow map of the time-dependent vector field $\bd{u}(t, x)$ to perform a \textit{level set analysis}, namely an analysis of how level sets of the initial data $\{n_{0}(x) = \xi\} \subset \Omega$ evolve under the flow, both in terms of the change in the level itself $\xi$ due to accretion/depletion of cells and also in terms of expansion/contraction due to the divergence of the flow $\bd{u}$. 

From the first equation of \eqref{rewritten}, given an initial level set $E_{\xi} := \{n_{0}(x) = \xi\}$, we can use the nonnegativity of $W$ in Proposition \ref{nonnegativeW} and the comparison principle to conclude that 
\begin{equation}\label{ncomparison}
n(t, x) \ge \overline{n}(t), \qquad \text{ for all } x \in \Phi_{t}(E_{\xi}),
\end{equation}
where $\overline{n}(t)$ (as just a scalar-valued function of time) satisfies the ODE:
\begin{equation}\label{nbarode}
\frac{d\overline{n}}{dt} = \alpha \overline{n} - \beta \overline{n}^{1 + \gamma \theta} - a\mu^{-1}\overline{n}^{1 + \gamma}, \qquad \overline{n}(0) = \xi.
\end{equation}
Using a stability analysis and the fundamental theorem on existence and uniqueness of differential equations (Theorem 2.1 in \cite{TaylorPDE}), we make the following observations:
\begin{itemize}
    \item If $\xi = 0$, then the solution to this ODE is constant, $\overline{n}(t) = 0$. If $\xi = \eta$, then the solution to this ODE is also constant, $\overline{n}(t) = \eta$.
    \item If $0 < \xi < \eta$, then the solution to this ODE is globally defined and increasing in $t$, with $0 < \overline{n}(t, x) < \eta$ for all $t \in (-\infty, \infty)$.
    \item If $\xi > \eta$, then $\overline{n}(t)$ exists for all $t$, and $\overline{n}(t)$ is decreasing with $\overline{n}(t) > \eta$ for all $t \ge 0$.
\end{itemize}
These observations allow us to perform a careful level set analysis, which will  analyze how level sets change under accretion/depletion of cells as they are transported by the flow. This will allow us to analyze how the measure of level sets change under the flow. Namely, the goal is to prove Proposition \ref{vanishingcdf}, namely show that for some $\nu \in (0, \eta)$, $\lim_{t \to \infty} \text{meas}\{n(t, \cdot) \le \nu\} = 0$. To do this, we using these intermediate steps proved in the next three lemmas:
\begin{itemize}
    \item \textbf{Step 1.} We show that 
    \begin{equation}\label{step1eqn}
    \lim_{t \to \infty} F_{t}(0) = r \qquad \text{for some constant $r \ge 0$}.
    \end{equation}
    This shows that the \textbf{measure of the vacuum converges to some constant}, but since $r$ could be positive, \textit{this does not definitively show that the vacuum vanishes in long time} (which is what we would need for Proposition \ref{vanishingcdf}).
    \item \textbf{Step 2.} We show that for some $\nu > 0$ sufficiently small,
    \begin{equation}\label{step2eqn}
        \lim_{t \to \infty} \text{meas}\{x \in \Omega : 0 < n(t, x) < \nu\} = 0.
    \end{equation}
    Thus, to prove Proposition \ref{vanishingcdf}, it remains to improve the result from Step 1 to show that the vacuum $\{n(t, x) = 0\}$ vanishes in long time.
    \item \textbf{Step 3.} In this step, we improve the result in Step 1 and show that $r = 0$ in \eqref{step1eqn}, so that the \textbf{vacuum vanishes in long time}. To do this, we use the elliptic equation $-\mu \Delta W + W = an^{\gamma}$ for $W$ to show that for all $t \ge \tau$ for $\tau$ sufficiently large:
    \begin{equation}\label{step3eqn}
    W(t, x) \ge w > 0 \qquad \text{for some positive constant $w$},
    \end{equation}
    on a set of almost full measure on $\Omega$. This would allow us, using the second equation in \eqref{rewritten}, to conclude that the constant $r$ in Step 1 is equal to zero. This would complete the proof of Proposition \ref{vanishingcdf}. 
\end{itemize}

\begin{lemma}[Step 1, see \eqref{step1eqn}]\label{step1}
Let $n(t, x) \in C(0, T; W^{1, p}(\Omega))$ and $W \in C(0, T; W^{3, p}(\Omega))$ for $p > d$ solve \eqref{equations} for nonnegative smooth initial data $n_{0}(x) \ge 0$. Then, $\lim_{t \to \infty} F_{t}(0)$ exists and this limit is equal to $\lim_{t \to \infty} F_{t}(0) := r$, for some constant $0 \le r \le \text{meas}\{n_{0}(x) = 0\}$, where $F_{t}(\xi)$ is defined in \eqref{cdf}. Furthermore, $F_{t}(0)$ is a decreasing function of time.
\end{lemma}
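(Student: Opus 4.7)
The plan is to use the representation formula \eqref{repformula} together with the flow-map analysis from Proposition \ref{divuprop} to write $F_t(0)$ explicitly as an integral and observe monotonicity directly. First, I would note that along characteristics, the continuity equation from \eqref{rewritten} reads
\begin{equation*}
\frac{d}{ds} n(s, \Phi_s(y)) = n(s, \Phi_s(y)) \cdot \Big[\alpha - \beta n^{\gamma \theta} - a\mu^{-1} n^{\gamma} + \mu^{-1} W\Big](s, \Phi_s(y)),
\end{equation*}
which is a linear (scalar) ODE in $n$ along the trajectory. Hence $n(t, \Phi_t(y)) = 0$ if and only if $n_0(y) = 0$, which identifies the vacuum set at time $t$ as $V_t := \{x : n(t,x) = 0\} = \Phi_t(V_0)$, where $V_0 := \{y : n_0(y) = 0\}$. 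This is the geometric reformulation that makes the analysis possible.

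Next, I apply the change-of-variables identity \eqref{measidentity} from Proposition \ref{divuprop} with $S = V_0$ to write
\begin{equation*}
F_t(0) = \text{meas}(\Phi_t(V_0)) = \int_{\Omega} 1_{V_0}(x) \exp\left(\int_0^t \text{div}(\bd{u})(s, \Phi_s(x))\, ds\right) dx.
\end{equation*}
The key observation is that for $x \in V_0$, the characteristic stays in the vacuum set, i.e.\ $n(s, \Phi_s(x)) = 0$ for all $s \ge 0$ by the previous step. Using the second equation in \eqref{rewritten}, this forces
\begin{equation*}
\text{div}(\bd{u})(s, \Phi_s(x)) = a\mu^{-1} n^{\gamma}(s, \Phi_s(x)) - \mu^{-1} W(s, \Phi_s(x)) = -\mu^{-1} W(s, \Phi_s(x)) \le 0,
\end{equation*}
where the sign comes from Proposition \ref{nonnegativeW}, which gives $W \ge 0$. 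Therefore, the integrand defining $F_t(0)$ is pointwise nonincreasing in $t$ (and bounded above by $1$) for every $x \in V_0$.

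The conclusion is then immediate: by the monotone convergence theorem (or dominated convergence with the pointwise bound by $1_{V_0}$), $F_t(0)$ is a decreasing function of $t$ and converges to some $r \ge 0$ as $t \to \infty$, with $r \le F_0(0) = \text{meas}(V_0) = \text{meas}\{n_0(x) = 0\}$. No serious obstacle is expected in this step; the substantive content is the identification $V_t = \Phi_t(V_0)$ and the sign $\text{div}(\bd{u}) \le 0$ on the vacuum, both of which follow directly from results already proved in Sections \ref{apriori} and \ref{transport}. The genuine difficulty — showing that $r = 0$, rather than just that the limit exists — is deferred to Step 3 and will require the lower bound \eqref{step3eqn} on $W$ on a set of almost full measure.
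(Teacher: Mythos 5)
Your proof is correct and follows essentially the same approach as the paper: identify the time-$t$ vacuum set with the flow image of the initial vacuum, pull back via the change-of-variables formula \eqref{measidentity}, and observe that $\text{div}(\bd{u}) = -\mu^{-1}W \le 0$ along vacuum trajectories to get pointwise monotonicity of the integrand. One small improvement over the paper's presentation: you explicitly derive the set \emph{equality} $V_t = \Phi_t(V_0)$ from the linear-in-$n$ structure of the ODE along characteristics, whereas the paper only writes the inclusion $\Phi_t^{-1}(V_t) \subset V_0$ even though the subsequent differentiation implicitly needs the full equality (so that the indicator in \eqref{Vtmeas} is fixed in $t$); your version makes that step airtight.
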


\begin{proof}
Using \eqref{ncomparison} and the differential equation for $\overline{n}$ in \eqref{nbarode}, we can observe that 
\begin{equation*}
\Phi_{t}^{-1}(V_{t}) := \Phi^{-1}_{t}\Big(\{x \in \Omega : n(t, x) = 0\}\Big) \subset \{x \in \Omega : n_{0}(x) = 0\}.
\end{equation*}
Therefore, using the identity in \eqref{measidentity}, we obtain that
\begin{equation}\label{Vtmeas}
\text{meas}(V_{t}) = \int_{\Omega} 1_{\Phi^{-1}_{t}(V_{t})} \text{exp}\left(\int_{0}^{t} \text{div}(\bd{u}(s, \Phi_{s}(x))) ds\right) dx.
\end{equation}
Note that for $x \in \Phi_{t}^{-1}(V_{t})$, $n(s, \Phi_{s}(x)) = 0$ for all $s \in [0, t]$ by the properties of \eqref{nbarode} (namely that the solution $\overline{n}$ at any strictly positive initial condition remains strictly positive). Therefore, using the second equation in \eqref{rewritten}, $\text{div}(\bd{u}) = a\mu^{-1}n^{\gamma} - \mu^{-1}W$, which is non-positive whenever $n = 0$ by Proposition \ref{nonnegativeW}, we conclude that $\text{div}(\bd{u}(s, \Phi_{x}(s))) = -W(s, \Phi_{x}(s)) \le 0$ for $x \in \Phi_{t}^{-1}(V_{t})$ and for all $s \in [0, t]$. By differentiating \eqref{Vtmeas} with respect to $t$, we conclude that $\displaystyle \frac{d}{dt}\text{meas}(V_{t}) \le 0$, so that the measure $F_{t}(0)$ of the vacuum set is decreasing in time. Therefore, because $F_{t}(0)$ is nonnegative and decreasing in $t$, it must have a nonnegative limit, $\lim_{t \to \infty} F_{t}(0) := r \ge 0$.
\end{proof}

\begin{lemma}[Step 2, see \eqref{step2eqn}]\label{step2}
Let $n(t, x) \in C(0, T; W^{1, p}(\Omega))$ and $W(t, x) \in C(0, T; W^{3, p}(\Omega))$ for $p > d$ be solutions to \eqref{equations} for smooth initial data $n_{0}(x) \ge 0$. Then, there exists $\nu > 0$ sufficiently small such that $\lim_{t \to \infty} \text{meas}\Big(\{x \in \Omega : 0 < n(t, x) \le \nu\}\Big) = 0$, and hence, for any $0 < \xi \le \nu$, $\lim_{t \to \infty} F_{t}(\xi) = r := \lim_{t \to \infty} F_{t}(0)$.
\end{lemma}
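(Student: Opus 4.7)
The plan is to exploit a monotonicity property of the cell density along characteristics of the flow $\Phi_t$, together with the area formula in Proposition \ref{divuprop}. First, I choose $\nu \in (0, \eta)$ small enough that $\alpha - \beta \nu^{\gamma\theta} - a\mu^{-1}\nu^\gamma \ge \alpha/2$; this is possible because the expression equals $\alpha > 0$ at $\nu = 0$ and vanishes at $\nu = \eta$. For $y$ with $n_0(y) > 0$, set $\tilde n(s, y) := n(s, \Phi_s(y))$ and $\tilde W(s, y) := W(s, \Phi_s(y))$. The first equation of \eqref{rewritten} along the flow yields
\[ \frac{d}{ds}\tilde n = \tilde n\bigl[\alpha - \beta\tilde n^{\gamma\theta} - a\mu^{-1}\tilde n^\gamma + \mu^{-1}\tilde W\bigr], \]
and since $\tilde W \ge 0$ by Proposition \ref{nonnegativeW}, $\frac{d}{ds}\tilde n \ge (\alpha/2)\tilde n$ whenever $\tilde n \le \nu$. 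Hence $\tilde n(\cdot, y)$ is strictly increasing while it remains in $(0, \nu]$, cannot re-enter after exit, and satisfies $\tilde n(s, y) \ge n_0(y) e^{(\alpha/2)s}$ up to the first exit time $\tau(y)$.

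Define the pullback $A(t) := \Phi_t^{-1}(S_\nu(t)) = \{y : 0 < \tilde n(t, y) \le \nu\}$. The monotonicity above implies $A(t_2) \subset A(t_1)$ for $t_2 \ge t_1$, because if $\tilde n(t_2, y) \le \nu$ then $\tilde n(s, y) \le \nu$ for all $s \in [0, t_2]$. For every $y$ with $n_0(y) > 0$, the comparison \eqref{ncomparison} together with $\overline n_y(t) \to \eta > \nu$ from \eqref{nbarode} forces $y \notin A(t)$ for all sufficiently large $t$, so $\bigcap_{t \ge 0} A(t) = \emptyset$, and therefore $\text{meas}(A(t)) \to 0$ by continuity of measure from above.

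To transfer this to the image, I use Proposition \ref{divuprop}:
\[ \text{meas}(S_\nu(t)) = \int_{A(t)}\det(\nabla\Phi_t(y))\, dy = \int_{A(t)}\exp\!\Bigl(\mu^{-1}\!\int_0^t(a\tilde n^\gamma - \tilde W)(s, y)\, ds\Bigr) dy. \]
For $y \in A(t)$, $\tilde n(s, y) \le \nu$ throughout $[0, t]$, hence $\det(\nabla\Phi_t(y)) \le e^{\mu^{-1}a\nu^\gamma t}$. Moreover, $y \in A(t)$ forces $t \le \tau(y) \le (2/\alpha)\log(\nu/n_0(y))$ via the characteristic lower bound, producing the $t$-uniform pointwise majorant
\[ 1_{A(t)}(y)\det(\nabla\Phi_t(y)) \le \bigl(\nu/n_0(y)\bigr)^{2\mu^{-1}a\nu^\gamma/\alpha} \qquad \text{on } \{n_0 > 0\}. \]
For $\nu$ chosen small enough that this majorant is integrable on $\{n_0 > 0\}$, dominated convergence together with the pointwise limit $1_{A(t)}(y) \to 0$ a.e. gives $\text{meas}(S_\nu(t)) \to 0$. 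The ``hence'' claim is immediate from $S_\xi(t) \subset S_\nu(t)$ for $\xi \le \nu$, so $F_t(\xi) - F_t(0) = \text{meas}(S_\xi(t)) \to 0$.

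The main obstacle is the final dominated convergence step: the Jacobian is not uniformly bounded in $t$, so the interplay between the monotone nesting of $A(t)$, the characteristic exit-time estimate, and the smallness of $\nu$ is essential to produce an integrable $L^1$ majorant that tames the exponential growth of $\det(\nabla\Phi_t)$ on the shrinking set.
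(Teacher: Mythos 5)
Your argument is on the right track through the characteristic ODE analysis and the change-of-variables formula, but the final dominated convergence step has a genuine gap. You bound the Jacobian by $\det(\nabla\Phi_t(y)) \le e^{\mu^{-1}a\nu^\gamma t}$ using only $\tilde n \le \nu$, and then try to control this $t$-growth by the exit-time estimate $t \le \tau(y) \le (2/\alpha)\log(\nu/n_0(y))$, producing the pointwise majorant $\bigl(\nu/n_0(y)\bigr)^{2\mu^{-1}a\nu^\gamma/\alpha}$. However, this majorant need not be integrable on $\{0 < n_0 \le \nu\}$, no matter how small $\nu$ (and hence the exponent $q := 2\mu^{-1}a\nu^\gamma/\alpha$) is chosen. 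The initial data $n_0$ is only assumed smooth and nonnegative, so it may vanish to infinite order on its zero set; for instance, if $n_0(y) \sim e^{-1/\mathrm{dist}(y, \{n_0 = 0\})^2}$ near the vacuum, then $(\nu/n_0(y))^q$ behaves like $e^{q/\mathrm{dist}^2}$ and is non-integrable for every $q > 0$. Thus ``$\nu$ small enough'' cannot be achieved in general, and the dominated convergence argument fails.

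The paper's proof avoids this by extracting more from the characteristic ODE: for $y \in \Phi_t^{-1}(S_\nu(t))$, not only is $\tilde n(s) \le \nu$ on $[0,t]$, but in fact $\tilde n(s) \le \nu e^{-\alpha(t-s)/2}$ (exponential decay backwards from the final time $t$). This yields the time-decaying divergence bound $\mathrm{div}(\bd u(s, \Phi_s(y))) \le a\mu^{-1}\nu^\gamma e^{-\alpha\gamma(t-s)/2}$, whose integral over $[0,t]$ is bounded by the \emph{$t$-independent} constant $2a\mu^{-1}\nu^\gamma/(\alpha\gamma)$. The Jacobian is then uniformly bounded in both $t$ and $y$, and the conclusion follows immediately from $\mathrm{meas}\{0 < n_0 \le \nu e^{-\alpha t/2}\} \to 0$, with no need for dominated convergence or any integrability assumption on $(1/n_0)^q$. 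To repair your proof, replace the crude bound $\det(\nabla\Phi_t(y)) \le e^{\mu^{-1}a\nu^\gamma t}$ by this exponentially-weighted estimate; the rest of your structure (the nesting of $A(t)$, the exit-time observation) then becomes superfluous, as the uniform Jacobian bound combined with the shrinking of $A(t) \subset \{0 < n_0 \le \nu e^{-\alpha t/2}\}$ directly gives $\mathrm{meas}(S_\nu(t)) \to 0$.
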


\begin{proof}
Recall the comparison principle in \eqref{ncomparison} and the ODE for $\overline{n}$ from \eqref{nbarode}. Choose $\nu \in (0, \eta)$, where we recall that $\eta$ is the positive zero of $\alpha - \beta n^{\gamma \theta} - a\mu^{-1} n^{\gamma} = 0$, such that $\displaystyle \beta \nu^{\gamma \theta} + a \mu^{-1} \nu^{\gamma} < \frac{\alpha}{2}$. Then, for $0 < \xi \le \nu$, any solution of \eqref{nbarode} satisfying $\overline{n}(0) = \xi$ is defined for all $t \in (-\infty, \infty)$, and furthermore, for all $t \le 0$:
\begin{equation*}
\frac{d\overline{n}}{dt} = \alpha \overline{n} - \beta \overline{n}^{1 + \gamma \theta} - a\overline{n}^{1 + \gamma} > \frac{\alpha}{2}\overline{n} > 0.
\end{equation*}
So a solution $\overline{n}$ to \eqref{nbarode} with $\overline{n}(0) = \xi$ for $0 < \xi \le \nu$ satisfies the following decay estimate: $\overline{n}(t) \le \xi e^{\alpha t/2}$, for $t \le 0$. By shifting in time, we have that for any time $t_{0} \ge 0$, 
\begin{equation}\label{nuexpdecay}
\overline{n}(t_{0}) = \xi \text{ for } 0 < \xi \le \nu \quad \Longrightarrow \quad \overline{n}(t) \le \xi e^{-\alpha(t_0 - t)/2} \text{ for all } 0 \le t \le t_0.
\end{equation}

So consider the level set $S_{t, \nu} := \{x \in \Omega : 0 < n(t, x) \le \nu\}$. Then, using \eqref{measidentity}, we obtain
\begin{equation}\label{measnu}
\text{meas}(S_{t, \nu}) = \int_{\Omega} 1_{\Phi_{t}^{-1}(S_{t, \nu})} \text{exp}\left(\int_{0}^{t} \text{div}(\bd{u}(s, \Phi_{s}(x))) ds\right) dx.
\end{equation}
By \eqref{nuexpdecay}, we have that
\begin{equation}\label{nuineq1}
\Phi_{t}^{-1}(S_{t, \nu}) \subset \{x \in \Omega : 0 < n_{0}(x) \le \nu e^{-\alpha t/2}\}.
\end{equation}
Using the second equation of \eqref{rewritten}, $W \ge 0$ from Proposition \ref{nonnegativeW}, and the estimate that for $x \in \Phi_{t}^{-1}(S_{t, \nu})$, 
\begin{equation*}
0 < n(s, \Phi_{s}(x)) \le \nu e^{-\alpha(t - s)/2} \quad \text{ for } s \in [0, t],
\end{equation*}
we obtain that
\begin{equation}\label{nuineq2}
\text{div}(\bd{u}(s, \Phi_{s}(x))) \le a\mu^{-1}\nu^{\gamma} e^{-\alpha \gamma(t - s)/2}, \qquad \text{ for } x \in \Phi_{t}^{-1}(S_{t, \nu}) \text{ and } 0 \le s \le t.
\end{equation}
So applying \eqref{nuineq1} and \eqref{nuineq2} in \eqref{measnu}, we obtain that:
\begin{align*}
\text{meas}\Big(\{x \in \Omega &: 0 < n(t, x) \le \nu\}\Big) \\
&\le \text{meas}\Big(\{x \in \Omega : 0 < n_{0}(x) \le \nu e^{-\alpha t/2}\}\Big) \text{exp} \left(a\mu^{-1}\nu^{\gamma} \int_{0}^{t} e^{-\alpha \gamma (t - s)/2} ds\right) \\
&\le \text{meas}\Big(\{x \in \Omega : 0 < n_{0}(x) \le \nu e^{-\alpha t/2}\}\Big) e^{2a\mu^{-1}\alpha^{-1}\gamma^{-1}\nu^{\gamma}}.
\end{align*}
The result follows once we note that $\lim_{t \to \infty} \text{meas}\Big(\{x \in \Omega : 0 < n_{0}(x) \le \nu e^{-\alpha t/2}\}\Big) = 0$.
\end{proof}

Finally, we complete the proof of Proposition \ref{vanishingcdf} by showing the result of Step 3 in \eqref{step3eqn}. Namely, we want to show that the vacuum set vanishes as $t \to \infty$. As of now, from Lemma \ref{step1}, we have that for a strong solution $n \in C(0, T; W^{1, p}(\Omega))$ with $W \in C(0, T; W^{3, p}(\Omega))$ and $p > d$ to \eqref{equations} for smooth nonnegative initial data $n_0$ that is not identically zero, $\lim_{t \to \infty} F_{t}(0) := r$, for $0 \le r < \text{meas}(\Omega)$. We claim that $r = 0$, by showing that $W(t, x) \ge w > 0$ for all $t \ge 0$ on a subset of $\Omega$ of arbitrarily large measure, so that by \eqref{rewritten}, we see that $\text{div}(\bd{u}) \le -\mu^{-1}w < 0$ on almost the entire vacuum set so that the measure of the vacuum is essentially exponentially decreasing in time by \eqref{measidentity}. 

To show the strict positivity of $W$ away from zero, we analyze the elliptic equation
\begin{equation}\label{elliptic}
-\mu \Delta W + W = f, \qquad \nabla W \cdot \bd{n}|_{\partial \Omega} = 0,
\end{equation}
for a general source term $f$. Recall that the \textbf{Green's function $G(x, y)$} in the context of equation \eqref{elliptic}, is defined to be a function $G(x, y)$ for $x, y \in \Omega$ that satisfies:
\begin{equation*}
-\mu \Delta_{x} G(x, y) + G(x, y) = \delta_{y}, \qquad \nabla_{x} G(x, y) \cdot \bd{n}|_{\partial \Omega} = 0,
\end{equation*}
where $\delta_{y}$ is the Dirac delta distribution centered at the point $y \in \Omega$. Given the Green's function, the solution to the inhomogeneous problem \eqref{elliptic} is:
\begin{equation}\label{greenrep}
W(x) = \int_{\Omega} G(x, y) f(y) dy, \qquad \text{ for } x \in \Omega. 
\end{equation}
We emphasize that the Green's function is dependent on the specific geometry of $\Omega$. By studying properties of the Green's function $G(x, y)$, we are able to obtain lower bounds on $W(t, x)$. To do this, we use standard properties of \eqref{elliptic} and of the Green's function, which we prove for the interested reader in the appendix, see Section \ref{appendix}.

\if 1 = 0
Next, we make the following standard observation about the continuity of the Green's function off of the diagonal. We give the proof of the result in the appendix, along with properties of the fundamental solution of the operator $-\mu \Delta + I$.

\begin{lemma}
Let $\Omega$ be a simply connected open domain with smooth boundary. Then, the Green's function $G(x, y)$ for the uniformly elliptic operator $-\mu \nabla + 1$ is continuous as a function of $(x, y)$ on $\Omega^{2} \setminus D$ where $D := \{(x, y) \in \Omega^{2} : x = y\}$ is the diagonal set.
\end{lemma}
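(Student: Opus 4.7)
The plan is to decompose $G(x, y) = \Gamma(x - y) + h(x, y)$, where $\Gamma$ is the free-space fundamental solution of $-\mu \Delta + I$ on $\R^d$ and $h$ is a ``regular'' correction that enforces the homogeneous Neumann boundary condition. In three dimensions $\Gamma(z) = (4\pi\mu|z|)^{-1} e^{-|z|/\sqrt{\mu}}$, while in two dimensions $\Gamma(z)$ is a positive constant multiple of the modified Bessel function $K_0(|z|/\sqrt{\mu})$; in either case $\Gamma$ is real-analytic on $\R^d \setminus \{0\}$ and satisfies $-\mu\Delta \Gamma + \Gamma = \delta_0$ in the sense of distributions. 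Since the only singularity of $\Gamma$ occurs at $z = 0$, the map $(x,y) \mapsto \Gamma(x - y)$ is continuous on $\Omega^2 \setminus D$.

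The remaining work is to handle the regular part, which for each fixed $y \in \Omega$ solves the Neumann problem
\begin{equation*}
-\mu \Delta_x h(x, y) + h(x, y) = 0 \text{ on } \Omega, \qquad \nabla_x h(x, y) \cdot \bd{n}\big|_{\partial \Omega} = -\nabla_x \Gamma(x - y) \cdot \bd{n}\big|_{\partial \Omega}.
\end{equation*}
If $y$ is restricted to a compact subset $K \subset \Omega$, the Neumann datum $x \mapsto -\nabla_x \Gamma(x - y) \cdot \bd{n}(x)$ is uniformly smooth on $\partial \Omega$ and varies smoothly with $y$, since $y$ is bounded away from $\partial \Omega$ and $\Gamma$ is real-analytic off the diagonal. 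By the Calder\'on-Zygmund / Schauder regularity theory for the Neumann problem (in the spirit of Proposition \ref{CZest}), the solution operator from boundary data to solution is a bounded linear map into $C^\infty(\overline{\Omega})$, so $h(\cdot, y) \in C^\infty(\overline{\Omega})$ and the assignment $y \mapsto h(\cdot, y)$ is continuous from $K$ into $C^\infty(\overline{\Omega})$. Joint continuity of $h$ on $\overline{\Omega} \times K$ follows.

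To finish, given any point $(x_0, y_0) \in \Omega^2 \setminus D$ I would select a compact neighborhood $K$ of $y_0$ inside $\Omega$ and combine joint continuity of $h$ on $\overline{\Omega} \times K$ with continuity of $\Gamma(x - y)$ off the diagonal; since $(x_0, y_0)$ was arbitrary, continuity of $G$ on $\Omega^2 \setminus D$ follows. The principal technical obstacle I anticipate is the pointwise blow-up of the Neumann datum as $y$ approaches $\partial \Omega$, which precludes a uniform global estimate over all of $\Omega$; restricting $y$ to interior compact subsets sidesteps this, and is sufficient because the claim concerns only the open set $\Omega^2 \setminus D$ rather than behavior up to the boundary. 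A secondary care-point is formulating the continuous dependence of the Neumann solution on its boundary data in a topology (for instance $C^{2,\alpha}$ or $W^{2, p}$ with $p > d$) strong enough to convert continuity in $y$ into joint continuity in $(x, y)$, but this is routine given the linear elliptic regularity theory already invoked in Proposition \ref{CZest}.
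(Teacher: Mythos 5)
Your proposal is correct and mirrors the paper's argument: you use the same decomposition $G = \Gamma + h$ (the paper writes $\Psi + \varphi$) with $\Gamma$ the free-space fundamental solution and $h$ the smooth Neumann corrector, and the same reliance on Schauder-type elliptic regularity to get continuous dependence of $h$ on $y$ restricted to interior compacta, yielding joint continuity off the diagonal. Your explicit remark that the Neumann datum degenerates as $y \to \partial\Omega$ and that this is irrelevant because one only needs continuity on the open set $\Omega^2 \setminus D$ is a nice clarification of a point the paper leaves implicit by working in a ball $B(x_0, \delta) \subset \Omega$.
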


\begin{proof}
The proof of this result is given in the appendix, see Proposition \ref{greenproperties}.
\end{proof}

\medskip

\fi

We will now prove the asymptotic vanishing of vacuum in Step 3 in \eqref{step3eqn} as follows. This improves the result from Step 1 in Lemma \ref{step1} by showing that the limiting measure of the vacuum set $r$ is equal to $0$ in long time.

\begin{lemma}[Step 3, see \eqref{step3eqn}]\label{step3}
Let $n(t, x) \in C(0, T; W^{1, p}(\Omega))$ and $W \in C(0, T; W^{3, p}(\Omega))$ for $p > d$ be solutions to \eqref{equations} for smooth nonnegative initial data $n_{0}(x)$ that is not identically zero. Then, $\lim_{t \to \infty} F_{t}(0) = 0$.
\end{lemma}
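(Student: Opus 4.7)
My plan is to argue by contradiction. Assume $r := \lim_{t\to\infty} F_t(0) > 0$. Since $n_0 \ge 0$ is smooth and not identically zero, $\text{meas}\{n_0=0\} < \text{meas}(\Omega)$, and since $F_t(0)$ is decreasing to $r$ by Lemma \ref{step1}, we have $r \le F_0(0) < \text{meas}(\Omega)$. Set $L := \text{meas}(\Omega) - r > 0$. The first task is to describe the vacuum set $V_t := \{x\in\Omega : n(t,x)=0\}$ dynamically. Using the representation formula \eqref{repformula}, the scalar function $y(s) := n(s,\Phi_s(x))$ satisfies $y'(s) = y(s)\bigl(\alpha - \beta y(s)^{\gamma\theta} - a\mu^{-1}y(s)^{\gamma} + \mu^{-1}W(s,\Phi_s(x))\bigr)$, so by ODE uniqueness (zero is an equilibrium), $y(s) \equiv 0$ iff $n_0(x) = 0$. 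Hence $V_t = \Phi_t(V_0)$ with $V_0 := \{n_0=0\}$. Along trajectories starting in $V_0$, the second equation of \eqref{rewritten} yields $\text{div}(\bu(s,\Phi_s(x))) = -\mu^{-1}W(s,\Phi_s(x))$, so Proposition \ref{divuprop} gives
\begin{equation*}
\text{meas}(V_t) = \int_{V_0} \exp\!\Bigl(-\mu^{-1}\int_0^t W(s,\Phi_s(x))\,ds\Bigr)\,dx,
\end{equation*}
and differentiating in $t$ and changing variables $y=\Phi_t(x)$ (whose Jacobian is exactly the exponential factor) produces the key evolution identity
\begin{equation*}
\frac{d}{dt}\text{meas}(V_t) = -\mu^{-1}\int_{V_t} W(t,y)\,dy.
\end{equation*}

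The second task is to establish a uniform positive lower bound on $W$ on most of $\Omega$, for all sufficiently large $t$. By Lemma \ref{step2}, the set $S_t := \{n(t,\cdot) > \nu\}$ satisfies $\text{meas}(S_t) \to L > 0$, hence $\text{meas}(S_t) \ge L/2$ for all $t \ge \tau_0$. Using the Green's function representation \eqref{greenrep},
\begin{equation*}
W(t,y) = \int_\Omega G(y,z)\, a\,n(t,z)^\gamma\,dz \;\ge\; a\nu^\gamma \int_{S_t} G(y,z)\,dz.
\end{equation*}
The Green's function properties established in the appendix (positivity of $G$ and continuity of $G$ off the diagonal, yielding a uniform positive lower bound on $G(y,z)$ outside a small neighborhood of the diagonal) then let me choose a small $\delta > 0$ so that subtracting the $\delta$-diagonal neighborhood reduces $\text{meas}(S_t)$ by at most $L/4$, producing a constant $c_0 = c_0(L) > 0$ and a set $E_t \subseteq \Omega$ with $\text{meas}(\Omega \setminus E_t) \le r/2$ such that $\int_{S_t} G(y,z)\,dz \ge c_0$ for all $y \in E_t$. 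Consequently $W(t,y) \ge w_0 := a\nu^\gamma c_0 > 0$ for all $y \in E_t$ and all $t \ge \tau$, where $\tau \ge \tau_0$ is chosen large enough.

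To close the argument, note that $\text{meas}(V_t) \ge r$ for all $t$ (since $F_t(0) \searrow r$), hence $\text{meas}(V_t \cap E_t) \ge r - r/2 = r/2$. Combining with the evolution identity from the first step,
\begin{equation*}
\frac{d}{dt}\text{meas}(V_t) \;\le\; -\mu^{-1} \int_{V_t \cap E_t} W(t,y)\,dy \;\le\; -\mu^{-1} w_0 \cdot \tfrac{r}{2} \;<\; 0 \qquad \text{for all } t \ge \tau.
\end{equation*}
Integrating forces $\text{meas}(V_t) \to -\infty$, which is impossible. Therefore $r = 0$, completing the proof.

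The main obstacle is the second step: obtaining the uniform positive lower bound on $W$ on a set of nearly full measure from the fact that $\text{meas}(S_t)$ is bounded below. This rests on quantitative positivity properties of the Neumann Green's function for the Brinkman operator $-\mu\Delta + I$ — specifically that $G(y,z)$ is positive and continuous on $\Omega^2 \setminus \{y=z\}$ and admits a uniform positive lower bound away from the diagonal — which the authors defer to the appendix. Once these Green's function estimates are in hand, the rest of the proof is a clean combination of the flow map analysis of Section \ref{transport} with the prior steps (Lemmas \ref{step1}, \ref{step2}).
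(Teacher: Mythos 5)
Your proof is correct and follows essentially the same route as the paper: argue by contradiction with $r>0$, use the Green's function positivity from the appendix together with the Step~2 bound on $\text{meas}\{n(t,\cdot)>\nu\}$ to get a uniform lower bound $W\ge w>0$ on a set of nearly full measure, and then apply the flow-map measure identity to force $\frac{d}{dt}\text{meas}(V_t)\le -c<0$, contradicting $\text{meas}(V_t)\ge r$. The only (cosmetic) difference is that you change variables $y=\Phi_t(x)$ to state the evolution identity in the cleaner form $\frac{d}{dt}\text{meas}(V_t)=-\mu^{-1}\int_{V_t}W(t,y)\,dy$, and you use a slightly different constant bookkeeping ($r/2$ rather than the paper's $3r/4$ split), but the underlying estimates are the same.
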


\begin{proof}
It suffices to show that $r = 0$ in Lemma \ref{step1}. We assume for contradiction that
\begin{equation}\label{rdef}
\lim_{t \to \infty} \text{meas}\{n(t, \cdot) = 0\} := r > 0,
\end{equation}
where $r < \text{meas}(\Omega)$ since the initial data $n_0(x)$ is not identically zero. For the choice of $\nu \in (0, \eta)$ in Lemma \ref{step2}, $\lim_{t \to \infty} \text{meas}\left\{n(t, \cdot) > \nu\right\} = \text{meas}(\Omega) - r$. Thus, for some $\tau$ sufficiently large, 
\begin{equation}\label{measT}
\text{meas}\{n(t, \cdot) > \nu\} \ge \frac{\text{meas}(\Omega) - r}{2} \quad \text{ and } \quad \text{meas}\{n(t, \cdot) = 0\} \le \frac{5r}{4}, \qquad \text{ for all } t \ge \tau.
\end{equation}

Define the following compact set $K^{\delta}_{y} \subset \Omega$ for each $y \in \Omega$ and $\delta > 0$:
\begin{equation*}
K^{\delta}_{y} := \{x \in \Omega : |x - y| \ge \delta \text{ and } \text{dist}(x, \partial \Omega) \ge \delta\},
\end{equation*}
and let $K^{\delta} \subset \Omega \times \Omega$ be the compact set:
\begin{equation*}
K^{\delta} := \bigcup_{y \in \Omega, \text{dist}(y, \partial \Omega) \ge \delta} \Big(\{y\} \times K_{y}^{\delta}\Big).
\end{equation*}
Choose $\delta$ sufficiently small so that 
\begin{equation}\label{Kdeltameas}
\text{meas}(\Omega \setminus K_{x}^{\delta}) \le \min\left(\frac{\text{meas}(\Omega) - r}{4}, \frac{r}{4}\right), \qquad \text{ for all } x \in \Omega^{\delta} := \{x \in \Omega : \text{dist}(x, \partial \Omega) \ge \delta\}.
\end{equation}
Because $K^{\delta}$ is a compact subset of $\Omega \times \Omega$ on which the Green's function $G(x, y)$ is continuous and strictly positive (see Proposition \ref{greenproperties}), we obtain that for some positive constant $m$:
\begin{equation}\label{Gpositive}
G(x, y) \ge m > 0 \qquad \text{ for $(x, y) \in K^{\delta}$}.
\end{equation}

We will use \eqref{Gpositive} to derive a contradiction, by showing that most of the vacuum set (due to the positivity of the Green's function) is exponentially shrinking, which will contradict the limit in \eqref{rdef} since $r > 0$. By \eqref{greenrep}, $\displaystyle
W(t, x) = a\int_{\Omega} G(x, y) n^{\gamma}(t, y) dy$. Observe that since $n(t, x) \ge 0$ and $G(x, y)$ is strictly positive on $\Omega \times \Omega$ by Proposition \ref{greenproperties}, we can obtain by \eqref{Gpositive} that:
\begin{equation*}
W(t, x) \ge am\int_{K_{x}^{\delta}} n^{\gamma}(t, y) dy, \qquad \text{ for } x \in \Omega \text{ with } \text{dist}(x, \partial \Omega) \ge \delta.
\end{equation*}
By combining \eqref{measT} and \eqref{Kdeltameas}, we see that for all $t \ge \tau$ and for all $x \in \Omega$ with $\text{dist}(x, \partial \Omega) \ge \delta$:
\begin{equation*}
\text{meas}\Big(\{n(t, \cdot) > \nu\} \cap K^{\delta}_{x}\Big) \ge \frac{\text{meas}(\Omega) - r}{4},
\end{equation*}
and hence, for all $t \ge \tau$:
\begin{equation*}
W(t, x) \ge am\left(\frac{\text{meas}(\Omega) - r}{4}\right) \nu^{\gamma} := w > 0, \qquad \text{ for } x \in \Omega^{\delta} := \{x \in \Omega: \text{dist}(x, \partial \Omega) \ge \delta\}.
\end{equation*}

Note that the condition \eqref{Kdeltameas} implies that $\text{meas}(\Omega \setminus \Omega^{\delta})\le r/4$, and thus, $W(t, x) \ge w > 0$ for all $x \in \Omega^\delta$, which is a set of measure at least $\displaystyle \text{meas}(\Omega) - \frac{r}{4}$. So since $\{n(t, \cdot) = 0\}$ has measure at least $r$ for all $t$ by Lemma \ref{step1}, we have that for all $t \ge \tau$:
\begin{equation}\label{3r4}
\text{meas}\Big(\{n(t, \cdot) = 0\} \cap \{W(t, \cdot) \ge w\}\Big) \ge \frac{3r}{4},
\end{equation}
and on this set $\{n(t, \cdot) = 0\} \cap \{W(t, \cdot) \ge w\}$, which has measure at least $3r/4$ for all $t \ge \tau$, $\text{div}(\bd{u}) = -\mu^{-1} W \le -\mu^{-1}w$. Therefore, by the vacuum bound in \eqref{measT} and \eqref{3r4}, for all $t \ge \tau$:
\begin{equation*}
\text{meas}\Big(\{n(t, \cdot) = 0\} \cap \{W(t, \cdot) < w\}\Big) \le \frac{5r}{4} - \frac{3r}{4} = \frac{r}{2},
\end{equation*}
on this set, $\text{div}(\bd{u}) = -\mu^{-1}W \le 0$ by Proposition \ref{nonnegativeW}. To summarize, for each $t \ge \tau$, the vacuum $\{n(t, \cdot) = 0\}$ consists of two disjoint sets:
\begin{itemize}
\item A vacuum set $V_{1}(t) := \{n(t, \cdot) = 0\} \cap \{W(t, \cdot) \ge w\}$ on which $W$ is positive and strictly bounded away from zero, with 
\begin{equation}\label{V1prop}
\text{meas}(V_{1}(t)) \ge 3r/4, \qquad \text{div}(\bd{u}) \le -\mu^{-1}w < 0 \text{ on } V_{1}(t).
\end{equation}
\item A vacuum set $V_{2}(t) := \{n(t, \cdot) = 0\} \setminus V_{1}(t)$ of measure $\text{meas}(V_{2}(t)) \le r/2$, on which we only know that $\text{div}(\bd{u}) \le 0$. 
\end{itemize}
Let $V_{t} := \{x \in \Omega : n(t, x) = 0\}$ be the vacuum. By properties of the ODE \eqref{nbarode} and \eqref{ncomparison}, we have that $\Phi_{t}(V_{s}) = V_{s + t}$ and since $\text{div}(\bd{u}) \le 0$ by \eqref{rewritten} on the vacuum set, we conclude that $\displaystyle \frac{d}{dt}\Big(\text{meas}(V_{t})\Big) \le 0$. In addition, using \eqref{measidentityode}, for $t \ge \tau$:
\begin{align*}
\frac{d}{dt}(\text{meas}(V_{t})) &= \int_{\Omega} 1_{V_{\tau}}(x) \text{div}\Big(\bd{u}(t, \Phi_{t - \tau}(x))\Big) \text{exp}\left(\int_{0}^{t - \tau} \text{div}\Big(\bd{u}(s, \Phi_{s}(x))\Big) ds\right) dx \\
&\le \int_{\Omega} 1_{\Phi_{t - \tau}^{-1}(V_{1}(t))}(x) \text{div}\Big(\bd{u}(t, \Phi_{t - \tau}(x))\Big) \text{exp}\left(\int_{\tau}^{t} \text{div}\Big(\bd{u}(s, \Phi_{s}(x))\Big) ds\right) dx,
\end{align*}
since $\Phi_{t - \tau}^{-1}(V_{1}(t)) \subset \Phi_{t - \tau}^{-1}(V_{t}) = V_{\tau}$, $\Phi_{t - \tau}(V_{\tau}) = V_{t}$, and $\text{div}(\bd{u}) \le 0$ on the vacuum set. Since $\text{div}(\bd{u}) \le -\mu^{-1}w < 0$ on $V_{1}(t)$, we estimate that for all $t \ge \tau$:
\begin{align*}
\frac{d}{dt}(\text{meas}(V_{t})) &\le -\mu^{-1}w \int_{\Omega} 1_{\Phi_{t - \tau}^{-1}(V_{1}(t))}(x) \text{exp}\left(\int_{0}^{t - \tau} \text{div}\Big(\bd{u}(s, \Phi_{s}(x))\Big) ds\right) dx \\
&= -\mu^{-1}w \cdot \text{meas}(V_{1}(t)) \le -\frac{3}{4}\mu^{-1}wr < 0,
\end{align*}
by \eqref{V1prop}. Since $\displaystyle -\frac{3}{4}\mu^{-1}wr$ is a negative constant, this contradicts the initial assumption that $F_{t}(0) := \text{meas}(V_{t}) \ge r$ for all $t \ge 0$. This establishes the result that $r = 0$.
\end{proof}

Combining Lemma \ref{step1}, Lemma \ref{step2}, and Lemma \ref{step3} establishes Proposition \ref{vanishingcdf}, that there exists some $\nu > 0$ such that $\lim_{t \to \infty} F_{t}(\nu) := \lim_{t \to \infty} \text{meas}\Big(\{x \in \Omega : 0 \le n(t, x) \le \nu\}\Big) = 0$.

\subsection{Proof of Theorem \ref{longtime}}\label{finalproof}

Equipped with Proposition \ref{vanishingcdf}, we can now establish the long-time behavior result.

\begin{proof}[Proof of Theorem \ref{longtime}]

By Theorem \ref{vanishingcdf}, there exists $\nu > 0$, such that $0 < \nu < \eta$ and 
\begin{equation}\label{nuresult}
\lim_{t \to \infty} F_{t}(\nu) := \lim_{t \to \infty} \text{meas}\{x \in \Omega : n(t, x) \le \nu\} = 0.
\end{equation}
Furthermore, by Proposition \ref{maxprinciple}, for each $\lambda > n^{*}$, there exists a corresponding $T_{\lambda} > 0$ for which $0 \le n(t, x) \le \lambda$ for all $t \ge T_{\lambda}$. (Though Proposition \ref{maxprinciple} is for smooth solutions, this maximum bound holds for strong solutions with $n \in C(0, T; W^{1, p}(\Omega))$ and $W \in C(0, T; W^{3, p}(\Omega))$, which are unique, by an approximation argument, for example involving artificial viscosity as in \cite{WeberTrivisa, TrivisaWeberDrug}.)

Fix an arbitrary choice of $\lambda > n^{*}$ and note that for $\hat{G}(n) := \alpha n - \beta n^{1 + \gamma \theta}$, where $\hat{G}(n) = nG(n)$ for $G(n)$ as defined in \eqref{Gdef}:
\begin{equation*}
\frac{d}{dt}\left(\int_{\Omega} (\lambda - n) dx \right) = -\int_{\Omega} (\alpha n - \beta n^{1 + \gamma \theta}) = -\text{meas}(\Omega) \cdot \hat{G}(\lambda) + \int_{\Omega} \Big(\hat{G}(\lambda) - \hat{G}(n)\Big). 
\end{equation*}
Then, there exists a positive constant $c_{\nu}$ (independent of $\lambda$) such that for $\hat{G}(n) := \alpha n - \beta n^{1 + \gamma \theta}$:
\begin{equation}\label{lipG}
0 \le c_{\nu}(\lambda - n) \le \hat{G}(n) - \hat{G}(\lambda), \qquad \text{ for all } \nu \le n \le \lambda.
\end{equation}
To see this, we observe that due to the fact that $\hat{G}(z) := \alpha n - \beta n^{1 + \gamma \theta}$ is concave down, 
\begin{multline*}
0 \le \frac{\hat{G}(\nu)}{n^{*} - \nu}(\lambda - n) = \frac{\hat{G}(\nu) - \hat{G}(n^{*})}{n^{*} - \nu}(\lambda - n) \le \frac{\hat{G}(\nu) - \hat{G}(\lambda)}{\lambda - \nu}(\lambda - n) \le \hat{G}(n) - \hat{G}(\lambda), \\
\qquad \text{ for all } \nu \le n \le \lambda,
\end{multline*}
so we can set $\displaystyle c_{\nu} := \frac{\hat{G}(\nu)}{n^{*} - \nu}$. Then, we estimate that for $t \ge T_{\lambda}$:
\begin{equation}\label{Tlambdadiff}
\frac{d}{dt}\left(\int_{\Omega} (\lambda - n) dx\right) \le c_{\nu} \lambda F_{t}(\nu) - \text{meas}(\Omega) \cdot \hat{G}(\lambda) -c_{\nu}\int_{\Omega} (\lambda - n).
\end{equation}
We emphasize that $-\text{meas}(\Omega) \cdot \hat{G}(\lambda) > 0$ since $\hat{G}(\lambda) < 0$.

We want to show that $\displaystyle \lim_{t \to \infty} \int_{\Omega} (n^{*} - n) = 0$. Thus, consider an arbitrary $\epsilon > 0$, and choose $\lambda > n^{*}$ sufficiently close to $n^{*}$ such that 
\begin{equation}\label{lambdaeps}
\max\Big(\text{meas}(\Omega) \cdot (\lambda - n^{*}), c_{\nu}^{-1} \text{meas}(\Omega) \cdot (-\hat{G}(\lambda))\Big) < \frac{\epsilon}{4}. 
\end{equation}
Then, given this choice of $\lambda$, we have an associated time $T_{\lambda}$ for which the differential inequality \eqref{Tlambdadiff} holds for all $t \ge T_{\lambda}$. By \eqref{nuresult}, we choose a time $T_{\epsilon}$ sufficiently large such that 
\begin{equation}\label{Teps}
T_{\epsilon} \ge T_{\lambda}, \quad \text{ and } \quad \lambda F_{t}(\nu) < \frac{\epsilon}{4} \text{ for all } t \ge T_{\epsilon}.
\end{equation}
Then, we can consider the differential inequality \eqref{Tlambdadiff} from the initial time $T_{\epsilon}$, where we note that because $0 \le n(t, \cdot) \le \lambda$ for all $t \ge T_{\epsilon} \ge T_{\lambda}$, we have that $\displaystyle \int_{\Omega} (\lambda - n) \ge 0$ for all $t \ge T_{\epsilon}$. We hence obtain from \eqref{Tlambdadiff}, \eqref{lambdaeps}, and \eqref{Teps} that for all $t \ge T_{\epsilon}$:
\begin{equation*}
\frac{d}{dt}\left(\int_{\Omega} (\lambda - n) dx\right) \le \frac{\epsilon}{2}c_{\nu} - c_{\nu} \int_{\Omega} (\lambda - n) dx.
\end{equation*}
By solving this differential inequality, we obtain that
\begin{equation*}
0 \le \int_{\Omega} (\lambda - n)(t) \le \frac{\epsilon}{2} + Ce^{-c_{\nu}(t - T_{\epsilon})}, \qquad \text{ for } t \ge T_{\epsilon},
\end{equation*}
for some constant $C$, depending on the initial data $\displaystyle \int_{\Omega} (\lambda - n)(T_{\epsilon}, \cdot) dx$ at time $T_{\epsilon}$.
So for a sufficiently large time $\tau_{\epsilon} \ge T_{\epsilon}$, we have that
\begin{equation}\label{3eps4}
0 \le \int_{\Omega} (\lambda - n)(t) \le \frac{3\epsilon}{4}, \qquad \text{ for all } t \ge \tau_{\epsilon} \ge T_{\epsilon} \ge T_{\lambda}.
\end{equation}
We have that for all $t \ge T_{\lambda}$, for which we recall that $0 \le n(t, x) \le \lambda$:
\begin{align*}
\left|\int_{\Omega} (\lambda - n) - \int_{\Omega} |n^{*} - n|\right| &\le \left|\int_{n \ge n^{*}} \Big((\lambda - n) - (n - n^{*})\Big)\right| + \left|\int_{n < n^{*}} (\lambda - n^{*})\right| \\
&\le \int_{n \ge n^{*}} (\lambda - n^{*}) + \int_{n < n^{*}} (\lambda - n^{*}) \\
&\le \text{meas}(\Omega) (\lambda - n^{*}),
\end{align*}
so combining this with \eqref{lambdaeps} and \eqref{3eps4}, we obtain that $\displaystyle 0 \le \int_{\Omega} |n^{*} - n(t, x)| \le \epsilon$ for $t \ge \tau_{\epsilon}$, which establishes that $\displaystyle \lim_{t \to \infty} \int_{\Omega} |n(t, x) - n^{*}| = 0$. By using the upper bound from Proposition \ref{maxprinciple} that $0 \le n(t, x) \le M$, we also have that:
\begin{equation}\label{Lpconv}
\lim_{t \to \infty} \int_{\Omega} |n(t, x) - n^{*}|^{q} = 0, \qquad \text{ for all } 1 \le q < \infty.
\end{equation}

Consider arbitrary $1 \le q < \infty$. To show that $\lim_{t \to \infty} \|W(t, x) - W^{*}\|_{W^{2, q}(\Omega)} = 0$ for $W^{*} = a(n^{*})^{\gamma}$, note that
\begin{equation*}
-\mu \Delta (W - W^{*}) + (W - W^{*}) = a(n^{\gamma} - (n^{*})^{\gamma}), \qquad \nabla (W - W^{*}) \cdot \bd{n}|_{\partial \Omega} = 0.
\end{equation*}
Since $n \to n^{*}$ in $L^{q}(\Omega)$ as $t \to \infty$, we have by the mean value theorem and the maximum principle (Proposition \ref{maxprinciple}) that $\lim_{t \to \infty} \|a(n^{\gamma} - (n^{*})^{\gamma})\|_{L^{q}(\Omega)} = 0$ and thus by Calder\'{o}n-Zygmund estimates (Proposition \ref{CZest} and \ref{nonnegativeW}), we conclude that $\lim_{t \to \infty} \|W - W^{*}\|_{W^{2, q}(\Omega)} = 0$ also, for all $1 \le q < \infty$.
\end{proof}

\section{Conclusion}

Finally, we summarize the results in this manuscript and mention some interesting directions for future investigation that arise naturally from our results. We have shown, for sufficiently large viscosity parameter $\mu$, the existence of nontrivial global strong (and even smooth) solutions for data with sufficiently small gradient, which can intuitively be thought of as initial data with small deviations, but with no restriction on the smooth initial data nonnegative $n_{0}(x)$ itself (only on its gradient), other than that it is strictly positive (bounded away from vacuum). This is possible due to the additional dissipation arising from the effects of cell death in the growth rate function $G(p)$. While it was previously known that global weak solutions exist \cite{WeberTrivisa, TrivisaWeberDrug} for $n_{0} \in L^{\infty}(\Omega)$, we present in this manuscript new results on global strong solutions. This is a delicate matter, in particular because it is well-known that transport equations may exhibit singularities and hence, in general are best understood in a weak (or even a renormalized weak) sense, see \cite{DeLellis, DiPernaLions}. Even though it has been shown that the presence of damping may give rise to strong solutions for small data, for example to the 3D compressible Euler equations, even for these equations, singularities can form for arbitrary initial data, see the discussion in \cite{Sideris} for example. 

It would be particularly interesting to characterize more specific conditions for which global strong solutions exist for \eqref{equations}. The fact that our result in Theorem \ref{global} holds for sufficiently large viscosity $\mu$, initial data away from vacuum $n_{0}(x) > 0$, and small gradient initial data, suggests future investigation of understanding whether nontrivial strong solutions exist for arbitrary viscosity parameters $\mu$, or in the case of vacuum. The difficulty for considering initial data with vacuum is that the dissipative term $\displaystyle -\beta(\gamma \theta + 1)p \int_{0}^{t} \int_{\Omega} n^{\gamma \theta} n^{p}_{j}$ in \eqref{gradequality} vanishes when $n = 0$. In addition, to make this dissipative term have ``enough" dissipation to absorb the effects of the other nonlinear terms in the estimate, we require $\mu$ to be sufficiently large. It would therefore be interesting to consider arbitrary $\mu$, and also the singular limit as $\mu \to 0$. 

We have also shown a long-time convergence result for global strong solutions in Proposition \ref{longtime}. It would be interesting to see if this result extends more generally to weak solutions, which always exist globally for $L^{\infty}(\Omega)$ nonnegative initial data. In particular, the level set analysis in Section \ref{vacuumlongtime} relies primarily on the flow map $\Phi_{t}(x)$ for the vector field $\bd{u} = -\nabla W$. Even at the weak solution level, we have that $n \in L^{\infty}(0, T; L^{\infty}(\Omega))$, and hence, $W \in L^{\infty}(0, T; W^{2, p}(\Omega))$ for all $1 \le p < \infty$ by Calder\'{o}n-Zygmund estimates. Then, $\text{div}(\bd{u}) = -\Delta W = \mu^{-1}(an^{\gamma} - W)$ is in $L^{\infty}(0, T; L^{\infty}(\Omega))$. This is sufficient regularity to apply the transport theory of DiPerna and Lions \cite{DeLellis, DiPernaLions}, and hence obtain a \textit{generalized flow map} for the Sobolev vector field $\bd{u} := -\nabla W \in L^{\infty}(0, T; W^{1, p}(\Omega))$ for all $1 \le p < \infty$. While the low (non-Lipschitz) regularity of the vector field $\bd{u}$ and the generalized flow map may not be sufficient to directly obtain the explicit expansion/contraction of volume formulas in Proposition \ref{divuprop} for example, a similar argument of approximating/regularizing the equations \eqref{equations} and passing to a limit, might give insight into long-time behavior of global weak solutions, via a result analogous to Theorem \ref{longtime}.

\section*{Acknowledgements}

J. Kuan is supported by the National Science Foundation under the Mathematical Sciences Postdoctoral Research Fellowship (MSPRF) DMS-2303177.
K. Trivisa gratefully acknowledges the support by the National Science Foundation under the grants DMS-2231533 and DMS-2008568.

\section{Appendix: Analysis of an elliptic equation}\label{appendix}

In this appendix, we analyze the elliptic equation with Neumann boundary conditions:
\begin{equation}\label{appendixeqn}
-\mu \Delta W + W = F \text{ on } \Omega, \qquad \nabla W \cdot \bd{n}|_{\partial \Omega} = 0.
\end{equation}
The goal of the analysis will be to deduce properties of the Green's function (in particular, continuity and positivity properties) that will enable use to show that the vacuum vanishes in long time for global strong solutions to \eqref{equations}, in Lemma \ref{step3}. To do this, we first discuss the fundamental solution for the operator $(-\mu \Delta + I)$ on $\R^{d}$ for $d = 2, 3$, and then use these results on the fundamental solution to deduce important properties of the Green's function for \eqref{appendixeqn}, which we need for the proof of our long-time behavior result.

\subsection{The fundamental solution on $\R^{d}$} 

First, we find the fundamental solution for the operator $(-\mu \Delta + I)$ on $\R^{d}$ for $d = 2, 3$, which is a function $\Psi(x)$ such that 
\begin{equation*}
-\mu \Delta \Psi + \Psi = \delta_{0} \qquad \text{ on } \R^{d}
\end{equation*}
in the distributional sense. We can find such a solution by noting that $-\mu \Delta \Psi + \Psi = 0$ away from the origin. Writing the equation in polar coordinates for a radially symmetric function, $\Psi(x) = w(\mu^{-1/2} r)$, we obtain for $r \ne 0$:
\begin{equation*}
r^{2}w''(r) + (d - 1) r w'(r) - r^{2} w(r) = 0.
\end{equation*}

\medskip

\noindent \textbf{Fundamental solution for $d = 2$.} By the definition of Bessel functions, we obtain the fundamental solution in terms of a \textbf{modified Bessel function of the second kind} for a constant $c_{2}$:
\begin{equation*}
w(r) = c_{2} K_{0}(r), \qquad \Psi(x) = c_{2}K_{0}(\mu^{-1/2} |x|),
\end{equation*}
where $K_{0}$ is a modified Bessel function of the second kind, see Section 10.25 of \cite{DLMF}.

We can solve for $c_{2}$ by considering an arbitrary test function $\phi \in C_{c}^{\infty}(\R^{2})$, integrating by parts, and omitting a ball around the origin:
\begin{align*}
\phi(0) = -\mu \int_{\R^{2}} \Psi(x)& \Delta \phi + \int_{\R^{2}} \Psi(x) \phi \\
&= \lim_{\epsilon \to 0} \left(\int_{\R^{2} \setminus B_{\epsilon}(0)} (-\mu \Delta \Psi + \Psi) \phi + \int_{\partial B_{\epsilon}(0)} \Psi(x) \frac{\partial \phi}{\partial \nu} - \int_{\partial B_{\epsilon}(0)} \frac{\partial \Psi}{\partial \nu} \phi\right) \\
&= \lim_{\epsilon \to 0} \left(\int_{\partial B_{\epsilon}(0)} \Psi(x) \frac{\partial \phi}{\partial \nu} - \int_{\partial B_{\epsilon}(0)} \frac{\partial \Psi}{\partial \nu} \phi\right).
\end{align*}
By (10.29.3), (10.30.2), and (10.30.3) in \cite{DLMF}, $K_{0}(r) \sim -\log(r)$ and $K_{0}'(r) \sim -1/r$, as $r \to 0$. Therefore, since $\nabla \phi$ is globally bounded due to the compact support of $\phi$, we have
\begin{align}\label{smoothgreen1}
\left|\int_{\partial B_{\epsilon}(0)} \Psi(x)\frac{\partial \phi}{\partial \nu}\right| \le -c_{2} \Big(\log(\mu^{-1/2} \epsilon)\Big) \cdot (2\pi \epsilon) \max_{x \in \R^{2}} |\nabla \phi| \to 0, \qquad \text{ as } \epsilon \to 0,
\end{align}
\begin{equation*}
-\lim_{\epsilon \to 0} \int_{\partial B_{\epsilon}(0)} \frac{\partial \Psi}{\partial \nu} \phi = -c_{2} (\mu^{-1/2} \epsilon)^{-1} \cdot \phi(0) \cdot 2\pi \epsilon = -2\pi\mu^{1/2} c_{2}\phi(0).
\end{equation*}
So we conclude that
\begin{equation*}
\phi(0) = -\mu \int_{\R^{2}} \Psi(x) \Delta \phi + \int_{\R^{2}} \Psi(x) \phi = \lim_{\epsilon \to 0} \left(\int_{\partial B_{\epsilon}(0)} \Psi(x) \frac{\partial \phi}{\partial \nu} - \int_{\partial B_{\epsilon}(0)} \frac{\partial \Psi}{\partial \nu} \phi\right) = -2\pi\mu^{1/2} c_{2}\phi(0).
\end{equation*}
Therefore, $\displaystyle c_{2} = -\frac{1}{2\pi \mu^{1/2}}. $
\bigskip

\noindent \textbf{Fundamental solution for $d = 3$.} In the case of $d = 3$, we obtain by inspection $w(r) = e^{-r}/r$. Therefore, the fundamental solution for a dimensional constant $c_{3}$ is:
\begin{equation*}
\Psi(x) = c_{3} \mu^{1/2} \left(\frac{e^{-\mu^{-1/2}|x|}}{|x|}\right).
\end{equation*}

As in the case of $d = 2$, we calculate the constant $c_{3}$ by noting that
\begin{equation}\label{smoothgreen2}
\phi(0) = -\mu \int_{\R^{3}} \Psi(x) \Delta \phi + \int_{\R^{3}} \Psi(x) \phi = \lim_{\epsilon \to 0} \left(\int_{\partial B_{\epsilon}(0)} \Psi(x) \frac{\partial \phi}{\partial \nu} - \int_{\partial B_{\epsilon}(0)} \frac{\partial \Psi}{\partial \nu} \phi\right).
\end{equation}
We compute that
\begin{equation*}
\left|\int_{\partial B_{\epsilon}(0)} \Psi(x) \frac{\partial \phi}{\partial \nu}\right| \le c_{3}\mu^{1/2}\epsilon^{-1} e^{-\mu^{-1/2}\epsilon} 4\pi \epsilon^{2} \max_{x \in \R^{3}} |\nabla \phi(x)| \to 0, \qquad \text{ as } \epsilon \to 0,
\end{equation*}
\begin{equation}\label{normalizationK}
-\lim_{\epsilon \to 0} \int_{\partial B_{\epsilon}(0)} \frac{\partial \Psi}{\partial \nu} \phi = \lim_{\epsilon \to 0}  c_{3}\mu^{1/2} \left(\epsilon^{-2} e^{-\mu^{1/2}\epsilon} + \mu^{-1/2} \epsilon^{-1} e^{-\mu^{1/2}\epsilon}\right) 4\pi \epsilon^{2} \phi(0) = 4\pi \mu^{1/2} c_{3} \phi(0).
\end{equation}
Therefore, we conclude that $\displaystyle c_{3} = \frac{1}{4\pi \mu^{1/2}}$. 

\bigskip

\noindent \textbf{Conclusion.} Thus, we have that the fundamental solution is:
\begin{equation}\label{fundsoln}
\Psi(x) = \begin{cases}
    -c_{2}K_{0}(\mu^{-1/2}|x|), \qquad \text{ for } d = 2, \\
    \displaystyle c_{3} \left(\frac{e^{-\mu^{1/2}|x|}}{|x|}\right), \qquad \text{ for } d = 3,
\end{cases}
\end{equation}
where the constants $c_{2}$ and $c_{3}$ are given by $c_{2} = (2\pi)^{-1}\mu^{-1/2}, c_{3} = (4\pi)^{-1}\mu^{-1/2}$.

\subsection{Analysis of the Green's function}

We use the fundamental solution \eqref{fundsoln} to analyze the Green's function for the operator $-\mu \Delta + I$ posed on a bounded domain $\Omega \subset \R^{d}$ with smooth boundary. We recall that the Green's function $G(x, y): \Omega \times \Omega \to \R$ associated to the Neumann problem for the operator $-\mu \Delta + I$ on $\Omega$, is a function satisfying for each fixed but arbitrary $x \in \Omega$:
\begin{equation*}
-\mu \Delta_{x} G(x, y) + G(x, y) = \delta_{y}, \qquad \nabla_{x}G(x, y) \cdot \bd{n}|_{\partial \Omega} = 0.
\end{equation*}
Here, $\delta_{x}$ denotes the Dirac delta distribution centered at the point $x \in \Omega$. The Green's function is useful because it can be used to construct the (unique) solution to the inhomogeneous Neumann problem $-\mu \Delta W + W = f(x)$ and $\nabla W \cdot \bd{n}|_{\partial \Omega} = 0$ via the following integration:
\begin{equation*}
W(x) = \int_{\Omega} G(x, y) f(y) dy.
\end{equation*}

Given a general bounded domain $\Omega$ with smooth boundary, we can construct the Green's function $G(x, y)$ via the fundamental solution $K(x)$ computed in \eqref{fundsoln} as follows. Define a \textit{corrector function} $\varphi(x, y)$ as the solution to the Neumann problem:
\begin{equation}\label{corrector}
-\mu \Delta_{x} \varphi(x, y) + \varphi(x, y) = 0, \qquad \nabla_{x} \varphi(x, y) \cdot \bd{n}|_{\partial \Omega} = \nabla_{x} \Psi(\cdot - y) \cdot \bd{n} \Big\vert_{\partial \Omega}.
\end{equation}
Then, the Green's function can be expressed in terms of the fundamental solution as
\begin{equation*}
G(x, y) = \Psi(x) - \varphi(x, y).
\end{equation*}
We can use this characterization of the Green's function to establish some important properties of $G(x, y)$ that will be useful in our analysis of the tumor growth model under consideration. Related to this is the positivity of the Green's function, and to show this, we first show that the operator $(-\mu \Delta + 1)W$ in \eqref{elliptic} has a strict positivity property:

\begin{lemma}\label{ellipticmax}
    Suppose that $W \ge 0$ satisfies
    \begin{equation}\label{ellipticzero}
    -\mu\Delta W + W = 0
    \end{equation}
    on an open ball $B$, where $W \in C^{2}(B) \cap C(\overline{B})$. Then, either $W(x) > 0$ everywhere on $B$, or $W$ is identically zero on $B$.
\end{lemma}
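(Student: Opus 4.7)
The plan is to recast the equation in a form where a standard strong minimum principle for linear second-order elliptic operators applies directly, and then simply read off the dichotomy. Since $\mu > 0$, the equation $-\mu\Delta W + W = 0$ on $B$ is equivalent to
\begin{equation*}
\Delta W - \mu^{-1} W = 0 \quad \text{on } B,
\end{equation*}
i.e., $\tilde{L}W = 0$ with $\tilde{L} = \Delta - \mu^{-1} I$. The zero-order coefficient $-\mu^{-1}$ is nonpositive, which is precisely the sign condition required for the classical strong maximum/minimum principle (Gilbarg--Trudinger, Theorem 3.5) to apply in its usual form.

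Next, I would argue the dichotomy by contrapositive. Suppose $W$ is \emph{not} identically zero on $B$, and suppose for contradiction that there exists $x_{0} \in B$ with $W(x_{0}) = 0$. Since $W \ge 0$ on $B$, the value $W(x_{0}) = 0$ is a minimum of $W$ over $B$, and this minimum is nonpositive (in fact zero), attained at the interior point $x_{0}$ of the connected open set $B$. Applying the strong minimum principle to $\tilde{L}W = 0 \le 0$ (so $W$ is a supersolution of $\tilde{L}W \le 0$), we conclude that $W$ must be constant on $B$. Since $W(x_{0}) = 0$, this constant is zero, so $W \equiv 0$ on $B$, contradicting the assumption that $W$ is not identically zero. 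Hence no such $x_{0}$ exists, i.e., $W > 0$ everywhere on $B$.

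This yields exactly the stated dichotomy: either $W$ vanishes identically on $B$, or $W > 0$ pointwise on $B$. The proof is essentially a one-line invocation of a textbook result, so there is no substantive obstacle; the only point requiring a modicum of care is verifying the sign conventions, namely that rewriting the equation in the form $\Delta W + cW = 0$ yields the nonpositive coefficient $c = -\mu^{-1}$ needed to apply the strong minimum principle to a nonnegative supersolution with an interior zero. (As an alternative, one could equivalently invoke the Harnack inequality for nonnegative solutions of uniformly elliptic equations, which gives $\sup_{K} W \le C \inf_{K} W$ on every compact $K \subset B$ and immediately yields the same dichotomy.)
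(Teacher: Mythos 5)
Your proof is correct and follows essentially the same route as the paper: rewrite the operator so the zero-order coefficient has the right sign, invoke the strong minimum/maximum principle at an interior zero of the nonnegative $W$ to force $W$ constant, and observe that the only constant solution is zero. The paper cites Evans's Theorem 4 (pg.~352) rather than Gilbarg--Trudinger, and states the contradiction in one line, but the argument is identical; the extra care you take with the sign convention is helpful but not a deviation in substance.
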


\begin{proof}
By the strong maximum principle (cf. Theorem 4 on pg.~352 in \cite{Evans}), if $W(\bd{x}_0) = 0$ at an interior point of $B$ and is not identically zero on $B$, then $W$ attains a nonpositive minimum on the interior of $B$ and hence $W$ must be constant. However, this is a contradiction, since any constant solution to the equation \eqref{ellipticzero} must be identically zero.
\end{proof}

Now, we have the components needed to establish these properties of the Green's function.

\begin{proposition}\label{greenproperties}
The Green's function $G(x, y)$ for $-\mu \Delta + I$ has the following properties:
\begin{itemize}
    \item \textbf{Symmetry.} $G(x, y) = G(y, x)$ for $(x, y) \in (\Omega \times \Omega) - D$, where the diagonal $D := \{(x, y) \in \Omega \times \Omega : x = y\}$.
    \item \textbf{Continuity off of the diagonal.} $G(x, y)$ is continuous as a function of $(x, y) \in (\Omega \times \Omega) \setminus D$. Furthermore, $\lim_{(x, y) \to (z, z)} G(x, y) = \infty$ for any point $z \in \Omega$, where the limit is taken along points in $\Omega \times \Omega$.
    \item \textbf{Positivity of the Green's function.} $G(x, y) > 0$ for all $x, y \in \Omega$.
\end{itemize}
\end{proposition}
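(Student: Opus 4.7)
The plan is to prove the three properties in order, exploiting the decomposition $G(x,y) = \Psi(x - y) - \varphi(x,y)$ from the construction preceding the statement, where $\Psi$ is the fundamental solution in \eqref{fundsoln} and $\varphi(\cdot, y)$ solves the corrector Neumann problem \eqref{corrector}. Throughout, I rely on the explicit asymptotics of $\Psi$ near the origin (already exploited in the normalization computations \eqref{smoothgreen1}--\eqref{normalizationK}) together with Calder\'{o}n--Zygmund estimates (Proposition \ref{CZest}), the nonnegativity result of Proposition \ref{nonnegativeW}, and the strict positivity Lemma \ref{ellipticmax}.

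For symmetry, I would fix distinct $y_1, y_2 \in \Omega$, set $G_i(\cdot) := G(\cdot, y_i)$, and apply Green's second identity to $G_1, G_2$ on the pierced domain $\Omega \setminus (B_\varepsilon(y_1) \cup B_\varepsilon(y_2))$. The boundary integral on $\partial\Omega$ vanishes identically because of the Neumann condition satisfied by both $G_i$, while the integral over $\Omega \setminus (B_\varepsilon(y_1) \cup B_\varepsilon(y_2))$ of $G_1 \Delta G_2 - G_2 \Delta G_1$ is zero since $-\mu \Delta G_i + G_i = 0$ away from $y_i$. Passing $\varepsilon \to 0$, the contributions from $\partial B_\varepsilon(y_i)$ extract the values $G_1(y_2)$ and $G_2(y_1)$ by the same delta-concentration computation that determined $c_2$ and $c_3$ in \eqref{smoothgreen1}--\eqref{normalizationK}; here one uses that $\varphi$ is smooth in a neighborhood of $y_i$, so only the fundamental-solution part contributes to the limit. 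This yields $G(y_2,y_1) = G(y_1,y_2)$.

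For continuity off the diagonal and the diagonal blow-up, I would handle the two summands separately. The function $\Psi(x-y)$ is jointly smooth on $(\Omega \times \Omega) \setminus D$ by inspection of \eqref{fundsoln}, and satisfies $\Psi(x-y) \to +\infty$ as $|x-y| \to 0$. For the corrector, for each interior $y$ the Neumann data $g_y := \nabla_x \Psi(\cdot - y) \cdot \bd{n}|_{\partial\Omega}$ is a smooth function on $\partial\Omega$ that depends continuously (indeed smoothly) on $y$ in the operator norm on $W^{m, p}(\partial \Omega)$. Standard continuous dependence for the Neumann problem (via Proposition \ref{CZest} applied to differences of correctors) gives joint continuity of $\varphi$ on $\Omega \times \Omega$, and in particular boundedness of $\varphi$ on compact subsets. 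Subtracting yields continuity of $G$ off $D$ and the claimed blow-up at the diagonal.

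For positivity, I first establish $G \ge 0$ by a mollification argument: for fixed $y \in \Omega$ and a nonnegative smooth approximation $\delta_y^\varepsilon$ of the Dirac mass supported in $B_\varepsilon(y)$, the function $W_\varepsilon(x) := \int_\Omega G(x, z) \delta_y^\varepsilon(z) \, dz$ solves $-\mu \Delta W_\varepsilon + W_\varepsilon = \delta_y^\varepsilon$ with Neumann data, and thus $W_\varepsilon \ge 0$ by Proposition \ref{nonnegativeW}; letting $\varepsilon \to 0$ yields $G(x,y) \ge 0$ for all $(x, y) \notin D$ by the continuity established in the previous step. To upgrade to strict positivity, I argue by contradiction: if $G(x_0, y_0) = 0$ at some interior $x_0 \neq y_0$, then on a small ball $B \subset \Omega$ around $x_0$ with $y_0 \notin \overline{B}$, the function $G(\cdot, y_0)$ is a nonnegative solution of $-\mu \Delta W + W = 0$ that vanishes at an interior point, so Lemma \ref{ellipticmax} forces $G(\cdot, y_0) \equiv 0$ on $B$. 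Since $\Omega \setminus \{y_0\}$ is connected in dimensions $d = 2, 3$, a standard propagation-of-zeros argument (covering $\Omega \setminus \{y_0\}$ by overlapping balls on which Lemma \ref{ellipticmax} applies, and invoking continuity) forces $G(\cdot, y_0) \equiv 0$ on all of $\Omega \setminus \{y_0\}$, which contradicts the diagonal blow-up $\lim_{x \to y_0} G(x, y_0) = +\infty$ established above. The main obstacles are the delta-concentration limit in the symmetry argument (handled by the asymptotics of $\Psi$ and smoothness of $\varphi$) and the joint continuity of $\varphi$ in both variables, which is not automatic from its one-variable definition but follows from stability of Neumann solutions under smooth variations of the boundary data.
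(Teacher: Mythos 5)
Your symmetry and off-diagonal continuity arguments match the paper's route essentially step for step: Green's second identity on the pierced domain $\Omega \setminus (B_\varepsilon(y_1) \cup B_\varepsilon(y_2))$ with the Neumann condition killing the $\partial\Omega$ contribution, the normalization computation of \eqref{smoothgreen1}--\eqref{normalizationK} extracting the point values in the $\varepsilon \to 0$ limit with $\varphi$ smooth near the singular point, and continuity of $\varphi$ on all of $\Omega\times\Omega$ via continuous dependence of the Neumann problem on the boundary data. One small technical caution in the continuity part: Proposition \ref{CZest} as stated assumes the homogeneous Neumann condition $\nabla W\cdot\bd{n}|_{\partial\Omega}=0$, whereas the corrector and the differences of correctors carry inhomogeneous Neumann data; you would need either to lift the boundary data into the interior and recast as a zero-Neumann problem with modified right-hand side, or to invoke a Schauder-type estimate for the inhomogeneous Neumann problem directly, which is what the paper does (Theorem 6.30 of \cite{Gilbarg}). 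This is cosmetic but worth stating explicitly.

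The positivity argument is where you genuinely diverge, and I think productively so. The paper defines $S := \{(x,y)\in\Omega\times\Omega : G(x,y)>0\}$, observes $S$ is nonempty from the diagonal blow-up and open from continuity, and then asserts that Lemma \ref{ellipticmax} makes $S$ closed, invoking connectedness of $\Omega\times\Omega$ to conclude $S = \Omega\times\Omega$. That closedness step, however, implicitly needs $G(\cdot,y_0)\ge 0$ near a putative boundary point $x_0$ of $S$, since Lemma \ref{ellipticmax} is a dichotomy for \emph{nonnegative} solutions of $-\mu\Delta W + W = 0$; the paper does not first establish $G\ge 0$. You supply exactly this missing ingredient: the mollification $W_\varepsilon(x)=\int_\Omega G(x,z)\delta_y^\varepsilon(z)\,dz$ is the solution to the Neumann problem with nonnegative smooth datum $\delta_y^\varepsilon$, hence $W_\varepsilon\ge 0$ by (the proof of) Proposition \ref{nonnegativeW}, and letting $\varepsilon\to 0$ using off-diagonal continuity gives $G\ge 0$. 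With nonnegativity in hand, your contradiction argument (fix $y_0$, apply Lemma \ref{ellipticmax} on balls in $\Omega\setminus\{y_0\}$, propagate zeros by connectedness of $\Omega\setminus\{y_0\}$ in $d=2,3$, and contradict the blow-up at $y_0$) is a clean and fully justified route to strict positivity. In short: same key lemma and same blow-up input, but your version makes explicit the $G\ge 0$ step that the paper's open--closed connectedness argument tacitly relies on, at the modest cost of a mollification detour.
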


\begin{proof}
\noindent \textbf{Proof of symmetry.} Consider arbitrary points $y, z \in \Omega$ such that $y \ne z$ and let $B(x, r)$ denote the open ball of radius $r$ centered at the point $x$. Consider $r > 0$ sufficiently small so that $B(x, r) \cap B(z, r) = \varnothing$ and apply Green's identity to the set $$U_{y, z} := \Omega \setminus (B(y, r) \cup B(z, r))$$ to obtain:
\begin{footnotesize}
\begin{align}\label{greensymcalc}
0 &= \int_{U_{y, z}} \Big(-\mu\Delta_{x}G(x, y) + G(x, y)\Big)G(z, y) dx \nonumber \\
&= \int_{U_{y, z}} \Big(-\mu \Delta_{x} G(x, z) + G(x, z)\Big) G(x, y) ds + \mu \int_{\partial B(y, r) \cup \partial B(z, r)} \Big(G(x, z) \nabla_{x} G(x, y) - G(x, y) \nabla_{x} G(x, z)\Big) \cdot \bd{n} dS(x) \nonumber \\
&= \mu \int_{\partial B(y, r) \cup \partial B(z, r)} \Big(G(x, z) \nabla_{y}G(x, y) - G(x, y) \nabla_{y}G(x, z)\Big) \cdot \bd{n} dS(x),
\end{align}
\end{footnotesize}
where we used the definition of the Green's function to conclude that $-\mu \Delta_{x} G(x, y) + G(x, y) = 0$, along with integration by parts and the zero Neumann boundary condition on the Green's function. Here, we emphasize that $\bd{n}$ denotes the outer unit normal to the boundary of the balls $B(y, r)$ and $B(z, r)$. Because $G(x, y)$ for fixed $y$ is smooth away from $x = y$, we can conclude that $\displaystyle \lim_{r \to 0} \int_{\partial B(z, r)} G(z, y) \nabla_{y} G(x, y) \cdot \bd{n} = 0$ and similarly, $\displaystyle \lim_{r \to 0} \int_{\partial B(y, r)} G(x, y) \nabla_{y} G(x, z) \cdot \bd{n} = 0$, as in \eqref{smoothgreen1} and \eqref{smoothgreen2}. The result will be established if we show that
\begin{equation*}
\lim_{r \to 0} \int_{\partial B(y, r)} G(x, z) \nabla_{x} G(x, y) = -G(y, z), \qquad \lim_{r \to 0} \int_{\partial B(z, r)} G(x, y) \nabla_{x} G(x, z) = -G(z, y),
\end{equation*}
because then taking the limit as $r \to 0$ in \eqref{greensymcalc} gives the result that $G(y, z) = G(z, y)$.

So it suffices to show that
\begin{multline}\label{symreduction}
\lim_{r \to 0} \int_{\partial B(y, r)} G(x, z) \nabla_{x}G(x, y) \\
= \lim_{r \to 0} \int_{\partial B(y, r)} G(x, z) \nabla_{x}\Psi(x - y) + \lim_{r \to 0} \int_{\partial B(y, r)} G(x, z) \nabla_{x}\varphi(x, y) = -G(y, z),
\end{multline}
since an analogous calculation works for the same integral with $y$ and $z$ interchanged. By \eqref{corrector}, $\varphi$ is the solution to an elliptic equation with constant coefficients and smooth Neumann data on a smooth domain, so $\varphi(x, y)$ for fixed $y$ is a smooth function of $x$ on $\Omega$. Thus, the second term on the right-hand side of \eqref{symreduction} is zero. By the normalization of the fundamental solution $\Psi(x)$ in \eqref{normalizationK}, the first term on the right-hand side of \eqref{symreduction} is $-G(y, z)$, which concludes the proof of \eqref{symreduction} and hence, of the symmetry property.

\medskip

\noindent \textbf{Proof of off-diagonal continuity.} The goal is to show that $G(x, y) := \Psi(x - y) + \varphi(x, y)$ is continuous as a function of $(x, y) \in (\Omega \times \Omega) \setminus D$. To see this, one can see from the explicit form of the fundamental solution in \eqref{fundsoln} that $\Psi(x - y)$ is continuous on $(\Omega \times \Omega) \setminus D$. So it suffices to show that $\varphi(x, y)$ is continuous on $\Omega \times \Omega$ (and in fact, this is true, including the diagonal). Since $\lim_{(x, y) \to (z, z)} \Psi(x - y) = \infty$, this would also prove the assertion about the limit towards the diagonal being positive infinity since $\varphi(x, y)$ would be a bounded in value along on the diagonal, and continuous on all of $\Omega \times \Omega$ (including the diagonal).

So we show that $\varphi(x, y)$ is continuous on $\Omega \times \Omega$. Fix $x_0 \in \Omega$ and $\epsilon > 0$, and note that by smoothness properties of the fundamental solution $\Psi(x)$ in \eqref{fundsoln} (away from the singularity at the origin), there exists some $\delta$ sufficiently small with $B(x_0, \delta) \subset \Omega$ such that for all $x \in B(x_0, \delta)$:
\begin{equation*}
\|\nabla_{x} \Psi(x_0 - y) \cdot \bd{n} - \nabla_x \Psi(x - y) \cdot \bd{n}\|_{C^{1, \alpha}(\partial \Omega)} \le \epsilon, \qquad \text{ for } 0 < \alpha < 1.
\end{equation*}
Then, by Schauder estimates (Theorem 6.30 in \cite{Gilbarg}), we can show using the linearity of the equation \eqref{corrector} and the bound on the closeness of the Neumann boundary data, that for all $x \in B(x_0, \delta)$:
\begin{equation*}
\|\varphi(x_0, \cdot) - \varphi(x, \cdot)\|_{C(\overline{\Omega})} \le C\epsilon,
\end{equation*}
which shows that $\varphi(x, y)$ is continuous on $\Omega \times \Omega$ and hence completes the proof of the claim.

\medskip

\noindent \textbf{Proof of positivity.} Let $S := \{(x, y) \in \Omega \times \Omega : G(x, y) > 0\}$. $S$ is nonempty since we proved that $\lim_{(x, y) \to (z, z)} G(x, y) = \infty$ in the previous property. This statement about the limit towards the diagonal being infinity (along with the continuity of $\varphi(x, y)$ on all of $\Omega \times \Omega$), along with the fact that $G(x, y)$ is continuous off the diagonal show that $S$ is an open set. Then, Proposition \ref{ellipticmax} also shows that $S$ is a closed set, so since $\Omega \times \Omega$ is connected, we conclude that $S = \Omega \times \Omega$, which completes the proof. 

\end{proof}

\printbibliography
\end{document}